\newcommand\blam{{\boldsymbol\lambda}}
\newcommand\brho{{\boldsymbol\rho}}
\newcommand\bmu{{\boldsymbol\mu}}
\newcommand\bnu{{\boldsymbol\nu}}
\def\({\big(}
\def\){\big)}
\def\Z{\mathbb{Z}}
\def\N{\mathbb{N}}
\def\t{\mathfrak{t}}
\def\lam{\lambda}
\def\Lam{\Lambda}
\def\Sym{\mathfrak{S}}
\newcommand\RR{\mathscr{R}}
\newcommand\R[1][n]{\RR_{#1}^{\Lambda}}
\newcommand\HH{\mathscr{H}}
\DeclareMathOperator\Std{Tab}
\def\P{\mathscr{P}}
\def\wf{\widetilde{F}}
\newcommand\bi{\mathbf{i}}
\newcommand\maxim{\text{max}}
\newcommand\minum{\text{min}}
\DeclareMathOperator\End{End}
\DeclareMathOperator\rad{rad}
\DeclareMathOperator\cha{char}
\DeclareMathOperator\tr{tr}
\DeclareMathOperator\Tr{Tr}
\title[cyclotomic nilHecke algebras]
{On the structure of cyclotomic nilHecke algebras}
\subjclass[2010]{20C08, 16G99, 06B15}
\keywords{cyclotomic nilHecke algebras, graded cellular bases, trace forms}
\author{Jun Hu}\address{School of Mathematical and Statistics\\
  Beijing Institute of Technology\\
  Beijing, 100081, P.R. China}
\email{junhu404@bit.edu.cn}
\author{Xinfeng Liang}\address{School of Mathematical and Statistics\\
  Beijing Institute of Technology\\
  Beijing, 100081, P.R. China}
\email{lxfrd@163.com}
\numberwithin{equation}{section}
\newtheorem{prop}[equation]{Proposition}
\newtheorem{thm}[equation]{Theorem}
\newtheorem{cor}[equation]{Corollary}
\newtheorem{conj}[equation]{Conjecture}
\newtheorem{lem}[equation]{Lemma}
\newtheorem{ques}[equation]{Question}
\theoremstyle{definition}
\newtheorem{dfn}[equation]{Definition}
\theoremstyle{remark}
\newtheorem{rem}[equation]{Remark}
\begin{document}

\bibliographystyle{andrew}

\begin{abstract} In this paper we study the structure of the cyclotomic nilHecke algebras $\HH_{\ell,n}^{(0)}$, where $\ell,n\in\N$. We construct a monomial basis for $\HH_{\ell,n}^{(0)}$ which verifies a conjecture of Mathas. We show that the graded basic algebra of $\HH_{\ell,n}^{(0)}$ is commutative and hence isomorphic to the center $Z$ of $\HH_{\ell,n}^{(0)}$. We further prove that $\HH_{\ell,n}^{(0)}$ is isomorphic to the full matrix algebra over $Z$ and construct an explicit basis for the center $Z$. We also construct a complete set of pairwise orthogonal primitive idempotents of $\HH_{\ell,n}^{(0)}$. Finally, we present a new homogeneous symmetrizing form $\Tr$ on $\HH_{\ell,n}^{(0)}$ by explicitly specifying its values on a given homogeneous basis of $\HH_{\ell,n}^{(0)}$ and show that it coincides with Shan--Varagnolo--Vasserot's symmetrizing form $\Tr^{\text{SVV}}$ on $\HH_{\ell,n}^{(0)}$.
\end{abstract}

\maketitle
\setcounter{tocdepth}{1}

\section{Introduction}

Quiver Hecke algebras $\RR_\alpha$ and their finite dimensional quotients $\R[\alpha]$ (i.e., cyclotomic quiver Hecke algebras) have been hot topics in recent years. These algebra are remarkable because they can be used to categorify quantum groups and their integrable highest weight modules, see \cite{KK}, \cite{KhovLaud:diagI}, \cite{Rou0}, \cite{Rou1} and \cite{VaragnoloVasserot:CatAffineKLR}. These algebras can be regarded as some $\Z$-graded analogues of the affine Hecke algebras and their finite dimensional quotients. Many results concerning the representation theory of the affine Hecke algebras and the cyclotomic Hecke algebras of type $A$  have their $\Z$-graded analogues for the quiver Hecke algebras $\RR_\alpha$ and the cyclotomic quotients $\R[\alpha]$, see \cite{BK:GradedDecomp}, \cite{BKW:GradedSpecht} and \cite{LV}. It is natural to expect that the structure of the affine Hecke algebras and the cyclotomic Hecke algebras of type $A$  also have their $\Z$-graded analogues for the algebras $\RR_\alpha$ and $\R[\alpha]$. In fact, this is indeed the case for the quiver Hecke algebras $\RR_\alpha$. For example, we have faithful polynomial representations, standard basis and a nice description of the center for the algebra $\RR_\alpha$ in a similar way as in the case of the affine Hecke algebras of type $A$. However, the situation turns out to be much more tricky for the cyclotomic quiver Hecke algebras $\R[\alpha]$. Only partial progress have been made for the structure of the cyclotomic quiver Hecke algebras $\R[\alpha]$ so far. For example, \begin{enumerate}
\item The cyclotomic quiver Hecke algebra of type $A$ has a $\Z$-graded cellular basis by \cite{HuMathas:GradedCellular};
\item The cyclotomic quiver Hecke algebra is a $\Z$-graded symmetric algebra by \cite{SVV};
\item The center of the cyclotomic quiver Hecke algebra $\R[\alpha]$ is the image of the center of the quiver Hecke algebra $\RR_\alpha$ whenever the associated Cartan matrix is symmetric of finite type by \cite{Web}.
\end{enumerate}

Note that apart from the type $A$ case, one does not even know any explicit bases for arbitrary cyclotomic quiver Hecke algebras. On the other hand, for the classical cyclotomic Hecke algebra of type $A$, we have not only a Dipper-James-Mathas's cellular basis \cite{DJM:cyc} but also a monomial basis (or Ariki--Koike basis) by \cite{AK1}. But even for the cyclotomic quiver Hecke algebra of type $A$ we do not know any explicit monomial basis. This motivates our first question:

\begin{ques} \label{QA} Can we construct an explicit monomial basis for any cyclotomic quiver Hecke algebra?
\end{ques}

In \cite{SVV}, Shan, Varagnolo and Vasserot have shown that each cyclotomic quiver Hecke algebra can be endowed with a homogeneous symmetrizing form $\Tr^{\text{SVV}}$ which makes it into a graded symmetric algebra (see Remark \ref{FinalRem} and \cite[Section 6.3]{HuMathas:GradedCellular} for the type $A$ case). However,  the SVV symmetrizing form $\Tr^{\text{SVV}}$ is defined in an inductive manner. It is difficult to compute the explicit value of the form $\Tr^{\text{SVV}}$ on any specified homogeneous element. On the other hand, it is well-known that the classical cyclotomic Hecke algebra of type $A$ is symmetric (\cite{MM}, \cite{BK:HigherSchurWeyl}) and the definition of its symmetrizing form is explicit in that it specifies its value on each monomial basis element. This motivates our second question:

\begin{ques} \label{QB} Can we determine the explicit values of the Shan--Varagnolo--Vasserot symmetrizing form $\Tr^{\text{SVV}}$ on some monomial bases (or at least a set of $K$-linear generators) of the cyclotomic quiver Hecke algebra?
\end{ques}

Note that explicit basis for the center of $\R[\alpha]$ is unknown. Even for the classical cyclotomic Hecke algebra of type $A$, except in the level one case (\cite{GP}) or in the degenerate case (\cite{Brundan:degenCentre}), one does not know any explicit basis for the center as well.

\begin{ques} \label{QC} Can we give an explicit basis for the center of the cyclotomic quiver Hecke algebra?
\end{ques}

The starting points of this paper is to try to answer the above three questions. As a first step toward this goal, we need to consider the case of the cyclotomic quiver Hecke algebra which corresponds to a quiver with a single vertex and no edges. That is, the cyclotomic nilHecke algebra of type $A$. Let us recall its definition.

\begin{dfn} \label{NilHecke} Let $\ell,n\in\N$. The nilHecke algebra $\HH_{n}^{(0)}$ of type $A$ is the unital associative $K$-algebra generated by $\psi_1,\cdots,\psi_{n-1},y_1,\cdots,y_n$
which satisfy the following relations: $$\begin{aligned}
& \psi_r^2=0,\quad \forall\,1\leq r<n,\\
& \psi_r\psi_k=\psi_k\psi_r,\quad\forall\, 1\leq k<r-1<n-1,\\
& \psi_r\psi_{r+1}\psi_r=\psi_{r+1}\psi_r\psi_{r+1},\quad\forall\,1\leq r<n-1,\\
& y_r y_k=y_k y_r,\quad\forall\,1\leq r,k\leq n,\\
& \psi_r y_{r+1}=y_r\psi_r+1,\quad y_{r+1}\psi_r=\psi_r y_r+1,\quad\forall\,1\leq r<n .
\end{aligned}
$$
The cyclotomic nilHecke algebra $\HH_{\ell,n}^{(0)}$ of type $A$ is the quotient of $\HH_{n}^{(0)}$ by the two-sided ideal generated by $y_1^\ell$.
\end{dfn}

The nilHecke algebras $\HH_{n}^{(0)}$ was introduced by Kostant and Kumar \cite{KoK}. It plays an important role in the theory of Schubert calculus, see \cite{Hi}. Mathas has observed that the Specht module over $\HH_{n,n}^{(0)}$ can be realized as the coinvariant algebra with standard bases of Specht modules
being identified with the Schubert polynomials of the coinvariant algebras (\cite[Section 2.5]{Mathas:Singapore}). It is clear that both $\HH_{n}^{(0)}$ and $\HH_{\ell,n}^{(0)}$ are $\Z$-graded $K$-algebras such that each $\psi_r$ is homogeneous with $\deg\psi_r=-2$ and each $y_s$ is homogeneous with $\deg y_s=2$ for all $1\leq r<n, 1\leq s\leq n$.
In \cite[Section 2.5]{Mathas:Singapore}, Mathas has conjectured a monomial basis of the cyclotomic nilHecke algebra $\HH_{n,n}^{(0)}$. In this paper, we shall construct a monomial basis of the cyclotomic nilHecke algebra $\HH_{\ell,n}^{(0)}$ for arbitrary $\ell$ (Theorem \ref{mainthm1}) and in particular verifies Mathas's conjecture. As an application, we shall construct a basis for the center $Z$ of $\HH_{\ell,n}^{(0)}$ (Theorem \ref{mainthm2}). Thus we shall answer Question \ref{QA} and Question \ref{QC} for the cyclotomic nilHecke algebra $\HH_{\ell,n}^{(0)}$. Furthermore, we shall construct a new homogeneous symmetrizing form $\Tr$ (Proposition \ref{keyprop1}) by specifying its values on a homogeneous basis element of $\HH_{\ell,n}^{(0)}$. We prove that this new form $\Tr$ actually coincides with Shan--Varagnolo--Vasserot's symmetrizing form $\Tr^{\text{SVV}}$ on $\HH_{\ell,n}^{(0)}$ introduced in \cite{SVV}. Thus we also answer  Question \ref{QB} for the cyclotomic nilHecke algebra $\HH_{\ell,n}^{(0)}$.

The content of the paper is organised as follows. In Section 2, we shall first review some basic knowledge about the structure and representation of $\HH_{\ell,n}^{(0)}$. Lemma \ref{1relation} provides a useful commutator relation which will be used frequently in later discussion. In Corollary \ref{gradedcharacters} and \ref{gradedCartan} we determine the graded dimensions of the graded simple modules and their graded projective covers as well as the graded decomposition numbers and the graded Cartan numbers. We construct a monomial basis of the cyclotomic nilHecke algebra $\HH_{\ell,n}^{(0)}$ for arbitrary $\ell$ in Theorem \ref{mainthm1}. We also construct a complete set of pairwise orthogonal primitive idempotents in Corollary \ref{matrixUnit1} and Theorem \ref{mainthm0}. In Section 3, we shall first present a basis for the graded basic algebra of $\HH_{\ell,n}^{(0)}$ and show that it is isomorphic to the center $Z$ of $\HH_{\ell,n}^{(0)}$ in Lemma \ref{dimCenter}. Then we shall give a basis for the center in Theorem \ref{mainthm2} which consists of certain symmetric polynomials in $y_1,\cdots,y_n$. We also show in Proposition \ref{matrixIso} that $\HH_{\ell,n}^{(0)}$ is isomorphic to the full matrix algebra over $Z$. In Section 4, we shall first show in Lemma \ref{Zsymmetric} that the center $Z$ is a graded symmetric algebra by specifying an explicit homogeneous symmetrizing form on $Z$. Then we shall introduce two homogeneous symmetrizing forms: one is defined by using its isomorphism with the full matrix algebra over the center $Z$ (Lemma \ref{1stForm}); another is defined by specifying its values on a homogeneous basis element (Definition \ref{DefTr} and Proposition \ref{keyprop1}). We show in Proposition \ref{Comparing2Traces} that these two symmetrizing forms are the same. In Section 5 we show that the form $\Tr$ also coincides with Shan--Varagnolo--Vasserot's symmetrizing form $\Tr^{\text{SVV}}$ (which was introduced in \cite{SVV} for general cyclotomic quiver Hecke algebras).\medskip

After the submission of this paper, Professor Lauda emailed us that he wonders if our results have some connections with his papers \cite{KLMS} and \cite{Lau2}. In \cite{Lau2} he proved that the cyclotomic nilHecke algebra is isomorphic to the matrix ring of size $n!$ over the cohomology of a Grassmannian.
Combining it with  Proposition \ref{matrixIso} in this paper this implies that the center of the cyclotomic nilHecke algebra is isomorphic to that cohomology of a Grassmannian. He also proposed an interesting question of comparing the trace form $\Tr$ in this paper with the natural form on the matrix ring over the cohomology of the Grassmannian which can be defined using integration over the volume form.

\bigskip

\section*{Acknowledgements}

Both author were supported by the National Natural Science Foundation of China (No. 11525102, 11471315). They thank Prof. Lauda and Dr. Kai Zhou for some  helpful discussions and comments.
\bigskip

\bigskip
\section{The structure and representation of $\HH_{\ell,n}^{(0)}$}

Let $\Sym_n$ be the symmetric group of on $\{1,2,\cdots,n\}$ and let $s_i:=(i,i+1)\in\Sym_n$, for $1\leq i<n$. Then $\{s_1,\cdots,s_{n-1}\}$ is the standard set of Coxeter generators for $\Sym_n$. If $w\in\Sym_n$ then the length of $w$ is $$
\ell(w):=\min\{k\in\N|\text{$w=s_{i_1}\cdots s_{i_k}$ for some $1\leq i_1,\cdots,i_k<n$}\}.
$$
If $w=s_{i_1}\cdots s_{i_k}$ with $k=\ell(w)$ then $s_{i_1}\cdots s_{i_k}$ is a reduced expression for $w$. In this case, we define $\psi_w:=\psi_{i_1}\cdots\psi_{i_k}$. The braid relation in Definition \ref{NilHecke} ensures that $\psi_w$ does not depend on the choice of the reduced expression of $w$. Let $w_{0,n}$ be the unique longest element in $\Sym_n$. When $n$ is clear from the context we shall write $w_0$ instead of $w_{0,n}$ for simplicity. Then $w_0=w_0^{-1}$ and $\ell(w_0)=n(n-1)/2$. Let $\ast$ be the unique $K$-algebra anti-automorphism of $\HH_{\ell,n}^{(0)}$ which fixes each of its $\psi$ and $y$ generators.

\begin{lem}\text{(\cite{Man})} \label{nilHeckeCenter} The elements in the set $\{\psi_wy_1^{c_1}\cdots y_n^{c_n}|w\in\Sym_n, c_1,\cdots,c_n\in\N\}$ form a $K$-basis of the nilHecke algebra $\HH_{n}^{(0)}$  and the center of  $\HH_{n}^{(0)}$ is the set of symmetric polynomials in $y_1,\cdots,y_n$.
\end{lem}

Let $\pi: \HH_{n}^{(0)}\twoheadrightarrow\HH_{\ell,n}^{(0)}$ be the canonical surjective homomorphism.

\begin{dfn} An element $z$ in $\HH_{\ell,n}^{(0)}$ is said to be symmetric if $z=\pi(f(y_1,\cdots,y_n))$ for some symmetric polynomial $f(t_1,\cdots,t_n)\in K[t_1,\cdots,t_n]$, where $t_1,\cdots,t_n$ are $n$ indeterminates over $K$.
\end{dfn}

\begin{cor} \label{corcenter1} Any symmetric element in $\HH_{\ell,n}^{(0)}$ lies in the center of $\HH_{\ell,n}^{(0)}$.
\end{cor}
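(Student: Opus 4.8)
The plan is to reduce the statement to the known description of the center of the (affine) nilHecke algebra $\HH_n^{(0)}$ given in Lemma \ref{nilHeckeCenter}, and then push that information through the canonical surjection $\pi$. First I would let $z = \pi(f(y_1,\dots,y_n))$ for some symmetric polynomial $f(t_1,\dots,t_n) \in K[t_1,\dots,t_n]$, as in the definition of a symmetric element. By Lemma \ref{nilHeckeCenter}, the element $f(y_1,\dots,y_n)$ lies in the center of $\HH_n^{(0)}$; in particular it commutes with every $\psi_r$ and every $y_s$ in $\HH_n^{(0)}$. Since $\pi$ is a surjective $K$-algebra homomorphism, it carries central elements to elements that commute with everything in the image, i.e. with all of $\HH_{\ell,n}^{(0)}$. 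Concretely, for any generator $g \in \{\psi_1,\dots,\psi_{n-1},y_1,\dots,y_n\}$ of $\HH_{\ell,n}^{(0)}$, write $g = \pi(\tilde g)$ for the corresponding generator $\tilde g$ of $\HH_n^{(0)}$; then
\[
z g = \pi\big(f(y_1,\dots,y_n)\big)\pi(\tilde g) = \pi\big(f(y_1,\dots,y_n)\,\tilde g\big) = \pi\big(\tilde g\, f(y_1,\dots,y_n)\big) = g z,
\]
using centrality of $f(y_1,\dots,y_n)$ in $\HH_n^{(0)}$ in the middle equality. Since $z$ commutes with a generating set of $\HH_{\ell,n}^{(0)}$, it commutes with all of $\HH_{\ell,n}^{(0)}$, hence $z \in Z(\HH_{\ell,n}^{(0)})$.

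I do not anticipate any serious obstacle here: the only content beyond formal nonsense about quotient algebras is the input Lemma \ref{nilHeckeCenter}, which is already available, together with the elementary fact that a ring homomorphism maps the center into the centralizer of the image. The one point worth stating carefully is that because $\pi$ is surjective, the centralizer of the image of a central element is the whole quotient, so that "symmetric element" genuinely lands in the center rather than merely in some proper centralizing subalgebra. This is exactly what makes the argument go through cleanly, and it is why the corollary is an immediate consequence of the lemma rather than requiring any analysis of the cyclotomic ideal generated by $y_1^\ell$.
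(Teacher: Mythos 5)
Your argument is correct and is exactly the paper's proof, just spelled out in more detail: Lemma \ref{nilHeckeCenter} identifies symmetric polynomials in the $y_i$ as central in $\HH_n^{(0)}$, and surjectivity of $\pi$ transports centrality to the quotient $\HH_{\ell,n}^{(0)}$. Nothing further is needed.
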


\begin{proof} This follows from \ref{nilHeckeCenter} and the surjective homomorphism $\pi$.
\end{proof}

Let $\Gamma$ be a quiver without loops and $I$ its vertex set. For any $i,j\in I$ let $d_{ij}$ be the number of arrow $i\rightarrow j$ and set $m_{ij}:=d_{ij}+d_{ji}$. This defines a {\it symmetric} generalized Cartan matrix $(a_{ij})_{i,j\in I}$ by putting $a_{ij}:=-m_{ij}$ for $i\neq j$ and
$a_{ii}:=2$ for any $i\in I$. Let $u, v$ be two indeterminates over $\Z$. We define $Q_{ij}:=(-1)^{d_{ij}}(u-v)^{m_{ij}}$ for any $i\neq j\in I$ and $Q_{ii}(u,v):=0$ for any $i\in I$. Let $(\mathfrak{h},\Pi,\Pi^{\vee})$ be a realization of the generalized Cartan matrix $(a_{ij})_{i,j\in I}$. Let $P$ be the associated weight lattice which is a finite rank free abelian group and contains $\Pi=\{\alpha_i|i\in I\}$, let $P^{\vee}$ be the associated co-weight lattice which is a finite rank free abelian group too and contains $\Pi^{\vee}=\{\alpha_i^{\vee}|i\in I\}$. Let $Q^{+}:=\mathbb{N}\Pi\subset P$ be the semigroup generated by $\Pi$ and $P^{+}\subset P$ be the set of integral dominant weights. Let $\Lam\in P^+$ and $\beta\in Q_n^+$. One can associate it with a quiver Hecke algebra $\RR_{\beta}$ as well as its cyclotomic quotient $\R[\beta]$, we refer the readers to \cite{KhovLaud:diagI}, \cite{Rou0} and \cite{SVV} for precise definitions.

Let $\{\Lambda_i|i\in I\}$ be the set of fundamental weights. The nilHecke algebra and its cyclotomic quotient can be regarded as a special quiver Hecke algebra and cyclotomic quiver Hecke algebra. That is, the quiver with single one vertex $\{0\}$ and no edges. More precisely, we have that \begin{equation}\label{Identify2}
\HH_{n}^{(0)}=\RR_{n\alpha_0},\quad \HH_{\ell,n}^{(0)}=\RR^{\ell\Lam_0}_{n\alpha_0} .
\end{equation}

Throughout this paper, unless otherwise stated, we shall work in the category of $\Z$-graded $\HH_{\ell,n}^{(0)}$-modules. Note that $\HH_{\ell,n}^{(0)}$ is a special type $A$ cyclotomic quiver Hecke algebra so that we can apply the theory of graded cellular algebras developed in
\cite{HuMathas:GradedCellular}. We now recall the definition of graded cellular basis in this special situation (i.e., for $\HH_{\ell,n}^{(0)}$).

We use $\emptyset$ to denote the empty partition and $(1)$ to denote the unique partition of $1$. Set $|\emptyset|:=0$, $|(1)|:=1$. We define $$
\P_0:=\bigl\{\blam:=(\lam^{(1)},\cdots,\lam^{(\ell)})\bigm|\sum_{i=1}^{\ell}|\lam^{(i)}|=n, \lam^{(i)}\in\{\emptyset,(1)\},\,\forall\,1\leq i\leq\ell\bigr\} .
$$

\begin{dfn} If $\blam=(\lam^{(1)},\cdots,\lam^{(\ell)})\in\P_0$, then we define $\theta(\blam)$ to be the unique $n$-tuple $(k_1,\cdots,k_n)$ such that
$1\leq k_1<k_2<\cdots<k_n\leq\ell$ and
$$
\lam^{(j)}=\begin{cases} (1), &\text{if $j=k_i$ for some $1\leq i\leq n$,}\\
\emptyset, &\text{otherwise.}
\end{cases}
$$
\end{dfn}
Given any two $n$-tuples $(k_1,\cdots,k_n), (k'_1,\cdots,k'_n)$ of increasing positive integers, we define $$
(k_1,\cdots,k_n)\geq (k'_1,\cdots,k'_n)\Leftrightarrow k_i\geq k'_i,\,\forall\,1\leq i\leq n ,
$$
and $(k_1,\cdots,k_n)>(k'_1,\cdots,k'_n)$ if $(k_1,\cdots,k_n)\geq (k'_1,\cdots,k'_n)$ and $(k_1,\cdots,k_n)\neq (k'_1,\cdots,k'_n)$. For any $\blam,\bmu\in\P_0$, we define $$
\blam>\bmu \Leftrightarrow \theta(\blam)<\theta(\bmu) .
$$
Then ``$>$" is a partial order on $\P_0$. The following definition is a special case of the definition of \cite[Definition 4.15]{HuMathas:GradedCellular}.

\begin{dfn}\text{(\cite[Definition 4.15]{HuMathas:GradedCellular})} Let $\blam\in\P_0$ with $\theta(\blam)=(k_1,\cdots,k_n)$. We define $$
y_\blam:=y_1^{\ell-k_1}\cdots y_n^{\ell-k_n},\quad \deg y_\blam:=2\ell n-2\sum_{i=1}^{n}k_i.
$$
\end{dfn}
By the main results in \cite{HuMathas:GradedCellular}, the elements in the following set \begin{equation}\label{HMcellular}
\bigl\{\psi_{w,u}^{\blam}:=\psi_{w}^\ast y_\blam\psi_u\bigm|\blam\in\P_0, w,u\in\Sym_n\bigr\}
\end{equation}
form a graded cellular $K$-basis of $\HH_{\ell,n}^{(0)}$. Each basis element $\psi_{w,u}^{\blam}$ is homogeneous with degree equal to $$
\deg\psi_{w,u}^{\blam}:=\deg y_\blam-2\ell(w)-2\ell(u)=2\ell n-2\sum_{i=1}^{n}k_i-2\ell(w)-2\ell(u) .
$$

In particular, $\dim_K\HH_{\ell,n}^{(0)}=\ell(\ell-1)\cdots(\ell-n+1) n!$. Note that $\P_0\neq\emptyset$ if and only if $\ell\geq n$. Therefore, $\HH_{\ell,n}^{(0)}=0$ whenever $\ell<n$. Henceforth, we always assume that $\ell\geq n$.

By the general theory of (graded) cellular algebras (\cite{GL}, \cite{HuMathas:GradedCellular}), for each $\blam\in\P_0$, we have a graded Specht module
$S^\blam$, which was equipped with an associative homogeneous bilinear form $\<-,-\>_\blam$. Let $\rad\<-,-\>_\blam$ be the radical of that bilinear form. We define
$D^\blam:=S^\blam/\rad\<-,-\>_\blam$. By \cite[Corollary 5.11]{HuMathas:GradedCellular}, we know that $D^\blam\neq 0$ if and only if $\blam$ is a Kleshchev multipartition with respect to $(p;0,0,\cdots,0)$, where $p=\cha K$.

Let $\blam\in\P_0$ with $\theta(\blam)=(k_1,\cdots,k_n)$. An $\blam$-tableau is a bijection $\t: \{k_1,\cdots,k_n\}\rightarrow\{1,2,\cdots,n\}$. We use $\Std(\blam)$ to denote the set of $\blam$-tableaux. For any $\t\in\Std(\blam)$, we define $$\begin{aligned}
\deg\t&:=\sum_{i=1}^{n}\Bigl(\#\bigl\{k_i<j\leq\ell\bigm|\text{either $j\not\in\{k_1,\cdots,k_n\}$ or $j=k_b$ with $\t(j)>\t(k_i)$}\bigr\}-\\
&\qquad\qquad \#\bigl\{k_i<j\leq\ell\bigm|\text{$j\in\{k_1,\cdots,k_n\}$ and $\t(j)<\t(k_i)$}\bigr\}\Bigr).
\end{aligned}$$
It is clear that in our special case (i.e., for $\P_0$) the above definition of $\deg\t$ coincides with that in \cite{BKW:GradedSpecht} and \cite{HuMathas:GradedCellular}.

\begin{dfn} \label{twoExtremel} We define $$
\blam_{\maxim}:=\bigl(\underbrace{(1),\cdots,(1)}_{\text{$n$ copies}},\underbrace{\emptyset,\cdots,\emptyset}_{\text{$\ell-n$ copies}}\bigr),
\quad\blam_{\minum}:=\bigl(\underbrace{\emptyset,\cdots,\emptyset}_{\text{$\ell-n$ copies}},\underbrace{(1),\cdots,(1)}_{\text{$n$ copies}}\bigr) .
$$
\end{dfn}

It is clear that for any $\bmu\in\P_0\setminus\{\blam_{\maxim},\blam_{\minum}\}$, we have that \begin{equation}\label{maxminum}
\blam_{\minum}<\bmu<\blam_{\maxim},\quad \deg y_{\blam_{\minum}}<\deg y_{\bmu}<\deg y_{\blam_{\maxim}} .
\end{equation}
Using \cite{BK:GradedKL} and the definition of Kleshchev multipartition in \cite{AM}, it is clear that $\blam_{\minum}$
is the unique Kleshchev multipartition in $\P_0$. Therefore, for any $\blam\in\P_0$, $D^\blam\neq 0$ if and only if $\blam=\blam_{\minum}$. Furthermore, $D^{\blam_{\minum}}$ is the unique (self-dual) graded simple module for $\HH_{\ell,n}^{(0)}$. Let $P^{\blam_{\minum}}$ be its graded projective cover.

\begin{dfn} We define $$
D_0:=D^{\blam_{\minum}},\quad P_0:=P^{\blam_{\minum}} .
$$
\end{dfn}

For each $\bmu\in\P_0$, we use $(\HH_{\ell,n}^{(0)})^{>\bmu}$ to denote the $K$-subspace of $\HH_{\ell,n}^{(0)}$ spanned by all the elements of the form
$\psi_w^{\ast}y_\blam\psi_{u}$, where $\blam>\bmu$, $w,u\in\Sym_n$. Then $(\HH_{\ell,n}^{(0)})^{>\bmu}$ is a two-sided ideal of $\HH_{\ell,n}^{(0)}$. By \cite[Corollary 3.11]{HuMathas:GradedInduction}, for any $1\leq r\leq n$, if $\theta(\bmu)=(k_1,\cdots,k_n)$ then \begin{equation}\label{dominanceBigger}
y_\bmu y_r=y_1^{\ell-k_1}\cdots y_n^{\ell-k_n}y_r\in (\HH_{\ell,n}^{(0)})^{>\bmu} .
\end{equation}

\begin{lem} \label{1relation} For any $1\leq i\leq n, 1\leq j<n$, there exists elements $h_{i,j},h'_{i,j}\in\HH_{\ell,n}^{(0)}$ such that \begin{equation}\label{commuRel1}
\psi_{w_0}y_1^{n-1}y_2^{n-2}\cdots y_{n-1}=(-1)^{n(n-1)/2}+\sum_{\substack{1\leq i\leq n\\ 1\leq j<n}}y_ih_{i,j}\psi_{j}.
\end{equation}
Similarly, we have that \begin{equation}\label{commuRel2}
y_1^{n-1}y_2^{n-2}\cdots y_{n-1}\psi_{w_0}=(-1)^{n(n-1)/2}+\sum_{\substack{1\leq i\leq n\\ 1\leq j<n}}\psi_{j}h_{i,j}^{\ast}y_i .
\end{equation}
\end{lem}

\begin{proof} We only prove the first equality as the second one follows from the first one by applying the anti-involution $\ast$. We use induction on $n$. If $n=1$, it is clear that (\ref{commuRel1}) holds. Suppose that the lemma  holds for the nilHecke algebra $\HH_{\ell,n-1}^{(0)}$. We are going to prove (\ref{commuRel1}) for $\HH_{\ell,n}^{(0)}$.

Recall that the unique longest element $w_0:=w_{0,n}$ of $\Sym_n$ has a reduced expression $$
w_0=s_1(s_2s_1)\cdots (s_{n-2}s_{n-3}\cdots s_1)(s_{n-1}s_{n-2}\cdots s_1).
$$
Recall that $w_{0,n-1}$ denotes the unique longest element in $\Sym_{n-1}$ and $w_0=w_{0,n-1}(s_{n-1}s_{n-2}\cdots s_1)$ and $s_1(s_2s_1)\cdots (s_{n-2}s_{n-3}\cdots s_1)$ is a reduce expression for $w_{0,n-1}$.

We define $$
J_n:=\sum_{i=1}^{n}y_i\HH_{\ell,n}^{(0)} .
$$
Then we have that $$\begin{aligned}
&\quad\,\psi_{w_0}y_1^{n-1}y_2^{n-2}\cdots y_{n-1}\\
&=\psi_{w_0}(y_1y_2\cdots y_{n-1})y_1^{n-2}y_2^{n-3}\cdots y_{n-2}\\
&=\psi_{w_{0,n-1}}\bigl(\psi_{n-1}\psi_{n-2}\cdots \psi_1y_1y_2\cdots y_{n-1}\bigr)y_1^{n-2}y_2^{n-3}\cdots y_{n-2}\\
&=\psi_{w_{0,n-1}}\bigl(\psi_{n-1}y_1y_2\cdots y_{n-1}\psi_{n-2}\cdots \psi_1\bigr)y_1^{n-2}y_2^{n-3}\cdots y_{n-2},\qquad\qquad\qquad\quad\text{(By Corollary \ref{corcenter1})}\\
&=\psi_{w_{0,n-1}}\bigl(y_1y_2\cdots y_{n-2}\psi_{n-1}y_{n-1}\psi_{n-2}\cdots \psi_1\bigr)y_1^{n-2}y_2^{n-3}\cdots y_{n-2}\\
&=\psi_{w_{0,n-1}}\Bigl(y_1y_2\cdots y_{n-2}\bigl(y_n\psi_{n-1}-1\bigr)\psi_{n-2}\cdots \psi_1\Bigr)y_1^{n-2}y_2^{n-3}\cdots y_{n-2}\\
&\equiv -\psi_{w_{0,n-1}}\Bigl(y_1y_2\cdots y_{n-2}\psi_{n-2}\cdots \psi_1\Bigr)y_1^{n-2}y_2^{n-3}\cdots y_{n-2}\pmod{J_n}\qquad\qquad\text{(By (\ref{dominanceBigger}))}\\
&\equiv -\psi_{w_{0,n-2}}\Bigl(\psi_{n-2}\psi_{n-3}\cdots\psi_1y_1y_2\cdots y_{n-2}\Bigr)(\psi_{n-2}\cdots \psi_1)y_1^{n-2}y_2^{n-3}\cdots y_{n-2}\pmod{J_n}\\
&\equiv -\psi_{w_{0,n-2}}\Bigl(\psi_{n-2}y_1y_2\cdots y_{n-2}\psi_{n-3}\cdots\psi_1\Bigr)(\psi_{n-2}\cdots \psi_1)y_1^{n-2}y_2^{n-3}\cdots y_{n-2}\pmod{J_n}\\
&\equiv -\psi_{w_{0,n-2}}\Bigl(y_1y_2\cdots y_{n-3}(\psi_{n-2}y_{n-2})\psi_{n-3}\cdots\psi_1\Bigr)(\psi_{n-2}\cdots \psi_1)y_1^{n-2}y_2^{n-3}\cdots y_{n-2}\pmod{J_n}\\
&\equiv -\psi_{w_{0,n-2}}\Bigl(y_1y_2\cdots y_{n-3}(y_{n-1}\psi_{n-2}-1)\psi_{n-3}\cdots\psi_1\Bigr)(\psi_{n-2}\cdots \psi_1)y_1^{n-2}y_2^{n-3}\cdots y_{n-2}\pmod{J_n}\\
&\equiv (-1)^2\psi_{w_{0,n-2}}\Bigl(y_1y_2\cdots y_{n-3}\psi_{n-3}\cdots\psi_1\Bigr)(\psi_{n-2}\cdots \psi_1)y_1^{n-2}y_2^{n-3}\cdots y_{n-2}\pmod{J_n}\\
&\equiv (-1)^2\psi_{w_{0,n-2}}\bigl(y_1y_2\cdots y_{n-3}\bigr)\bigl((\psi_{n-3}\cdots\psi_1)(\psi_{n-2}\cdots \psi_1)\bigr)y_1^{n-2}y_2^{n-3}\cdots y_{n-2}\pmod{J_n}\\
&\equiv (-1)^2\psi_{w_{0,n-3}}\bigl(\psi_{n-3}\psi_{n-4}\cdots\psi_1y_1y_2\cdots y_{n-3}\bigr)\Bigl((\psi_{n-3}\cdots\psi_1)(\psi_{n-2}\cdots \psi_1)(y_1^{n-2}y_2^{n-3}\cdots y_{n-2})\Bigr)\pmod{J_n}\\
&\quad\,\vdots\\
&\equiv (-1)^{n-1}\Bigl(\psi_1(\psi_2\psi_1)\cdots(\psi_{n-3}\cdots\psi_1)(\psi_{n-2}\cdots \psi_1)(y_1^{n-2}y_2^{n-3}\cdots y_{n-2})\Bigr)\pmod{J_n}\\
&\equiv (-1)^{n-1}\psi_{w_{0,n-1}}(y_1^{n-2}y_2^{n-3}\cdots y_{n-2})\pmod{J_n}\\
&\equiv (-1)^{n-1}(-1)^{(n-1)(n-2)/2}\equiv (-1)^{n(n-1)/2}\pmod{J_n},
\end{aligned}
$$
as required, where we use induction in the second last equality.

Therefore, we have proved that $$
\psi_{w_0}y_1^{n-1}y_2^{n-2}\cdots y_{n-1}=(-1)^{n(n-1)/2}+\sum_{\substack{1\leq i\leq n\\ 1\leq j<n}}y_ih_i,
$$
where $h_i\in\HH_{\ell,n}^{(0)}$. Comparing the degree on both sides, we can assume that each $h_i$ is homogeneous with $h_i\neq 0$ only if $\deg(h_i)=-2<0$.
On the other hand, we can express each nonzero $h_i$ as a $K$-linear combination of some monomials of the form $y_1^{c_1}\cdots y_n^{c_n}\psi_w$, where
$c_1,\cdots,c_n\in\N, w\in\Sym_n$. Since each $y_j$ has degree $2$, we can thus deduce that each nonzero $h_i$ must be equal to a $K$-linear combination of some monomials of the form $y_1^{c_1}\cdots y_n^{c_n}\psi_w$ with $c_1,\cdots,c_n\in\N$ and $1\neq w\in\Sym_n$. This completes the proof of the lemma.
\end{proof}

\begin{lem}\label{2usefulProperties} 1) For any $u,w\in\Sym_n$, if $\ell(u)+\ell(w)>\ell(uw)$, then $\psi_u\psi_w=0$;

2) For any $1\leq r<n$, $\psi_r\psi_{w_0}=0=\psi_{w_0}\psi_r$.
\end{lem}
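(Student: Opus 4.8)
The plan is to prove both statements by elementary length arguments inside the nilHecke algebra, using only the quadratic relation $\psi_r^2=0$, the commutation relations among the $\psi$'s, and the braid relations. For part 1), I would argue by induction on $\ell(w)$. If $\ell(u)+\ell(w)>\ell(uw)$, then by the exchange condition there is a simple reflection $s_r$ with $\ell(ws_r)<\ell(w)$ and $\ell(us_r)>\ell(u)$ fails in the naive direction; more precisely, the standard fact is that one can write $w=s_r w'$ with $\ell(w')=\ell(w)-1$ and $\ell(us_r)\le\ell(u)$, equivalently $\ell(us_r)=\ell(u)-1$. Then I would fix reduced expressions: take a reduced word for $u$ ending in $s_r$, say $u=u's_r$ with $\ell(u')=\ell(u)-1$, and a reduced word for $w$ beginning with $s_r$, say $w=s_rw'$ with $\ell(w')=\ell(w)-1$. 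Since $\psi_u=\psi_{u'}\psi_r$ and $\psi_w=\psi_r\psi_{w'}$ are well-defined independent of the chosen reduced words, we get $\psi_u\psi_w=\psi_{u'}\psi_r^2\psi_{w'}=0$. So the only real content is the combinatorial lemma that $\ell(u)+\ell(w)>\ell(uw)$ forces the existence of such an $s_r$ — this is a classical property of Coxeter groups (if no reduced word of $u$ ends where a reduced word of $w$ starts with the same generator, then concatenation is reduced and $\ell(uw)=\ell(u)+\ell(w)$).

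For part 2), both equalities are immediate special cases of part 1). For $\psi_r\psi_{w_0}=0$: since $w_0$ is the longest element, $\ell(s_rw_0)=\ell(w_0)-1<\ell(w_0)=\ell(s_r)+\ell(w_0)-1$, so with $u=s_r$, $w=w_0$ we have $\ell(u)+\ell(w)=1+\ell(w_0)>\ell(w_0)-1=\ell(uw)$, hence $\psi_r\psi_{w_0}=0$. Symmetrically $\ell(w_0s_r)=\ell(w_0)-1$ gives $\psi_{w_0}\psi_r=0$ by taking $u=w_0$, $w=s_r$. Alternatively, and perhaps more cleanly, one simply notes that every reduced expression for $w_0$ must begin (resp. end) with every simple reflection $s_r$ — more precisely, for each $r$ there is a reduced expression for $w_0$ starting with $s_r$ and one ending with $s_r$ — because $\ell(s_rw_0)<\ell(w_0)$ and $\ell(w_0s_r)<\ell(w_0)$ for all $r$; then $\psi_{w_0}=\psi_r\psi_{v}$ for suitable $v$ with $\ell(v)=\ell(w_0)-1$, so $\psi_r\psi_{w_0}=\psi_r^2\psi_v=0$, and symmetrically on the other side.

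The main obstacle, such as it is, is purely bookkeeping: making sure that the chosen reduced expressions are compatible so that the factorizations $\psi_u=\psi_{u'}\psi_r$ and $\psi_w=\psi_r\psi_{w'}$ genuinely hold as elements of $\HH_{\ell,n}^{(0)}$, which is guaranteed by the already-noted fact that $\psi_v$ is independent of the reduced expression. I would state the needed Coxeter-theoretic input explicitly (the ``deletion/exchange'' consequence that $\ell(uw)<\ell(u)+\ell(w)$ implies some common $s_r$ with $\ell(us_r)<\ell(u)$ and $\ell(s_rw)<\ell(w)$) and cite it or prove it in one line via the subword property, and then the algebraic conclusion is a single application of $\psi_r^2=0$. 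No genuinely hard step is expected here; this lemma is a standard preliminary used repeatedly in what follows.
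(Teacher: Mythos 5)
Your treatment of part 2) is correct and matches the paper's own one-line argument (that $w_0$ has, for every $r$, a reduced expression beginning with $s_r$ and one ending with $s_r$). However, your proposed proof of part 1) contains a genuine gap. The ``standard fact'' you invoke --- that $\ell(uw)<\ell(u)+\ell(w)$ forces the existence of a single simple reflection $s_r$ which is simultaneously a right descent of $u$ and a left descent of $w$, so that $u=u's_r$ and $w=s_rw'$ are both reduced factorizations --- is \emph{false}. Take $u=w=s_1s_2$ in $\Sym_3$: here $\ell(u)+\ell(w)=4>2=\ell(uw)$, yet the unique right descent of $u$ is $s_2$ while the unique left descent of $w$ is $s_1$, so no common $s_r$ exists. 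In that example $\psi_u\psi_w=\psi_1\psi_2\psi_1\psi_2$, and one must first apply the braid relation $\psi_1\psi_2\psi_1=\psi_2\psi_1\psi_2$ before the $\psi_r^2=0$ cancellation becomes visible: $\psi_1\psi_2\psi_1\psi_2=\psi_2\psi_1\psi_2^2=0$. So the quadratic relation alone, applied immediately at the interface of two reduced words, does not suffice.

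The fix is to actually carry out the induction on $\ell(w)$ that you announce but do not execute. Pick any $s_r$ with $\ell(s_rw)<\ell(w)$ and write $w=s_rw'$ reduced. If $\ell(us_r)<\ell(u)$, write $u=u's_r$ reduced and conclude $\psi_u\psi_w=\psi_{u'}\psi_r^2\psi_{w'}=0$. Otherwise $\ell(us_r)=\ell(u)+1$, so $\psi_u\psi_r=\psi_{us_r}$ and $\psi_u\psi_w=\psi_{us_r}\psi_{w'}$; since $\ell(us_r)+\ell(w')=\ell(u)+\ell(w)>\ell(uw)=\ell(us_rw')$ and $\ell(w')<\ell(w)$, the inductive hypothesis applies and finishes the proof. (Alternatively, invoke Tits' word theorem in full strength: the concatenated word $s_{i_1}\cdots s_{i_p}s_{j_1}\cdots s_{j_q}$ is not reduced, hence can be transformed into a reduced word by braid moves together with deletions $s_rs_r\mapsto 1$; the braid moves leave the $\psi$-product unchanged, and at the first deletion the product exhibits a factor $\psi_r^2=0$.) For comparison, the paper's own proof of this lemma is deliberately terse and simply attributes part 1) to the defining relations; you are right to want more detail, but the particular Coxeter-theoretic shortcut you reached for is not available.
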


\begin{proof} 1) follows from the defining relations for  $\HH_{\ell,n}^{(0)}$, while 2) follows from the defining relations for  $\HH_{\ell,n}^{(0)}$ and the fact that $w_0$ has both a reduced expression which starts with $s_r$ as well as a reduced expression which ends with $s_r$ for any $1\leq r<n$.
\end{proof}

Let $s\in\Z$. For any $\Z$-graded $\HH_{\ell,n}^{(0)}$-module $M$, we define $M\<s\>$ to be a new $\Z$-graded $\HH_{\ell,n}^{(0)}$-module as follows: 1) $M\<s\>=M$ as ungraded $\HH_{\ell,n}^{(0)}$-module; 2) As a $\Z$-graded module, $M\<s\>$ is obtained by shifting the grading on~$M$ up by~$s$. That is,
$M\<s\>_d=M_{d-s}$, for $d\in\Z$.

\begin{lem} \label{bili} Let $\bmu\in\P_0$ with $\theta(\bmu)=(k_1,\cdots,k_n)$. Then $$\dim D_0=n!,\quad \dim P_0=\bigl(^\ell_n\!\bigr)n!,\quad
S^\bmu\cong D_0\<n\ell-\frac{n(n-1)}{2}-\sum_{i=1}^{n}k_i\> .
$$
\end{lem}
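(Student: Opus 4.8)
The plan is to first identify $D_0$ with the cell module $S^{\blam_{\minum}}$, then to deduce the simplicity of every $S^\bmu$ together with its grading shift, and finally to read off $\dim P_0$. For the first step, write $\blam:=\blam_{\minum}$; then $\theta(\blam)=(\ell-n+1,\dots,\ell)$, so $y_\blam=y_1^{n-1}y_2^{n-2}\cdots y_{n-1}$ is precisely the monomial occurring in Lemma \ref{1relation} and has degree $n(n-1)$. Identifying $\Std(\blam)$ with $\Sym_n$ in the usual way, $S^\blam$ has a homogeneous $K$-basis indexed by $\Sym_n$ and the Gram matrix of $\<-,-\>_\blam$ is the matrix $(c_{u,w})_{u,w\in\Sym_n}$ over $K$ determined by
\[
y_\blam\,\psi_u\psi_w^\ast\,y_\blam\ \equiv\ c_{u,w}\,y_\blam \pmod{(\HH_{\ell,n}^{(0)})^{>\blam}} .
\]
Since $(\HH_{\ell,n}^{(0)})^{>\blam}$ is a graded two-sided ideal, comparing degrees shows $c_{u,w}=0$ unless $\ell(u)+\ell(w)=\tfrac{n(n-1)}2=\ell(w_0)$; and when $\ell(u)+\ell(w)=\ell(w_0)$ the product $\psi_u\psi_w^\ast=\psi_u\psi_{w^{-1}}$ is, by Lemma \ref{2usefulProperties}(1), equal to $0$ if $uw^{-1}\ne w_0$ and to $\psi_{w_0}$ if $uw^{-1}=w_0$. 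Finally, using $y_\blam y_r\in(\HH_{\ell,n}^{(0)})^{>\blam}$ from (\ref{dominanceBigger}) together with $\psi_{w_0}y_\blam=(-1)^{n(n-1)/2}+\sum_{i,j}y_ih_{i,j}\psi_j$ from Lemma \ref{1relation}, one gets $y_\blam\psi_{w_0}y_\blam\equiv(-1)^{n(n-1)/2}y_\blam$ modulo $(\HH_{\ell,n}^{(0)})^{>\blam}$. Hence $c_{u,w}=(-1)^{n(n-1)/2}$ if $u=w_0w$ (note $\ell(w_0w)+\ell(w)=\ell(w_0)$ automatically) and $c_{u,w}=0$ otherwise, so the Gram matrix is $(-1)^{n(n-1)/2}$ times the permutation matrix of $w\mapsto w_0w$ and hence invertible. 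Therefore $\rad\<-,-\>_\blam=0$, $D_0=D^{\blam_{\minum}}=S^{\blam_{\minum}}$, and $\dim D_0=\#\Std(\blam_{\minum})=n!$.

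Next, since $D_0$ is the unique graded simple $\HH_{\ell,n}^{(0)}$-module up to shift, every composition factor of $S^\bmu$ is a shift of $D_0$; as $\dim S^\bmu=\#\Std(\bmu)=n!=\dim D_0$, the composition series of $S^\bmu$ has length one, so $S^\bmu\cong D_0\<t_\bmu\>$ for a unique $t_\bmu\in\Z$. To pin down $t_\bmu$ I will compare graded dimensions. Evaluating the definition of $\deg\t$ directly, if $\t\in\Std(\bmu)$ corresponds to $\sigma\in\Sym_n$ via $\sigma(i)=\t(k_i)$ then $\deg\t=n\ell-\sum_{i=1}^nk_i-2\ell(\sigma)$, so, with $q$ the grading variable,
\[
\dim_q S^\bmu\ =\ q^{\,n\ell-\sum_ik_i}\sum_{\sigma\in\Sym_n}q^{-2\ell(\sigma)} .
\]
Taking $\bmu=\blam_{\minum}$, where $\sum_ik_i=n\ell-\tfrac{n(n-1)}2$, gives $\dim_q D_0=\dim_q S^{\blam_{\minum}}=q^{\,n(n-1)/2}\sum_\sigma q^{-2\ell(\sigma)}$, and dividing yields $\dim_q S^\bmu=q^{\,n\ell-n(n-1)/2-\sum_ik_i}\,\dim_q D_0$. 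Hence $t_\bmu=n\ell-\tfrac{n(n-1)}2-\sum_{i=1}^nk_i$, as claimed.

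For $\dim P_0$, recall from the theory of graded cellular algebras that $P_0=P^{\blam_{\minum}}$ admits a filtration by graded shifts of the Specht modules in which $S^\blam$ occurs with total multiplicity $[S^\blam:D_0]$. By the previous paragraph, each $S^\blam$ with $\blam\in\P_0$ is a graded shift of $D_0$, so $[S^\blam:D_0]=1$ for every $\blam\in\P_0$; therefore $\dim P_0=\sum_{\blam\in\P_0}\dim S^\blam=|\P_0|\cdot n!=\binom{\ell}{n}n!$.

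The crux is the first step. Everything there reduces to the single identity $y_{\blam_{\minum}}\psi_{w_0}y_{\blam_{\minum}}\equiv(-1)^{n(n-1)/2}y_{\blam_{\minum}}$ modulo $(\HH_{\ell,n}^{(0)})^{>\blam_{\minum}}$, for which Lemma \ref{1relation} (combined with $y_{\blam_{\minum}}y_r\in(\HH_{\ell,n}^{(0)})^{>\blam_{\minum}}$, so that the error term $\sum_{i,j}y_{\blam_{\minum}}y_ih_{i,j}\psi_j$ lies in the ideal) is exactly what is needed; once that is in hand, the length bookkeeping in $\Sym_n$ and the graded-dimension computation are routine.
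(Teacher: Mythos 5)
Your proof is correct. The core computation---showing the Gram matrix of $\<-,-\>_{\blam_{\minum}}$ is $(-1)^{n(n-1)/2}$ times the permutation matrix of $w\mapsto w_0w$, via Lemma \ref{2usefulProperties}, Lemma \ref{1relation} and (\ref{dominanceBigger})---is exactly the argument the paper uses to establish $S^{\blam_{\minum}}=D^{\blam_{\minum}}$ and $\dim D_0=n!$. Where you differ is in the two auxiliary steps. For the grading shift, the paper simply asserts ``by the definitions of $\P_0$ and Specht modules\dots it is clear that $S^\bmu\cong S^{\blam_{\minum}}\<\cdots\>$'' (the cell structure is degreewise identical apart from the degree of $y_\bmu$ versus $y_{\blam_{\minum}}$), whereas you instead deduce $S^\bmu\cong D_0\<t_\bmu\>$ from a length-one composition-series count and then pin down $t_\bmu$ by evaluating $\deg\t$ explicitly and comparing graded dimensions; your closed formula $\deg\t=n\ell-\sum_ik_i-2\ell(\sigma)$ is correct and gives a slightly more self-contained argument. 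For $\dim P_0$, the paper uses the decomposition of the regular module, $\HH_{\ell,n}^{(0)}\cong P_0^{\oplus\dim D_0}$, giving $\dim P_0=\binom{\ell}{n}(n!)^2/n!$; you instead invoke the cell filtration of $P_0$ together with BGG reciprocity $[P_0:S^\bmu]=[S^\bmu:D_0]=1$ and sum the Specht dimensions. Both routes are standard and valid; the paper's is marginally lighter (no reciprocity needed), while yours makes the Specht-filtration multiplicities explicit, which is mildly illuminating given that the paper goes on to compute graded Cartan numbers in Corollary \ref{gradedCartan}.
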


\begin{proof} By the definitions of $\P_0$ and Specht modules over $\HH_{\ell,n}^{(0)}$, it is clear that $S^\bmu\cong S^{\blam_{\minum}}\<n\ell-\frac{n(n-1)}{2}-\sum_{i=1}^{n}k_i\>$. Thus it suffices to show that $S^{\blam_{\minum}}=D^{\blam_{\minum}}$. To this end, we need to compute the bilinear form between standard bases of the Specht module $S^{\blam_{\minum}}$.

By definition,  $S^{\blam_{\minum}}$ has a standard basis $$
\{y_1^{n-1}y_2^{n-2}\cdots y_{n-1}\psi_w+(\HH_{\ell,n}^{(0)})^{>\blam_{\minum}}|w\in\Sym_n\} .
$$

For any $w,u\in\Sym_n$, by Lemma \ref{2usefulProperties}, we see that $$
y_1^{n-1}y_2^{n-2}\cdots y_{n-1}\psi_w\psi_u^\ast y_1^{n-1}y_2^{n-2}\cdots y_{n-1}=y_1^{n-1}y_2^{n-2}\cdots y_{n-1}(\psi_w\psi_{u^{-1}})y_1^{n-1}y_2^{n-2}\cdots y_{n-1}=0
$$
unless $\ell(wu^{-1})=\ell(w)+\ell(u^{-1})$.

Now we assume that $\ell(wu^{-1})=\ell(w)+\ell(u^{-1})$. By the commutator relations between $y$ and $\psi$ generators, (\ref{dominanceBigger}) and the fact that $\ell(w_0)=n(n-1)2$, we can deduce that $$
y_1^{n-1}y_2^{n-2}\cdots y_{n-1}(\psi_w\psi_{u^{-1}})y_1^{n-1}y_2^{n-2}\cdots y_{n-1}=y_1^{n-1}y_2^{n-2}\cdots y_{n-1}\psi_{wu^{-1}}y_1^{n-1}y_2^{n-2}\cdots y_{n-1}\in(\HH_{\ell,n}^{(0)})^{>\blam_{\minum}}
$$
unless $wu^{-1}=w_0$. In that case, by Lemma \ref{1relation}, we have that $$\begin{aligned}
y_1^{n-1}y_2^{n-2}\cdots y_{n-1}\psi_w\psi_u^\ast y_1^{n-1}y_2^{n-2}\cdots y_{n-1}
&=y_1^{n-1}y_2^{n-2}\cdots y_{n-1}\psi_{w_0}y_1^{n-1}y_2^{n-2}\cdots y_{n-1}\\
&=(-1)^{n(n-1)/2}y_1^{n-1}y_2^{n-2}\cdots y_{n-1}\pmod{(\HH_{\ell,n}^{(0)})^{>\blam_{\minum}}}.
\end{aligned}
$$
Thus we have proved that if $\ell(wu^{-1})=\ell(w)+\ell(u^{-1})$ and $wu^{-1}=w_0$, then
$$
\<y_1^{n-1}y_2^{n-2}\cdots y_{n-1}\psi_w+(\HH_{\ell,n}^{(0)})^{>\blam_{\minum}},y_1^{n-1}y_2^{n-2}\cdots y_{n-1}\psi_u+(\HH_{\ell,n}^{(0)})^{>\blam_{\minum}}\>_{\blam_{\minum}}=(-1)^{n(n-1)/2} ;
$$
otherwise it is equal to $0$. This means the Gram matrix of $S^{\blam_{\minum}}$ is invertible and hence the bilinear form $\<-,-\>_{\blam_{\minum}}$ on $S^{\blam_{\minum}}$ is non-degenerate. It follows that $S^{\blam_{\minum}}=D^{\blam_{\minum}}=D_0$ as required.
Therefore, $\dim D_0=\dim S^{\blam_{\minum}}=n!$. Finally, since $\HH_{\ell,n}^{(0)}\cong P_0^{\oplus\dim D_0}$, we can deduce that $\dim P_0=\dim\HH_{\ell,n}^{(0)}/\dim D_0=\bigl(^\ell_n\!\bigr)(n!)^2/n!=\bigl(^\ell_n\!\bigr)n!$.
\end{proof}

Let $q$ be an indeterminate. The \textbf{graded dimension} of $M$ is the
Laurent polynomial
\begin{equation}\label{E:GrDim}
\dim_q M=\sum_{d\in\Z}(\dim_K M_d)\,q^d\in\N[q,q^{-1}],
\end{equation}
where $M_d$ is the homogeneous component of $M$ which has degree $d$. In particular, $\dim_K M=(\dim_q M)\!\bigm|_{q=1}$.
As a consequence, we can determine the graded dimension of the unique self-dual graded simple module $D_0$ and its projective cover $P_0$, and compute the graded decomposition number $d_{\bmu,\blam_{\minum}}(q):=[S^\bmu:D^{\blam_{\minum}}]_q$ and graded Cartan number $c_{\blam_{\minum},\blam_{\minum}}(q):=[P^{\blam_{\minum}}:D^{\blam_{\minum}}]_q$.

\begin{cor} \label{gradedcharacters} We have that $$
\dim_q D_0=\sum_{\t\in\Std(\blam_{\minum})}q^{\deg\t},\quad \dim_q P_0=\sum_{\substack{k_1,\cdots,k_n\in\N\\ 1\leq k_1<k_2<\cdots<k_n\leq\ell
}}\sum_{\t\in\Std(\blam_{\minum})}q^{\deg\t+2n\ell-n(n-1)-\sum_{i=1}^{n}2k_i} .
$$
\end{cor}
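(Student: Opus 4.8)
The approach is to read both identities off from Lemma~\ref{bili} together with the graded cellular structure of $\HH_{\ell,n}^{(0)}$ recorded in \eqref{HMcellular}. I would first dispose of the formula for $\dim_q D_0$. By Lemma~\ref{bili} we have $D_0=S^{\blam_{\minum}}$ as graded modules, so it suffices to compute the graded dimension of the graded cell module $S^{\blam_{\minum}}$. By the general theory of graded cellular algebras \cite{HuMathas:GradedCellular} (compare \cite{BKW:GradedSpecht}), $S^{\blam_{\minum}}$ has a homogeneous basis indexed by $\Std(\blam_{\minum})$ whose element labelled by $\t$ is homogeneous of degree $\deg\t$. Hence $\dim_q D_0=\dim_q S^{\blam_{\minum}}=\sum_{\t\in\Std(\blam_{\minum})}q^{\deg\t}$, which is the first formula.

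For $\dim_q P_0$ I would use graded BGG reciprocity for the graded cellular algebra $\HH_{\ell,n}^{(0)}$: the graded projective cover $P_0=P^{\blam_{\minum}}$ has a filtration by grading shifts of graded cell modules, and the graded multiplicity of $S^\blam$ in such a filtration equals the graded decomposition number $d_{\blam,\blam_{\minum}}(q)=[S^\blam:D_0]_q$, so that $\dim_q P_0=\sum_{\blam\in\P_0}d_{\blam,\blam_{\minum}}(q)\,\dim_q S^\blam$. Now fix $\blam\in\P_0$ with $\theta(\blam)=(k_1,\dots,k_n)$ and write $e_\blam:=n\ell-\tfrac{n(n-1)}{2}-\sum_{i=1}^n k_i$. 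By Lemma~\ref{bili} we have $S^\blam\cong D_0\<e_\blam\>$; since $D_0$ is, up to grading shift, the unique graded simple $\HH_{\ell,n}^{(0)}$-module, this yields simultaneously $d_{\blam,\blam_{\minum}}(q)=q^{e_\blam}$ and $\dim_q S^\blam=q^{e_\blam}\dim_q D_0$. Substituting these into the displayed sum, and using that $\theta$ identifies $\P_0$ with the set of tuples $(k_1,\dots,k_n)$ satisfying $1\le k_1<\cdots<k_n\le\ell$, I obtain
\begin{equation*}
\dim_q P_0=\Bigl(\sum_{1\le k_1<\cdots<k_n\le\ell}q^{\,2n\ell-n(n-1)-2\sum_{i=1}^n k_i}\Bigr)\dim_q D_0
=\sum_{1\le k_1<\cdots<k_n\le\ell}\ \sum_{\t\in\Std(\blam_{\minum})}q^{\deg\t+2n\ell-n(n-1)-\sum_{i=1}^{n}2k_i},
\end{equation*}
which is exactly the second formula.

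As a sanity check one can instead compute $\dim_q\HH_{\ell,n}^{(0)}$ from the homogeneous cellular basis \eqref{HMcellular}, whose degree is additive over the pair of standard tableaux labelling a basis element, to get $\dim_q\HH_{\ell,n}^{(0)}=\sum_{\blam\in\P_0}(\dim_q S^\blam)^2=(\dim_q D_0)^2\sum_{1\le k_1<\cdots<k_n\le\ell}q^{\,2n\ell-n(n-1)-2\sum_i k_i}$, and then combine this with the fact that $\HH_{\ell,n}^{(0)}$ is a direct sum of grading shifts of $P_0$ together with the self-duality of $D_0$, which forces $\dim_q\HH_{\ell,n}^{(0)}=(\dim_q D_0)(\dim_q P_0)$; this recovers the same expression. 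I do not anticipate a genuine obstacle here: essentially everything is already contained in Lemma~\ref{bili}, and the only points demanding care are the grading-shift conventions (recall $\dim_q M\<s\>=q^s\dim_q M$) and the availability of graded cell-module filtrations of graded projectives over $\HH_{\ell,n}^{(0)}$, which holds because $\HH_{\ell,n}^{(0)}$ is a graded cellular algebra \cite{HuMathas:GradedCellular}.
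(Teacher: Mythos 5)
Your proposal is correct and follows exactly the route the paper leaves implicit: the paper states Corollary~\ref{gradedcharacters} as an immediate consequence of Lemma~\ref{bili} together with the graded cellular structure, and your argument simply fleshes out that derivation — identify $\dim_q D_0$ with $\dim_q S^{\blam_{\minum}}=\sum_{\t}q^{\deg\t}$, then use $S^\blam\cong D_0\<e_\blam\>$ both to read off $d_{\blam,\blam_{\minum}}(q)=q^{e_\blam}$ and $\dim_q S^\blam=q^{e_\blam}\dim_q D_0$, and assemble via the graded cell-module filtration of $P_0$. Your sanity check via $\dim_q\HH_{\ell,n}^{(0)}=\sum_\blam(\dim_q S^\blam)^2=(\dim_q D_0)(\dim_q P_0)$ (using self-duality of $D_0$) is a nice way to sidestep any worry about bar-conventions in the graded reciprocity and confirms the same formula.
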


\begin{cor} \label{gradedCartan} Let $\bmu\in\P_0$ with $\theta(\bmu)=(k_1,\cdots,k_n)$. We have that $$\begin{aligned}
d_{\bmu,\blam_{\minum}}(q)&=q^{n\ell-\frac{n(n-1)}{2}-\sum_{i=1}^{n}k_i}\in\delta_{\bmu,\blam_{\minum}}+q\N[q],\\
c_{\blam_{\minum},\blam_{\minum}}(q)&=\sum_{\substack{l_1,\cdots,l_n\in\N\\ 1\leq l_1<l_2<\cdots<l_n\leq\ell
}}q^{2n\ell-n(n-1)-\sum_{i=1}^{n}2l_i}\in 1+q\N[q] .
\end{aligned}
$$
\end{cor}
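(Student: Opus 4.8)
The plan is to obtain both formulas directly from Lemma \ref{bili} and Corollary \ref{gradedcharacters}, using the fact recorded after Definition \ref{twoExtremel} that $D_0 = D^{\blam_{\minum}}$ is the \emph{unique} graded simple $\HH_{\ell,n}^{(0)}$-module up to degree shift. Because of this, every graded $\HH_{\ell,n}^{(0)}$-module $M$ has all of its graded composition factors of the form $D_0\langle s\rangle$, so comparing graded dimensions along a graded composition series gives $[M:D_0]_q = \dim_q M / \dim_q D_0$. This is the only structural input the argument needs beyond the two results just cited.

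For the graded decomposition number, Lemma \ref{bili} identifies $S^\bmu$ with $D_0\langle n\ell - \tfrac{n(n-1)}{2} - \sum_{i=1}^n k_i\rangle$, so $S^\bmu$ has a single graded composition factor and $d_{\bmu,\blam_{\minum}}(q) = [S^\bmu : D^{\blam_{\minum}}]_q = q^{\,n\ell - n(n-1)/2 - \sum_{i=1}^n k_i}$, as asserted; here one only has to keep track of the grading-shift sign convention so that this comes out as a positive power of $q$. To see that it lies in $\delta_{\bmu,\blam_{\minum}} + q\N[q]$, observe that any strictly increasing sequence $1 \le k_1 < k_2 < \cdots < k_n \le \ell$ satisfies $k_i \le \ell - n + i$, hence $\sum_{i=1}^n k_i \le \sum_{i=1}^n(\ell - n + i) = n\ell - \tfrac{n(n-1)}{2}$, with equality precisely when $(k_1,\cdots,k_n) = (\ell-n+1,\cdots,\ell) = \theta(\blam_{\minum})$; thus the exponent is $\ge 0$ and vanishes exactly when $\bmu = \blam_{\minum}$.

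For the graded Cartan number, every composition factor of $P_0 = P^{\blam_{\minum}}$ is a shift of $D_0$, so $c_{\blam_{\minum},\blam_{\minum}}(q) = [P_0:D_0]_q = \dim_q P_0 / \dim_q D_0$. Substituting the two expressions from Corollary \ref{gradedcharacters}, the formula for $\dim_q P_0$ factors as $\bigl(\sum_{1\le l_1 < \cdots < l_n \le \ell} q^{\,2n\ell - n(n-1) - \sum_{i=1}^n 2l_i}\bigr)\cdot\bigl(\sum_{\t \in \Std(\blam_{\minum})} q^{\deg\t}\bigr)$, and the second factor equals $\dim_q D_0$; it cancels, leaving the claimed sum. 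Membership in $1 + q\N[q]$ follows from the same elementary bound: each exponent $2n\ell - n(n-1) - \sum_{i=1}^n 2l_i$ is a nonnegative even integer, equal to $0$ only for $(l_1,\cdots,l_n) = (\ell-n+1,\cdots,\ell)$, so the constant term is $1$ and every other term carries a positive power of $q$.

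The whole argument is essentially bookkeeping once Lemma \ref{bili} and Corollary \ref{gradedcharacters} are available; the only step needing a little care — and the nearest thing here to an obstacle — is the passage from grading shifts to graded multiplicities, namely the identity $[M:D_0]_q = \dim_q M/\dim_q D_0$ together with fixing the degree and shift conventions so that $d_{\bmu,\blam_{\minum}}(q)$ and $c_{\blam_{\minum},\blam_{\minum}}(q)$ emerge as honest polynomials in $q$ with nonnegative coefficients. As a consistency check one may instead derive the Cartan formula from the graded cell-module filtration of $P_0$ and the already-computed $d_{\bmu,\blam_{\minum}}(q)$ by the graded version of Brauer reciprocity for cellular algebras (with the appropriate bar-involution); it yields the same answer.
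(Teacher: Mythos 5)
Your argument is correct and supplies exactly the reasoning the paper leaves implicit (neither formula is given an explicit proof; both corollaries are stated in one breath as consequences of Lemma \ref{bili}). The only input beyond the cited results is that $D_0$ is the unique graded simple up to degree shift, so graded multiplicities are read off by dividing graded dimensions; you make this explicit, and the elementary bound $\sum_{i=1}^n k_i \le n\ell - \tfrac{n(n-1)}{2}$ with equality precisely when $\theta(\bmu)=(\ell-n+1,\dots,\ell)=\theta(\blam_{\minum})$ correctly pins down the constant terms.
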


\begin{lem}\text{(\cite[Proposition 7]{HoffnungLauda:KLRnilpotency})} \label{HLlem0} For any $1\leq s\leq n$, we have that $$
\sum_{\substack{l_1,\cdots,l_s\in\N\\ l_1+\cdots+l_s=\ell-s+1}}y_1^{l_1}y_2^{l_2}\cdots y_s^{l_s}=0 .
$$
\end{lem}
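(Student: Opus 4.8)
I would prove the identity by induction on $s$, the base case $s=1$ being exactly the defining relation $y_1^\ell=0$ of $\HH_{\ell,n}^{(0)}$. For variables $y_{i_1},\cdots,y_{i_k}$ I write $h_d(y_{i_1},\cdots,y_{i_k}):=\sum_{l_1+\cdots+l_k=d}y_{i_1}^{l_1}\cdots y_{i_k}^{l_k}$ for the complete symmetric polynomial of degree $d$, so that the assertion becomes $H:=h_{\ell-s+1}(y_1,\cdots,y_s)=0$ in $\HH_{\ell,n}^{(0)}$. Put $m:=\ell-s+2$ (so that $m-1=\ell-s+1$) and $g:=h_m(y_1,\cdots,y_{s-1})$. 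Since $\ell-(s-1)+1=m$, the inductive hypothesis applied with $s-1$ in place of $s$ states precisely that $g=0$ in $\HH_{\ell,n}^{(0)}$; in particular $g\psi_{s-1}=0$.

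The mechanism is the divided difference $\partial_r f:=(f-s_r f)/(y_r-y_{r+1})$ on $K[y_1,\cdots,y_n]$, where $s_r$ interchanges $y_r$ and $y_{r+1}$, together with the identity $\psi_r f=(s_r f)\psi_r-\partial_r f$, which holds in $\HH_{\ell,n}^{(0)}$ for every polynomial $f$ in the $y$'s. (This follows from the relations of Definition~\ref{NilHecke}: both sides are $K$-linear in $f$, satisfy the same twisted Leibniz rule $D(fg)=D(f)g+(s_r f)D(g)$, and agree on the generators $y_1,\cdots,y_n$, hence agree everywhere.) Two short computations with complete symmetric polynomials are needed: expanding $g$ in powers of $y_{s-1}$ and using $\partial_{s-1}(y_{s-1}^j)=h_{j-1}(y_{s-1},y_s)$ gives $\partial_{s-1}g=h_{m-1}(y_1,\cdots,y_s)=H$, while rearranging the definition of $\partial_{s-1}$ gives $s_{s-1}g=g-(y_{s-1}-y_s)\partial_{s-1}g=g-(y_{s-1}-y_s)H$. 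Now apply $\psi_r f=(s_r f)\psi_r-\partial_r f$ with $r=s-1$ and $f=g$; since $\psi_{s-1}g=0$ this reads $0=(s_{s-1}g)\psi_{s-1}-H$, that is,
$$H=(s_{s-1}g)\psi_{s-1}.$$
Right-multiplying by $\psi_{s-1}$ and using $\psi_{s-1}^2=0$ gives $H\psi_{s-1}=0$. Finally, substituting $s_{s-1}g=g-(y_{s-1}-y_s)H$ into the displayed equation and using both $g\psi_{s-1}=0$ and $H\psi_{s-1}=0$ yields $H=g\psi_{s-1}-(y_{s-1}-y_s)H\psi_{s-1}=0$, which closes the induction.

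The one genuinely delicate point — the step I would worry about most — is isolating this particular combination. A naive application of the divided-difference calculus (for example, directly rewriting $h_{\ell-1}(y_1,y_2)$ as $\psi_1 y_1^\ell-y_2^\ell\psi_1$ in $\HH_{\ell,n}^{(0)}$ and simplifying) only ever produces tautologies. The trick is to push the \emph{vanishing} polynomial $g$ through $\partial_{s-1}$, so that $H$ is forced into the one-sided ideal $\HH_{\ell,n}^{(0)}\psi_{s-1}$, where the nilpotency relation $\psi_{s-1}^2=0$ becomes available; the closing step then needs \emph{both} consequences $g\psi_{s-1}=0$ and $H\psi_{s-1}=0$ of the inductive hypothesis. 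Everything else — the two symmetric-function identities and the relation $\psi_r f=(s_r f)\psi_r-\partial_r f$ — is routine.
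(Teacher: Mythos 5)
The paper does not prove this lemma itself: it cites it to Hoffnung--Lauda \cite[Proposition 7]{HoffnungLauda:KLRnilpotency} and gives no argument. Your proof is correct and self-contained, and it is worth recording. The engine is the standard divided-difference commutation $\psi_r f = (s_r f)\psi_r - \partial_r f$, which does indeed follow from the defining relations (together with the standard relation $\psi_r y_k = y_k\psi_r$ for $k\neq r,r+1$, which the paper uses throughout but seems to have omitted from Definition~\ref{NilHecke}), plus the two symmetric-function identities you quote; I verified that $\partial_{s-1}\bigl(h_m(y_1,\ldots,y_{s-1})\bigr)=h_{m-1}(y_1,\ldots,y_s)$ by expanding in powers of $y_{s-1}$ as you indicate. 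The inductive step is also cleanly arranged: you correctly rely only on the consequences $\psi_{s-1}g=0$, $g\psi_{s-1}=0$ of $g=0$, and you never need $s_{s-1}g=0$ (which the inductive hypothesis would not directly give you). From $H=(s_{s-1}g)\psi_{s-1}$ the nilpotency $\psi_{s-1}^2=0$ forces $H\psi_{s-1}=0$, and then substituting $s_{s-1}g=g-(y_{s-1}-y_s)H$ closes the induction. A spot check in the smallest nontrivial case ($\ell=n=s=2$, where the claim is $y_1+y_2=0$ in $\HH_{2,2}^{(0)}$) confirms the argument. For what it is worth, Hoffnung and Lauda's own proof is likewise an induction carried out in the divided-difference (equivalently, diagrammatic bubble) calculus, so you have supplied an argument in the same spirit as the one the paper delegates to the reference; the main practical virtue of writing it out as you have is that it makes the role of $\psi_{s-1}^2=0$ --- the one place the nilHecke relation rather than just polynomial algebra is used --- completely transparent.
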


\begin{rem} Note that one should identify our generator $y_r$ with the generator $-x_{r,\bi}$ in \cite{HoffnungLauda:KLRnilpotency} so that the relation $\psi_r y_{r+1}=y_r\psi_r+1$ in Definition \ref{NilHecke} matches up with the relation $x_{r,\bi}\delta_{r,i}-\delta_{r,\bi}x_{r+1,\bi}=e(\bi)$ when $i_r=i_{r+1}$.
\end{rem}

\begin{lem}\text{(\cite[Proposition 8]{HoffnungLauda:KLRnilpotency})} \label{HLlem} Let $1\leq m<n$ and $b\in\N$. If $y^{b}_{m-1}=0$ then $y^{b}_{m}=0$.
\end{lem}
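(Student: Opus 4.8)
The plan is to prove the contrapositive, or rather to argue directly by exploiting the symmetry between consecutive generators together with Lemma \ref{HLlem0}. First I would record the key structural fact underlying the whole nilHecke formalism: for any $1\leq r<n$ the subalgebra generated by $\psi_r, y_r, y_{r+1}$ (and commuting with the rest) lets one ``transport'' a power of $y_r$ to a power of $y_{r+1}$ up to lower-order corrections. Concretely, from the relation $\psi_r y_{r+1}=y_r\psi_r+1$ one deduces by an easy induction on $b$ that
\begin{equation*}
\psi_r y_{r+1}^{\,b}=y_r^{\,b}\psi_r+\sum_{a=0}^{b-1}y_r^{\,a}y_{r+1}^{\,b-1-a},
\end{equation*}
and symmetrically $y_{r+1}^{\,b}\psi_r=\psi_r y_r^{\,b}+\sum_{a=0}^{b-1}y_r^{\,a}y_{r+1}^{\,b-1-a}$. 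This is the commutator identity that makes everything go.

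Next I would set $r=m-1$ and assume $y_{m-1}^{\,b}=0$. Multiplying the first displayed identity on the left by $\psi_{m-1}$ and using $\psi_{m-1}^2=0$ gives
\begin{equation*}
0=\psi_{m-1}\!\left(y_m^{\,b}\psi_{m-1}+\sum_{a=0}^{b-1}y_{m-1}^{\,a}y_m^{\,b-1-a}\right),
\end{equation*}
but I expect the cleaner route is: apply the identity with the roles arranged so that the correction term is exactly $\sum_{l_1+l_2=b-1}y_{m-1}^{l_1}y_m^{l_2}$, which by Lemma \ref{HLlem0} (in the two-variable case $s=2$, after reindexing, noting $y_{m-1},y_m$ satisfy the same relations as $y_1,y_2$) is forced to vanish once $b=\ell-1$. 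For general $b$ the strategy instead is the following telescoping argument. From $\psi_{m-1}y_m^{\,b}=y_{m-1}^{\,b}\psi_{m-1}+\sum_{a=0}^{b-1}y_{m-1}^{\,a}y_m^{\,b-1-a}$ and $y_{m-1}^{\,b}=0$ we get $\psi_{m-1}y_m^{\,b}=\sum_{a=0}^{b-1}y_{m-1}^{\,a}y_m^{\,b-1-a}$; now multiply on the left by $y_{m-1}$ and use $y_{m-1}^{\,b}=0$ again to kill the $a=b-1$ summand, obtaining $y_{m-1}\psi_{m-1}y_m^{\,b}=\sum_{a=0}^{b-2}y_{m-1}^{\,a+1}y_m^{\,b-1-a}$; iterating and combining with the second identity $y_m^{\,b}\psi_{m-1}=\psi_{m-1}y_{m-1}^{\,b}+\sum_{a=0}^{b-1}y_{m-1}^{\,a}y_m^{\,b-1-a}=\sum_{a=0}^{b-1}y_{m-1}^{\,a}y_m^{\,b-1-a}$ lets me isolate $y_m^{\,b}$.

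The cleanest finish, and the one I would actually write, is this: from the two identities above one has $\psi_{m-1}y_m^{\,b}=y_m^{\,b}\psi_{m-1}$ modulo the left ideal $\sum_a y_{m-1}^{\,a}\mathscr H$ — more precisely $\psi_{m-1}y_m^{\,b}-y_m^{\,b}\psi_{m-1}=0$ when $y_{m-1}^b=0$, since both equal $\sum_{a=0}^{b-1}y_{m-1}^{\,a}y_m^{\,b-1-a}$. Hence $y_m^{\,b}$ commutes with $\psi_{m-1}$. But $y_m^{\,b}$ already commutes with all other $\psi_j$ ($j\neq m-1,m$ trivially, and with $\psi_m$ by the analogous computation which uses $y_{m+1}$, or one observes $y_m^b$ lies in the centralizer argument differently) and with all $y_j$; so $y_m^{\,b}$ would be central, forcing $y_m^{\,b}=\pi(f)$ for a symmetric polynomial $f$ by Lemma \ref{nilHeckeCenter}, and then comparing with $y_{m-1}^{\,b}=0$ and using that the symmetric polynomial $f=t_m^{\,b}$ is not symmetric unless... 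Actually the truly clean argument avoids centrality entirely: rewrite $y_m^{\,b}=\psi_{m-1}y_{m-1}y_m^{\,b-1}\cdots$ no — I will instead use Lemma \ref{HLlem0} directly. Since $y_{m-1}^{\,b}=0$, in particular $y_{m-1}^{\,\ell-1}=0$ (as $b\le \ell-1$, or reduce to that case), and applying Lemma \ref{HLlem0} with $s=2$ to the pair $(y_{m-1},y_m)$ after multiplying by a suitable power isolates a single term $y_m^{\,\ell-1}$, giving $y_m^{\,\ell-1}=0$; then since $\psi_{m-1}y_m=y_{m-1}\psi_{m-1}+1$ shows $y_m$ and $y_{m-1}$ generate isomorphic nilpotent subalgebras, the exact exponent transfers.

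\textbf{Main obstacle.} The delicate point is pinning down the exact nilpotency exponent rather than merely ``some power vanishes'': the commutator identity transports $y_{m-1}^{\,b}\mapsto y_m^{\,b}$ only up to a sum of genuinely lower-degree terms $\sum_{a<b}y_{m-1}^{\,a}y_m^{\,b-1-a}$, and one must show these correction terms do not obstruct the conclusion. I expect the resolution is a downward induction on $b$: if the statement is known for exponent $b-1$ it controls every correction term of the form $y_{m-1}^{\,a}y_m^{\,b-1-a}$ with $a\ge 1$ (these vanish since $y_{m-1}^{\,a}$ need not vanish, so this is not quite it) — more honestly, one should invoke Lemma \ref{HLlem0} to handle the homogeneous sum $\sum_{l_1+l_2=c}y_{m-1}^{\,l_1}y_m^{\,l_2}$ as a single vanishing entity, and then the induction closes cleanly. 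So the real content to get right is the bookkeeping that reduces the assertion to Lemma \ref{HLlem0} for $s=2$.
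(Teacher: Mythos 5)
The paper gives no proof of this statement; it simply cites \cite[Proposition 8]{HoffnungLauda:KLRnilpotency}. So the real question is whether your argument closes on its own, and it does not. Your opening is right: from the iterated commutator identity, and using $y_{m-1}^{b}=0$, you correctly obtain $\psi_{m-1}y_m^{b}=y_m^{b}\psi_{m-1}=h_{b-1}$, where $h_{b-1}:=\sum_{a=0}^{b-1}y_{m-1}^{a}y_m^{b-1-a}$. But both routes you then try are flawed. The centrality route fails because $y_m^{b}$ does not commute with $\psi_m$ unless you already control $y_{m+1}^{b}$ --- which is the very kind of statement the lemma is meant to produce; you notice this and abandon it, but nothing replaces it. The appeal to Lemma~\ref{HLlem0} applied to the pair $(y_{m-1},y_m)$ is also unjustified: that lemma is a statement about the specific string $y_1,\dots,y_s$ whose proof rests on the cyclotomic relation $y_1^\ell=0$, and there is no built-in ``translation invariance''; establishing the translated identity for $(y_{m-1},y_m)$ is exactly the kind of fact one would want Lemma~\ref{HLlem} for, so the argument is circular once $m\geq 3$. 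And even granting it, you would only get $y_m^{\ell-1}=0$, not $y_m^{b}=0$ for the given $b$; ``the exact exponent transfers'' is asserted, not proved.

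The fix is short and entirely local to $\psi_{m-1},y_{m-1},y_m$, needing neither centrality nor Lemma~\ref{HLlem0}. From $y_m^{b}\psi_{m-1}=h_{b-1}$, right-multiply by $\psi_{m-1}$ and use $\psi_{m-1}^2=0$ to get $h_{b-1}\psi_{m-1}=0$. Next, using $\psi_{m-1}y_m-y_{m-1}\psi_{m-1}=1$,
\begin{equation*}
y_m^{b}=y_m^{b}\bigl(\psi_{m-1}y_m-y_{m-1}\psi_{m-1}\bigr)=(y_m^{b}\psi_{m-1})y_m-y_{m-1}(y_m^{b}\psi_{m-1})=(y_m-y_{m-1})h_{b-1},
\end{equation*}
since $h_{b-1}$ is a polynomial in $y_{m-1},y_m$ and so commutes with them. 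Right-multiplying this identity by $\psi_{m-1}$ gives $h_{b-1}=y_m^{b}\psi_{m-1}=(y_m-y_{m-1})h_{b-1}\psi_{m-1}=0$, and therefore $y_m^{b}=(y_m-y_{m-1})\cdot 0=0$. This is precisely the ``bookkeeping'' you flagged at the end as the real content; without it, the proposal stops short of a proof.
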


\begin{lem} \label{vanish1} For any $2\leq m\leq n$ and $\omega_m>\ell-m$, we have that
\begin{equation}\label{step1}
y^{\ell-1}_{1}y^{\ell-2}_{2} \cdots y^{\ell-m+1}_{m-1}y^{\omega_m}_{m}=0,
\end{equation}
\end{lem}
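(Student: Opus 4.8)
The plan is to prove (\ref{step1}) by induction on $m$, using Lemma \ref{HLlem0} as the engine and Lemma \ref{HLlem} as the propagation tool. The base case $m=2$ asks us to show $y_1^{\ell-1}y_2^{\omega_2}=0$ whenever $\omega_2>\ell-2$, i.e. whenever $\omega_2\geq\ell-1$. Applying Lemma \ref{HLlem0} with $s=1$ gives $y_1^\ell=0$, hence $y_1^{\ell-1}\neq 0$ is not guaranteed but in any case $y_1^{\ell}=0$; applying it with $s=2$ gives $\sum_{l_1+l_2=\ell-1}y_1^{l_1}y_2^{l_2}=0$. Multiplying this relation on the left by $y_1^{\ell-2}$ kills every term with $l_1\geq 2$ (since $y_1^\ell=0$), leaving $y_1^{\ell-2}(y_1 y_2^{\ell-2}+y_2^{\ell-1})=y_1^{\ell-1}y_2^{\ell-2}+y_1^{\ell-2}y_2^{\ell-1}=0$; but the first summand is $y_1^{\ell-1}y_2^{\ell-2}$, and multiplying the $s=2$ relation instead by $y_1^{\ell-1}$ gives $y_1^{\ell-1}y_2^{\ell-2}+y_1^{\ell}y_2^{\ell-1}+\cdots = y_1^{\ell-1}y_2^{\ell-2}=0$ directly (all other terms vanish by $y_1^\ell=0$). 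Then Lemma \ref{HLlem} (with $b=\ell-2$, noting $y_1^{\ell-2}\cdot$ — actually applied to the element $y_1^{\ell-1}y_2^{\cdots}$ regarded appropriately) promotes $y_2^{\ell-1}=0$: more carefully, from $y_1^{\ell-1}y_2^{\ell-2}=0$ together with manipulations I will obtain $y_2^{\ell-1}=0$ after multiplying by suitable powers, which then gives $y_1^{\ell-1}y_2^{\omega_2}=0$ for all $\omega_2\geq\ell-1$.

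For the inductive step, assume (\ref{step1}) holds for $m-1$, i.e. $y_1^{\ell-1}\cdots y_{m-2}^{\ell-m+2}y_{m-1}^{\omega_{m-1}}=0$ for all $\omega_{m-1}>\ell-m+1$. I want $y_1^{\ell-1}\cdots y_{m-1}^{\ell-m+1}y_m^{\omega_m}=0$ for $\omega_m>\ell-m$. The idea is to apply Lemma \ref{HLlem0} with $s=m$ to get $\sum_{l_1+\cdots+l_m=\ell-m+1}y_1^{l_1}\cdots y_m^{l_m}=0$, and then multiply this identity on the left by the monomial $y_1^{\ell-1}y_2^{\ell-2}\cdots y_{m-1}^{\ell-m+1}$ — note the exponent on $y_{m-1}$ here is $\ell-m+1$, which is $>\ell-m+1$? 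No, it equals $\ell-m+1$; I will instead multiply by $y_1^{\ell-2}\cdots y_{m-1}^{\ell-m}$ or a slightly larger prefix and use the induction hypothesis to annihilate all terms in the sum except the one with $l_m$ as large as possible. Concretely, in the expanded sum a term $y_1^{\ell-1+l_1}\cdots y_{m-1}^{(\ell-m+1)+l_{m-1}}y_m^{l_m}$ with $l_{m-1}\geq 1$ has $y_{m-1}$-exponent $\geq\ell-m+2>\ell-m+1$, but to invoke the induction hypothesis I need the full prefix $y_1^{\ell-1}\cdots y_{m-2}^{\ell-m+2}$ present; the exponents $(\ell-1)+l_1,\ldots,(\ell-m+2)+l_{m-2}$ are all at least the required $\ell-1,\ldots,\ell-m+2$, so each such term vanishes by the induction hypothesis applied after absorbing the extra powers. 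Similarly terms with $l_j\geq 1$ for some $j<m-1$ die because the prefix exponents only increase. This leaves only the single surviving term with $l_1=\cdots=l_{m-1}=0$ and $l_m=\ell-m+1$, giving $y_1^{\ell-1}\cdots y_{m-1}^{\ell-m+1}y_m^{\ell-m+1}=0$, i.e. (\ref{step1}) for the exponent $\omega_m=\ell-m+1$. Finally, to get all $\omega_m>\ell-m$, i.e. also $\omega_m=\ell-m+2,\ell-m+3,\ldots$ — these follow from the case $\omega_m=\ell-m+1$ by multiplying on the right by further powers of $y_m$ — wait, that only handles $\omega_m\geq\ell-m+1$, which is exactly $\omega_m>\ell-m$; good. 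Actually one must be slightly careful that $\omega_m>\ell-m$ means $\omega_m\geq\ell-m+1$, so establishing the boundary case $\omega_m=\ell-m+1$ and right-multiplying by $y_m^{\omega_m-(\ell-m+1)}$ finishes it.

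The main obstacle I anticipate is the bookkeeping in the inductive step: making sure that when Lemma \ref{HLlem0} is multiplied by the right prefix monomial, every term except the desired one is genuinely killed by the induction hypothesis — this requires that the prefix I multiply by, plus any $l_j\geq 1$, always produces a monomial that literally contains $y_1^{\ell-1}\cdots y_{m-2}^{\ell-m+2}$ times a power of $y_{m-1}$ exceeding $\ell-m+1$, so that the hypothesis applies verbatim (the $y$'s commute, so reordering is free). A secondary subtlety is the base case, where one needs both $y_1^\ell=0$ (from Lemma \ref{HLlem0} with $s=1$, or from the cyclotomic relation $y_1^\ell=0$ directly) and a promotion step via Lemma \ref{HLlem} to pass from a statement about $y_1^{\ell-1}y_2^{\ell-2}$ to one about $y_2^{\ell-1}$ and hence all higher powers of $y_2$; I expect this to go through cleanly but it is where the logic is least mechanical. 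Everything else is routine commutation in $\HH_{\ell,n}^{(0)}$.
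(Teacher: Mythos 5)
Your overall strategy matches the paper's: induct on $m$, with Lemma~\ref{HLlem0} supplying the identity and the induction hypothesis killing all but one term, then right-multiply by a power of $y_m$ to pass from the boundary exponent to a general $\omega_m$. The inductive step is right in substance, but two things need repair.

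In the base case, multiplying $\sum_{l_1+l_2=\ell-1}y_1^{l_1}y_2^{l_2}=0$ on the left by $y_1^{\ell-1}$ yields $y_1^{\ell-1}y_2^{\ell-1}=0$ (the surviving term has $l_1=0$, $l_2=\ell-1$), not $y_1^{\ell-1}y_2^{\ell-2}=0$. More seriously, the plan to ``obtain $y_2^{\ell-1}=0$'' is wrong: $y_2^{\ell-1}$ is nonzero in general (for instance $y_2=-y_1\neq 0$ in $\HH_{2,2}^{(0)}$), and Lemma~\ref{HLlem} only gives $y_2^{\ell}=0$. You do not need that stronger statement: once you have $y_1^{\ell-1}y_2^{\ell-1}=0$, right-multiplying by $y_2^{\omega_2-\ell+1}$ handles every $\omega_2\geq\ell-1$. (The paper invokes Lemma~\ref{HLlem} to reduce to $\omega_2=\ell-1$; with the corrected computation that invocation becomes unnecessary.)

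In the inductive step you must use the complete (strong) hypothesis, as the paper states it: ``(\ref{step1}) holds for all $2\leq k\leq m-1$.'' When a term has $l_j\geq 1$ with $j$ minimal and $2\leq j\leq m-1$, the factor $y_1^{\ell-1}\cdots y_{j-1}^{\ell-j+1}y_j^{\ell-j+l_j}$ vanishes by the instance of (\ref{step1}) at index $j$; this is \emph{not} covered by the instance at index $m-1$ unless $l_{m-1}\geq 1$, so the observation that ``the prefix exponents only increase'' does not by itself let you invoke the $m-1$ case when $l_{m-1}=0$ and the $y_{m-1}$-exponent is exactly $\ell-m+1$. Once the hypothesis is taken in the strong form and invoked at the minimal index $j$ with $l_j\geq 1$ (and $y_1^{\ell}=0$ when $j=1$), the argument closes exactly as in the paper.
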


\begin{proof} We use induction on $m$. If $m=1$, then (\ref{step1}) reduces to $y^{\omega_1}_1=0$ for $\omega_1>\ell-1$, which certainly holds by the fact that $y_1^\ell=0$.

If $m=2$, then we need to show that $y^{\ell-1}_{1}y^{\omega_2}_{2}=0$ whenever $\omega_2>\ell-2$. By Lemma \ref{HLlem}, we can deduce that $y^{\ell}_{2}=0$ from the equality $y_1^\ell=0$. Therefore, it remains to show that $y^{\ell-1}_{1}y^{\ell-1}_{2}=0$. In this case, applying Lemma \ref{HLlem0}, we get that
$$
y^{\ell-1}_{2}=\sum_{\substack{l_1,l_2\in\N, l_1\neq 0\\ l_1+l_2=\ell-1}}y^{l_1}_{1}y^{l_2}_{2}.
$$
It follows that $$
y^{\ell-1}_{1}y^{\ell-1}_{2}=-\sum_{\substack{l_1,l_2\in\N, l_1\neq 0\\ l_1+l_2=\ell-1}}y^{\ell-1+l_1}_{1}y^{l_2}_{2}=0,
$$
as required.

Now we assume that (\ref{step1}) holds for any $2\leq k \leq m$. That says, $y^{\ell-1}_{1}y^{\ell-2}_{2} \cdots y^{\ell-k+1}_{k-1}y^{\omega_k}_{k}=0$
whenever $\omega_k>\ell-k$.

Applying Lemma \ref{HLlem0} for $s=m+1$, we get that
$$
y^{\ell-m}_{m+1}=\sum_{\substack{l_1,\cdots,l_{m+1}\in\N\\ l_{m+1}\neq \ell-m, l_1+\cdots +l_{m+1}=\ell-m}}
y^{l_1}_{1}y^{l_2}_{2}\cdots y^{l_{m+1}}_{m+1}.
$$
It follows that for any $\omega_{m+1}> \ell-(m+1)$,
$$
\begin{aligned}
&\quad\,y^{\ell-1}_{1}y^{\ell-2}_{2} \cdots y^{\ell-m+1}_{m-1}y^{\omega_{m+1}}_{m+1}  \\
&=y^{\ell-1}_{1}y^{\ell-2}_{2} \cdots y^{\ell-m+1}_{m-1}y^{\omega_{m+1}-(\ell-m)}_{m+1}y^{\ell-m}_{m+1}  \\
&=-\sum_{\substack{l_2,\cdots,l_{m+1}\in\N\\ l_{m+1}\neq \ell-m, l_2+\cdots +l_{m+1}=\ell-m}}y^{\ell-1}_{1}y^{\ell-2+l_2}_{2}\cdots y^{\omega_{m+1}-(\ell-m)+l_{m+1}}_{m+1} \\
&=-\sum_{\substack{l_m,l_{m+1}\in\N\\ l_{m+1}\neq \ell-m, l_m+l_{m+1}=\ell-m}}y^{\ell-1}_{1}y^{\ell-2}_{2}\cdots y^{\ell-m+1}_{m-1} y^{\ell-m+l_{m}}_{m} y^{\omega_{m+1}-(\ell-m)+l_{m+1}}_{m+1}  \\
&=0.
\end{aligned}
$$
where we have used induction hypothesis in the third and fourth equalities. This completes the proof of the lemma.\end{proof}

\begin{cor} \label{matrixUnit1} For any $z_1,z_2\in\Sym_n$, we define $F_{z_1,z_2}:=(-1)^{n(n-1)/2}\psi_{w_0z_1,z_2}^{\blam_{\minum}}$. Then $F_{z_1,z_2}\neq 0$ is a homogeneous element of degree $2\ell(z_1)-2\ell(z_2)$. Suppose that $\ell=n$. Then $\sum_{w\in\Sym_n}F_{w,w}=1$ and 	
	$$
	F_{z_1,z_2}F_{u_1,u_2}=\delta_{z_2,u_1}F_{z_1,u_2},\,\,\forall\,u_1,u_2\in\Sym_n .
	$$
	In particular, $\HH_{n,n}^{(0)}$ is isomorphic to the full matrix algebra $M_{n!\times n!}(K)$ over $K$ with $\{F_{u,w}\}_{u,w\in\Sym_n}$ being  a complete set of matrix units.	
\end{cor}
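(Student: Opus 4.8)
The plan is to verify the three assertions in order: nonvanishing of $F_{z_1,z_2}$, the multiplication rule (for $\ell=n$), and the completeness/matrix-unit consequence.

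First I would settle the degree count, which is immediate from the stated degree formula for $\psi_{w_0z_1,z_2}^{\blam_{\minum}}$: here $\theta(\blam_{\minum})=(\ell-n+1,\ell-n+2,\dots,\ell)$, so $\deg y_{\blam_{\minum}} = 2\ell n - 2\sum_{i}k_i = n(n-1)$, and $\ell(w_0z_1) = \ell(w_0)-\ell(z_1)$ since $w_0$ is longest; plugging in gives $\deg F_{z_1,z_2} = 2\ell(z_1) - 2\ell(z_2) - n(n-1)$. Nonvanishing of $F_{z_1,z_2}$ is automatic from the cellular-basis property \eqref{HMcellular}, since $F_{z_1,z_2}$ is a nonzero scalar times a basis element. (One could also note the key point below — that $F_{w,w}$ acts as a nonzero idempotent-type operator — which re-proves nonvanishing.)

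The heart of the matter is the multiplication rule when $\ell=n$, i.e. $\blam_{\minum} = \blam_{\maxim}$ and $y_{\blam_{\minum}} = y_1^{n-1}y_2^{n-2}\cdots y_{n-1}$. I would compute
$$
F_{z_1,z_2}F_{u_1,u_2} = (-1)^{n(n-1)}\,\psi_{w_0z_1}^{\ast}\, y_{\blam_{\minum}}\,(\psi_{z_2}\psi_{u_1}^{\ast})\, y_{\blam_{\minum}}\,\psi_{u_2},
$$
and analyze the middle factor $y_{\blam_{\minum}}(\psi_{z_2}\psi_{u_1^{-1}})y_{\blam_{\minum}}$. By Lemma \ref{2usefulProperties}(1) this vanishes unless $\ell(z_2u_1^{-1}) = \ell(z_2)+\ell(u_1^{-1})$, and then by the argument already run in the proof of Lemma \ref{bili} (using \eqref{dominanceBigger} and the degree/length bounds), since $\ell=n$ forces $(\HH_{\ell,n}^{(0)})^{>\blam_{\minum}} = 0$, it vanishes unless $z_2u_1^{-1} = w_0$, i.e. $u_1 = w_0^{-1}z_2 = w_0 z_2$; in that remaining case Lemma \ref{1relation} gives $y_{\blam_{\minum}}\psi_{w_0}y_{\blam_{\minum}} \equiv (-1)^{n(n-1)/2} y_{\blam_{\minum}}$, and since there is no error ideal this is an exact equality. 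So $F_{z_1,z_2}F_{u_1,u_2}$ is nonzero only when $u_1 = w_0z_2$, and then equals $(-1)^{n(n-1)}(-1)^{n(n-1)/2}\psi_{w_0z_1}^{\ast}y_{\blam_{\minum}}\psi_{u_2} = (-1)^{n(n-1)/2}\psi_{w_0z_1,u_2}^{\blam_{\minum}} = F_{z_1,u_2}$. Rewriting the condition $u_1 = w_0z_2$: if instead one indexes by replacing the first subscript's role, one checks the condition $\delta_{z_2,u_1}$ claimed in the statement corresponds to a consistent reindexing — concretely, one should verify that $F_{z_1,z_2}$ as defined satisfies $F_{z_1,z_2}F_{u_1,u_2} = \delta_{z_2,u_1}F_{z_1,u_2}$ by tracking that $\psi_{w_0z_1}^\ast = \psi_{z_1^{-1}w_0}$ and matching $w_0$-twists carefully; this bookkeeping with $w_0$-shifts is the one place to be careful.

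For $\sum_{w\in\Sym_n}F_{w,w} = 1$: the elements $F_{w,w}$ are pairwise orthogonal idempotents by the rule just proved, and there are $n! = \dim D_0$ of them; since $\HH_{n,n}^{(0)} \cong P_0^{\oplus n!}$ with $\dim P_0 = n!$ (Lemma \ref{bili} with $\ell=n$, so $\binom{\ell}{n}=1$), each $F_{w,w}\HH_{n,n}^{(0)}$ has dimension at most... — more directly, the two-sided span of $\{F_{z_1,z_2}\}$ is a subalgebra isomorphic to $M_{n!}(K)$ of dimension $(n!)^2 = \dim \HH_{n,n}^{(0)}$, hence equals the whole algebra, and $1$ is the sum of the diagonal matrix units. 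Finally, the isomorphism $\HH_{n,n}^{(0)} \cong M_{n!\times n!}(K)$ with $\{F_{u,w}\}$ a complete set of matrix units is then a restatement. The main obstacle is the careful $w_0$-bookkeeping in the middle step — pinning down exactly which index equality survives and confirming it is the claimed $\delta_{z_2,u_1}$ — together with invoking that the dominance ideal $(\HH_{n,n}^{(0)})^{>\blam_{\minum}}$ is zero precisely because $\ell = n$ makes $\blam_{\minum}$ the top element of $\P_0$.
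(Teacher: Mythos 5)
Your overall strategy tracks the paper's proof exactly, and the framing around the vanishing of the dominance ideal when $\ell=n$ is a valid substitute for the paper's use of (\ref{rightVanish}). However, your computation of the crucial middle factor contains a concrete error that you notice but do not repair. You write $F_{z_1,z_2}F_{u_1,u_2}$ with middle factor $\psi_{z_2}\psi_{u_1}^\ast=\psi_{z_2}\psi_{u_1^{-1}}$, whereas $F_{u_1,u_2}=(-1)^{n(n-1)/2}\psi_{w_0u_1}^\ast y_{\blam_{\minum}}\psi_{u_2}$, so the correct middle factor is $\psi_{z_2}\psi_{w_0u_1}^\ast=\psi_{z_2}\psi_{(w_0u_1)^{-1}}=\psi_{z_2}\psi_{u_1^{-1}w_0}$. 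From your incorrect version you derived the survival condition $z_2u_1^{-1}=w_0$, i.e.\ $u_1=w_0z_2$, which manifestly disagrees with the claimed $\delta_{z_2,u_1}$; you observed the mismatch and proposed that ``a consistent reindexing'' should reconcile it, but supplied none. There is in fact no reindexing to do: with the correct middle factor, Lemma~\ref{2usefulProperties}(1) gives vanishing unless $\ell(z_2u_1^{-1}w_0)=\ell(z_2)+\ell(u_1^{-1}w_0)$, in which case the middle collapses to $\psi_{z_2u_1^{-1}w_0}$; since the dominance ideal is zero when $\ell=n$ (equivalently, by (\ref{rightVanish})), $y_{\blam_{\minum}}\psi_{z_2u_1^{-1}w_0}y_{\blam_{\minum}}$ vanishes unless $z_2u_1^{-1}w_0=w_0$, i.e.\ $z_2=u_1$; and in that case Lemma~\ref{1relation} yields exactly $F_{z_1,u_2}$. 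So the $w_0$-twist bookkeeping that you flagged as ``the one place to be careful'' is precisely where the slip occurs, and correcting it closes the gap cleanly. The remaining pieces of your proposal — nonvanishing from the cellular-basis property, the dimension count $\bigl(^n_n\bigr)(n!)^2=(n!)^2$ forcing the span of the $F_{z_1,z_2}$ to be all of $\HH_{n,n}^{(0)}$, and $\sum_w F_{w,w}=1$ — are sound and agree with the paper. (Minor aside, not affecting the substance: plugging $\deg y_{\blam_{\minum}}=n(n-1)$ and $\ell(w_0z_1)=\ell(w_0)-\ell(z_1)$ into $\deg\psi_{w,u}^\blam=\deg y_\blam-2\ell(w)-2\ell(u)$ gives $\deg F_{z_1,z_2}=2\ell(z_1)-2\ell(z_2)$, not $2\ell(z_1)-2\ell(z_2)-n(n-1)$; your stated degree copies the formula printed in the statement of the corollary, which appears to carry a typo, since the $F_{w,w}$ must be degree-zero idempotents summing to $1$.)
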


\begin{proof} As a cellular basis element, we know that $\psi_{w_0z_1,z_2}^{\blam_{\minum}}\neq 0$ and hence $F_{z_1,z_2}\neq 0$. By definition, it is clear that $F_{z_1,z_2}$ is a homogeneous element of degree $2\ell(z_1)-2\ell(z_2)$.

Suppose that $\ell=n$. By Lemma \ref{vanish1}, for any $1\leq r\leq n$, we have that \begin{equation}\label{rightVanish}
	y_1^{n-1}y_2^{n-2}\cdots y_{n-1}y_r=(y_1^{n-1}y_2^{n-2}\cdots y_{r+1}^{n-r-1}y_r^{n-r+1})y_{r-1}^{n-r+1}y_{r-2}^{n-r+2}\cdots y_{n-1}=0 .
	\end{equation}
For any $u_1,u_2\in\Sym_n$, $$\begin{aligned}
F_{z_1,z_2}F_{u_1,u_2}&=\psi_{w_0z_1}^{\ast}y_1^{n-1}y_2^{n-2}\cdots y_{n-1}\psi_{z_2}\psi_{w_0u_1}^\ast y_1^{n-1}y_2^{n-2}\cdots y_{n-1}\psi_{u_2} .
\end{aligned}
$$
By Lemma \ref{2usefulProperties}, the above equality is zero unless $\ell(z_2(w_0u_1)^{-1})=\ell(z_2)+\ell((w_0u_1)^{-1})$. So we can assume that
$\ell(z_2(w_0u_1)^{-1})=\ell(z_2)+\ell((w_0u_1)^{-1})$. Then we get that $$\begin{aligned}
F_{z_1,z_2}F_{u_1,u_2}&=\psi_{w_0z_1}^{\ast}y_1^{n-1}y_2^{n-2}\cdots y_{n-1}\psi_{z_2u_1^{-1}w_0^{-1}}y_1^{n-1}y_2^{n-2}\cdots y_{n-1}\psi_{u_2} .
\end{aligned}
$$
Note that $w_0$ is the unique longest element in $\Sym_n$ with length $(n-1)n/2$. If $z_2u_1^{-1}w_0^{-1}\neq w_0$ then we must have that $$
\psi_{z_2u_1^{-1}w_0^{-1}}y_1^{n-1}y_2^{n-2}\cdots y_{n-1}\in\sum_{j=1}^{n}y_j\HH_{n,n}^{(0)} .
$$
In that case, $F_{z_1,z_2}F_{u_1,u_2}=0$ by (\ref{rightVanish}). Therefore, we can further assume that $z_2u_1^{-1}w_0^{-1}=w_0$ and hence $z_2=u_1$. In the latter case, $F_{z_1,z_2}F_{u_1,u_2}=F_{z_1,u_2}$ by Lemma \ref{1relation} and (\ref{rightVanish}). This proves the first part of the corollary.

The second part of the corollary follows from Corollary \ref{matrixUnit1} and the fact that $\dim\HH_{n,n}^{(0)}=(n!)^2$ and $\{F_{z_1,z_2}|z_1,z_2\in\Sym_n\}$ is a basis of $\HH_{n,n}^{(0)}$.
\end{proof}

Recall that the weak Bruhat order ``$\succeq$" on $\Sym_n$ is defined as follows (cf. \cite{DJ1}): For $u,w\in\Sym_n$, let $u\succeq w$ if there is a reduced expression
$w=s_{j_1}\cdots s_{j_k}$ for $w$ and $u=s_{j_1}\cdots s_{j_l}$ for some $l\leq k$. We write $u\succ w$ if $u\succeq w$ and $u\neq w$.

\begin{cor} \label{fprimecor} Let $\ell,n\in\N$. For any $z_1,z_2\in\Sym_n$, we define $$
	F'_{z_1,z_2}:=\psi_{w_0z_1}^{\ast}y_1^{n-1}y_2^{n-2}\cdots y_{n-1}\psi_{w_0}y_1^{n-1}y_2^{n-2}\cdots y_{n-1}\psi_{z_2}.
$$
Then $F'_{z_1,z_2}\neq 0$ is a homogeneous element of degree $2\ell(z_1)-2\ell(z_2)$, and $$\begin{aligned}
&(F'_{z_1,z_1})^2=F'_{z_1,z_1},\quad F'_{z_1,z_2}=F'_{z_1,z_1}F'_{z_1,z_2}=F'_{z_1,z_2}F'_{z_2,z_2},\\
& F'_{z_1,z_2}F'_{z_2,u_2}=F'_{z_1,u_2},\,\, F'_{z_1,z_2}F'_{u_1,u_2}=0,\,\,\,\text{$\forall\,u_1,u_2\in\Sym_n$ with $z_2^{-1}\nsucceq u_1^{-1}$} .\\
\end{aligned}$$
\end{cor}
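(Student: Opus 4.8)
The plan is to reduce everything to the single identity
\begin{equation*}
\psi_{w_0}\,y_{\blam_{\minum}}\,\psi_{w_0}=(-1)^{n(n-1)/2}\,\psi_{w_0},\tag{$\star$}
\end{equation*}
where $y_{\blam_{\minum}}:=y_1^{n-1}y_2^{n-2}\cdots y_{n-1}$, so that $F'_{z_1,z_2}=\psi^\ast_{w_0z_1}\,y_{\blam_{\minum}}\psi_{w_0}y_{\blam_{\minum}}\,\psi_{z_2}$ and $\psi^{\blam_{\minum}}_{w_0z_1,z_2}=\psi^\ast_{w_0z_1}y_{\blam_{\minum}}\psi_{z_2}$. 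Identity $(\star)$ follows at once from the first equality of Lemma \ref{1relation}, namely $\psi_{w_0}y_{\blam_{\minum}}=(-1)^{n(n-1)/2}+\sum_{i,j}y_ih_{i,j}\psi_j$: right-multiplying by $\psi_{w_0}$ kills the sum term by term, since $\psi_j\psi_{w_0}=0$ by Lemma \ref{2usefulProperties}(2). Apart from $(\star)$, the Coxeter-theoretic input I use is limited to the standard identities $\ell(xw_0)=\ell(w_0x)=\ell(w_0)-\ell(x)$, $\ell(x)=\ell(x^{-1})$, $\psi^\ast_x=\psi_{x^{-1}}$, and $\psi_x\psi_y=\psi_{xy}$ whenever $\ell(xy)=\ell(x)+\ell(y)$.

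\emph{Non-vanishing and degree.} Substituting the expression for $\psi_{w_0}y_{\blam_{\minum}}$ above into $F'_{z_1,z_2}=\psi^\ast_{w_0z_1}\bigl(y_{\blam_{\minum}}\psi_{w_0}y_{\blam_{\minum}}\bigr)\psi_{z_2}$ gives
\[
F'_{z_1,z_2}=(-1)^{n(n-1)/2}\,\psi^{\blam_{\minum}}_{w_0z_1,z_2}+\sum_{i,j}\psi^\ast_{w_0z_1}\bigl(y_{\blam_{\minum}}y_i\bigr)h_{i,j}\psi_j\psi_{z_2}.
\]
By \eqref{dominanceBigger} each $y_{\blam_{\minum}}y_i$ lies in the two-sided ideal $(\HH_{\ell,n}^{(0)})^{>\blam_{\minum}}$, hence so does the whole sum, so $F'_{z_1,z_2}\equiv(-1)^{n(n-1)/2}\psi^{\blam_{\minum}}_{w_0z_1,z_2}\pmod{(\HH_{\ell,n}^{(0)})^{>\blam_{\minum}}}$. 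Since $\psi^{\blam_{\minum}}_{w_0z_1,z_2}$ is one of the cellular basis elements \eqref{HMcellular} with cell index $\blam_{\minum}$, while $(\HH_{\ell,n}^{(0)})^{>\blam_{\minum}}$ is spanned by cellular basis elements of strictly larger index, linear independence of \eqref{HMcellular} forces $\psi^{\blam_{\minum}}_{w_0z_1,z_2}\notin(\HH_{\ell,n}^{(0)})^{>\blam_{\minum}}$; hence $F'_{z_1,z_2}\neq0$. Being a product of homogeneous elements, $F'_{z_1,z_2}$ is homogeneous, of degree $-2\ell(w_0z_1)+2\deg y_{\blam_{\minum}}+\deg\psi_{w_0}-2\ell(z_2)$, which — as the extra factor $\psi_{w_0}y_{\blam_{\minum}}$ has degree $0$ — equals $\deg\psi^{\blam_{\minum}}_{w_0z_1,z_2}$.

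\emph{The product relations.} For any $z\in\Sym_n$ one has $\psi^\ast_{w_0z}=\psi_{z^{-1}w_0}$ and $\ell(z)+\ell(z^{-1}w_0)=\ell(z)+\bigl(\ell(w_0)-\ell(z)\bigr)=\ell(w_0)=\ell\bigl(z\cdot z^{-1}w_0\bigr)$, hence $\psi_z\psi^\ast_{w_0z}=\psi_{w_0}$. Consequently, for all $z_1,z_2,u_2\in\Sym_n$,
\[
F'_{z_1,z_2}F'_{z_2,u_2}=\psi^\ast_{w_0z_1}\,y_{\blam_{\minum}}\psi_{w_0}y_{\blam_{\minum}}\,\bigl(\psi_{z_2}\psi^\ast_{w_0z_2}\bigr)\,y_{\blam_{\minum}}\psi_{w_0}y_{\blam_{\minum}}\,\psi_{u_2}.
\]
Substituting $\psi_{z_2}\psi^\ast_{w_0z_2}=\psi_{w_0}$ and collapsing the resulting central string $y_{\blam_{\minum}}\psi_{w_0}y_{\blam_{\minum}}\psi_{w_0}y_{\blam_{\minum}}\psi_{w_0}y_{\blam_{\minum}}$ to $y_{\blam_{\minum}}\psi_{w_0}y_{\blam_{\minum}}$ by two applications of $(\star)$ (note $(-1)^{n(n-1)}=1$), we obtain $F'_{z_1,z_2}F'_{z_2,u_2}=F'_{z_1,u_2}$. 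Specialising $(z_1,z_2,u_2)$ to $(z_1,z_1,z_1)$, $(z_1,z_1,z_2)$, and $(z_1,z_2,z_2)$ yields $(F'_{z_1,z_1})^2=F'_{z_1,z_1}$, $F'_{z_1,z_1}F'_{z_1,z_2}=F'_{z_1,z_2}$, and $F'_{z_1,z_2}F'_{z_2,z_2}=F'_{z_1,z_2}$, respectively.

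\emph{The vanishing, and the main obstacle.} For the last assertion, $\psi_{z_2}\psi^\ast_{w_0u_1}=\psi_{z_2}\psi_{u_1^{-1}w_0}$, and by Lemma \ref{2usefulProperties}(1) this vanishes unless $\ell(z_2)+\ell(u_1^{-1}w_0)=\ell(z_2u_1^{-1}w_0)$. Using $\ell(u_1^{-1}w_0)=\ell(w_0)-\ell(u_1)$ and $\ell(z_2u_1^{-1}w_0)=\ell(w_0)-\ell(z_2u_1^{-1})$, this condition rearranges to $\ell(u_1^{-1})=\ell(z_2^{-1})+\ell(z_2u_1^{-1})$, and since $z_2u_1^{-1}=(z_2^{-1})^{-1}u_1^{-1}$ it says precisely that $z_2^{-1}\succeq u_1^{-1}$. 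Hence, when $z_2^{-1}\nsucceq u_1^{-1}$, we get $\psi_{z_2}\psi^\ast_{w_0u_1}=0$, and since this product occurs as the inner factor $\bigl(\psi_{z_2}\psi^\ast_{w_0u_1}\bigr)$ in the expansion $F'_{z_1,z_2}F'_{u_1,u_2}=\psi^\ast_{w_0z_1}\,y_{\blam_{\minum}}\psi_{w_0}y_{\blam_{\minum}}\bigl(\psi_{z_2}\psi^\ast_{w_0u_1}\bigr)y_{\blam_{\minum}}\psi_{w_0}y_{\blam_{\minum}}\psi_{u_2}$, the whole product is zero. The one step here that is not pure formalism is this identification of the length equality coming from Lemma \ref{2usefulProperties}(1) with the prefix characterisation of $z_2^{-1}\succeq u_1^{-1}$; the rest is bookkeeping organised around $(\star)$.
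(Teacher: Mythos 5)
Your proof is correct and takes essentially the same route as the paper's: both extract from Lemma \ref{1relation} together with Lemma \ref{2usefulProperties}(2) the central identity $\psi_{w_0}y_{\blam_{\minum}}\psi_{w_0}=(-1)^{n(n-1)/2}\psi_{w_0}$, combine it with $\psi_z\psi^\ast_{w_0z}=\psi_{w_0}$ to collapse the inner strings, read off non-vanishing from the reduction $F'_{z_1,z_2}\equiv(-1)^{n(n-1)/2}\psi^{\blam_{\minum}}_{w_0z_1,z_2}$ modulo $(\HH_{\ell,n}^{(0)})^{>\blam_{\minum}}$, and translate the length-additivity condition from Lemma \ref{2usefulProperties}(1) into the weak Bruhat order statement for the vanishing clause. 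Your write-up is slightly cleaner in naming $(\star)$ explicitly and in replacing the paper's contrapositive argument in the vanishing case by a direct equivalence, but the mathematics is the same. One side remark: your (correct) degree bookkeeping gives $\deg F'_{z_1,z_2}=\deg\psi^{\blam_{\minum}}_{w_0z_1,z_2}=2\ell(z_1)-2\ell(z_2)$; the extra $-n(n-1)$ in the corollary's printed degree formula is evidently a typo propagated from Corollary \ref{matrixUnit1} (it would make each $\wf_{w_i,w_i}$ have nonzero degree, contradicting $1=\sum_i\wf_{w_i,w_i}$ in Theorem \ref{mainthm0}).
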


\begin{proof} By Lemma \ref{1relation} and (\ref{dominanceBigger}), we have that \begin{equation}\label{fprime}
F'_{z_1,z_2}\equiv (-1)^{(n-1)n/2}\psi_{w_0z_1,z_2}^{\blam_{\minum}}\pmod{(\HH_{\ell,n}^{(0)})^{>\blam_{\minum}}}.
\end{equation}
In particular, this implies that $F'_{z_1,z_2}\neq 0$ by the cellular structure of $\HH_{\ell,n}^{(0)}$. By definition, it is clear that  $F'_{z_1,z_2}$ is a homogeneous element of degree $2\ell(z_1)-2\ell(z_2)$.

Again by Lemma \ref{1relation} and Lemma \ref{2usefulProperties}, we have that $$\begin{aligned}
(F'_{z_1,z_1})^2&=\psi_{w_0z_1}^{\ast}y_1^{n-1}y_2^{n-2}\cdots y_{n-1}\psi_{w_0}y_1^{n-1}y_2^{n-2}\cdots y_{n-1}(\psi_{z_1}\psi_{w_0z_1}^{\ast})y_1^{n-1}y_2^{n-2}\cdots y_{n-1}\psi_{w_0}y_1^{n-1}y_2^{n-2}\cdots y_{n-1}\psi_{z_1}\\
&=\psi_{w_0z_1}^{\ast}y_1^{n-1}y_2^{n-2}\cdots y_{n-1}\bigl(\psi_{w_0}y_1^{n-1}y_2^{n-2}\cdots y_{n-1}\psi_{w_0}\bigr)y_1^{n-1}y_2^{n-2}\cdots y_{n-1}\psi_{w_0}y_1^{n-1}y_2^{n-2}\cdots y_{n-1}\psi_{z_1}\\
&=(-1)^{(n-1)n/2}\psi_{w_0z_1}^{\ast}y_1^{n-1}y_2^{n-2}\cdots y_{n-1}\bigl(\psi_{w_0}y_1^{n-1}y_2^{n-2}\cdots y_{n-1}\psi_{w_0}\bigr)y_1^{n-1}y_2^{n-2}\cdots y_{n-1}\psi_{z_1}\\
&=F'_{z_1,z_1} .
\end{aligned}
$$
A similar argument also shows that $F'_{z_1,z_2}=F'_{z_1,z_1}F'_{z_1,z_2}=F'_{z_1,z_2}F'_{z_2,z_2}$ and $F'_{z_1,z_2}F'_{z_2,u_2}=F'_{z_1,u_2}$.

Finally, let $u_1,u_2\in\Sym_n$ such that $z_2^{-1}\nsucceq u_1^{-1}$. We have that $$\begin{aligned}
F'_{z_1,z_2}F'_{u_1,u_2}&=\psi_{w_0z_1}^{\ast}y_1^{n-1}y_2^{n-2}\cdots y_{n-1}\psi_{w_0}y_1^{n-1}y_2^{n-2}\cdots y_{n-1}(\psi_{z_2}\psi_{w_0u_1}^{\ast})y_1^{n-1}y_2^{n-2}\cdots y_{n-1}\psi_{w_0}\\
&\qquad y_1^{n-1}y_2^{n-2}\cdots y_{n-1}\psi_{u_2}.
\end{aligned}
$$
Note that the assumption $z_2^{-1}\nsucceq u_1^{-1}$ implies that $\ell(z_2u_1^{-1}w_0^{-1})\neq \ell(z_2)+\ell(u_1^{-1}w_0^{-1})$ because otherwise we would have some $x\in\Sym_n$ such that $xz_2=u_1$ and $$
\ell(x)=\ell(w_0)-\ell(z_2u_1^{-1}w_0^{-1})=\ell(w_0)-(\ell(z_2)+\ell(u_1^{-1}w_0^{-1}))=\ell(w_0)-\ell(z_2)-(\ell(w_0)-\ell(u_1^{-1}))=\ell(u_1)-\ell(z_2).
$$

By Lemma \ref{2usefulProperties}, $\ell(z_2u_1^{-1}w_0^{-1})\neq \ell(z_2)+\ell(u^{-1}w_0^{-1})$ implies that $\psi_{z_2}\psi_{w_0u_1}^{\ast}=0$. We thus proved that
$F'_{z_1,z_2}F'_{u_1,u_2}=0$ as required.
\end{proof}

\begin{dfn} \label{wf} We fix a total order on $\Sym_n$ and list the elements in $\Sym_n$ as $1=w_1,w_2,\cdots,w_{n!}$ such that $$
\text{$w_i^{-1}\succ w_j^{-1}$ only if $i<j$.}
$$
We define a set of elements $\{\wf_{w_i,w_j}|1\leq i,j\leq n!\}$ in $\HH_{\ell,n}^{(0)}$ inductively as follows: $$
\wf_{w_1,w_j}=\wf_{1,w_j}:=F'_{1,w_j},\quad\forall\,1\leq j\leq n! .
$$
Suppose that $\wf_{w_k,w_j}$ was already defined for any $1\leq k<i$ and $1\leq j\leq n!$. Then we define $$
\wf_{w_i,w_j}:=F'_{w_i,w_j}-\sum_{1\leq k<i}\wf_{w_k,w_k}F'_{w_i,w_j},\quad\forall\,1\leq j\leq n! .
$$
\end{dfn}
By construction and Corollary \ref{fprimecor}, we see that \begin{equation}\label{Aproperty}
\wf_{w_i,w_j}F'_{w_j,w_a}=\wf_{w_i,w_a},\,\,\forall\,1\leq a\leq n! .
\end{equation}

\begin{thm} \label{mainthm0} For any $1\leq i,j\leq n!$, we have that $\wf_{w_i,w_j}\neq 0$ is a homogeneous element of degree $2\ell(w_i)-2\ell(w_j)$ and \begin{equation}\label{matrixunit2}
\wf_{w_i,w_j}\wf_{w_k,w_l}=\delta_{j,k}\wf_{w_i,w_l} ,\quad\forall\, 1\leq k,l\leq n! .
\end{equation}
Moreover, for each $1\leq i\leq n!$, $\wf_{w_i,w_i}\HH_{\ell,n}^{(0)}\cong P_0$ as an ungraded right $\HH_{\ell,n}^{(0)}$-module, $1=\sum_{i=1}^{n!}\wf_{w_i,w_i}$, and
$\{\wf_{w_i,w_i}|1\leq i\leq n!\}$ is a complete set of pairwise orthogonal primitive idempotents of $\HH_{\ell,n}^{(0)}$.	
\end{thm}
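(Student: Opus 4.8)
The plan is to treat the elements $\wf_{w_i,w_j}$ as a Gram--Schmidt-type orthogonalisation of the ``near matrix units'' $F'_{z_1,z_2}$ of Corollary \ref{fprimecor}, and to run the whole argument inside the ``upper triangular'' part of the $F'$-calculus. Two observations set this up. First, the total order on $\Sym_n$ fixed in Definition \ref{wf}, together with Corollary \ref{fprimecor}, gives the clean vanishing
\[
F'_{w_a,w_b}F'_{w_c,w_d}=0\qquad\text{whenever }b>c,
\]
since $b>c$ forces $w_b^{-1}\nsucceq w_c^{-1}$. Second, unravelling Definition \ref{wf} gives $\wf_{w_i,w_j}=(1-E_i)F'_{w_i,w_j}$, where $E_i:=\sum_{k<i}\wf_{w_k,w_k}$ (so $E_1=0$), while \eqref{Aproperty} yields $\wf_{w_i,w_i}F'_{w_i,w_j}=\wf_{w_i,w_j}$, and in particular $\wf_{w_i,w_i}F'_{w_i,w_i}=\wf_{w_i,w_i}$. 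That each $\wf_{w_i,w_j}$ is homogeneous of the asserted degree is then a one-line induction on $i$ from Definition \ref{wf} and Corollary \ref{fprimecor}.

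The technical core is the identity $F'_{w_i,w_j}\wf_{w_p,w_p}=0$, valid for all $i,j$ and all $p<j$, which I would prove by a nested induction on $p$ using nothing but the clean vanishing: from Definition \ref{wf},
\[
F'_{w_i,w_j}\wf_{w_p,w_p}=F'_{w_i,w_j}F'_{w_p,w_p}-\sum_{q<p}F'_{w_i,w_j}\wf_{w_q,w_q}F'_{w_p,w_p},
\]
where the first term vanishes because $p<j$ and each summand vanishes by the inductive hypothesis because $q<p<j$. Granting this, one gets at once $\wf_{w_i,w_i}\wf_{w_m,w_m}=\wf_{w_i,w_i}F'_{w_i,w_i}\wf_{w_m,w_m}=0$ for $m<i$, and hence $\wf_{w_i,w_i}^2=\wf_{w_i,w_i}F'_{w_i,w_i}-\sum_{m<i}\wf_{w_i,w_i}\wf_{w_m,w_m}F'_{w_i,w_i}=\wf_{w_i,w_i}$ for every $i$; the reverse orthogonality $\wf_{w_m,w_m}\wf_{w_i,w_i}=0$ ($m<i$) then follows by a short induction on $i$, since once $\wf_{w_1,w_1},\dots,\wf_{w_{i-1},w_{i-1}}$ are known to be pairwise orthogonal idempotents one has $\wf_{w_m,w_m}E_i=\wf_{w_m,w_m}$ for $m<i$, and the two occurrences of $\wf_{w_m,w_m}F'_{w_i,w_i}$ in $\wf_{w_m,w_m}\wf_{w_i,w_i}=\wf_{w_m,w_m}F'_{w_i,w_i}-\wf_{w_m,w_m}E_iF'_{w_i,w_i}$ cancel. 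Thus $\{\wf_{w_i,w_i}\}_{i=1}^{n!}$ are pairwise orthogonal idempotents. For the full relation \eqref{matrixunit2}, the case $j=k$ follows from $\wf_{w_i,w_j}=\wf_{w_i,w_i}F'_{w_i,w_j}$, the consequence $F'_{w_i,w_j}\wf_{w_j,w_j}=F'_{w_i,w_j}$ of the core identity, and $F'_{w_i,w_j}F'_{w_j,w_l}=F'_{w_i,w_l}$ together with \eqref{Aproperty}, giving $\wf_{w_i,w_j}\wf_{w_j,w_l}=\wf_{w_i,w_l}$; the case $j\neq k$ is then immediate, as $\wf_{w_i,w_j}\wf_{w_k,w_l}=\wf_{w_i,w_j}\bigl(\wf_{w_j,w_j}\wf_{w_k,w_k}\bigr)\wf_{w_k,w_l}=0$.

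It remains to establish non-vanishing and the statements about idempotents and projective modules. Since $\wf_{w_1,w_1}=F'_{1,w_1}=F'_{w_1,w_1}\neq0$ by Corollary \ref{fprimecor}, and since $\wf_{w_i,w_i}=0$ for some $i$ would force $\wf_{w_i,w_j}=\wf_{w_i,w_i}F'_{w_i,w_j}=0$ for all $j$ and hence $\wf_{w_j,w_j}=\wf_{w_j,w_i}\wf_{w_i,w_j}=0$ for all $j$ (in particular $\wf_{w_1,w_1}=0$, a contradiction), every $\wf_{w_i,w_i}$ is non-zero, and then $\wf_{w_i,w_j}\neq0$ because $\wf_{w_i,w_j}\wf_{w_j,w_i}=\wf_{w_i,w_i}$. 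As $\HH_{\ell,n}^{(0)}$ has a unique simple module $D_0$, and hence a unique indecomposable projective $P_0$, each non-zero idempotent gives $\wf_{w_i,w_i}\HH_{\ell,n}^{(0)}\cong P_0^{\oplus r_i}$ with $r_i\geq1$; by orthogonality $\bigoplus_{i=1}^{n!}\wf_{w_i,w_i}\HH_{\ell,n}^{(0)}$ embeds in $\HH_{\ell,n}^{(0)}$, so $\bigl(\sum_i r_i\bigr)\dim_K P_0\leq\dim_K\HH_{\ell,n}^{(0)}=n!\dim_K P_0$ (the last equality because $\HH_{\ell,n}^{(0)}\cong P_0^{\oplus n!}$ by Lemma \ref{bili}), which forces every $r_i=1$. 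Hence each $\wf_{w_i,w_i}$ is primitive with $\wf_{w_i,w_i}\HH_{\ell,n}^{(0)}\cong P_0$, the direct sum exhausts $\HH_{\ell,n}^{(0)}$, and therefore $\sum_{i=1}^{n!}\wf_{w_i,w_i}=1$ and $\{\wf_{w_i,w_i}\}_{i=1}^{n!}$ is a complete set of pairwise orthogonal primitive idempotents.

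The step needing the most care is the choreography of the nested inductions in the middle paragraph: everything depends on never leaving the region $b\geq c$ of a product $F'_{w_a,w_b}F'_{w_c,w_d}$, where Corollary \ref{fprimecor} dictates the answer, and on invoking the order-compatibility of the listing $w_1,\dots,w_{n!}$ at precisely the right places. All the genuinely algebraic content is already packaged in Corollary \ref{fprimecor} and \eqref{Aproperty}.
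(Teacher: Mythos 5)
Your proof is correct, and it takes a genuinely different route from the paper's. The paper establishes non-vanishing of $\wf_{w_i,w_j}$ first, by computing its leading term modulo $(\HH_{\ell,n}^{(0)})^{>\blam_{\minum}}$ via the cellular structure (relation~(\ref{leadingterm2})), and then proves~(\ref{matrixunit2}) by a single induction on $k$ in which the inductive hypothesis is invoked at two different spots in the $j<m$ case. You instead extract from Corollary~\ref{fprimecor} and Definition~\ref{wf} the purely order-theoretic vanishing $F'_{w_a,w_b}F'_{w_c,w_d}=0$ for $b>c$, isolate the identity $F'_{w_i,w_j}\wf_{w_p,w_p}=0$ for $p<j$ as the single load-bearing lemma, and then derive the matrix unit relations from it in a way that never touches the graded cellular filtration again. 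Non-vanishing is then obtained as a consequence of~(\ref{matrixunit2}) together with $\wf_{w_1,w_1}=F'_{1,1}\neq 0$, rather than as an independent input. This buys you a proof that is more self-contained: the cellular structure is used only through Corollary~\ref{fprimecor}, not directly. The final idempotent and projective-module assertions are established by a dimension count against $\dim_K\HH_{\ell,n}^{(0)}=n!\,\dim_K P_0$, which is essentially the same Krull--Schmidt argument as in the paper, just phrased in terms of multiplicities $r_i$. One small point of bookkeeping worth being explicit about: the chain of inductions (pairwise orthogonality in both directions, then idempotency) needs both orthogonality directions for indices $<i$ before the reverse orthogonality $\wf_{w_m,w_m}\wf_{w_i,w_i}=0$ can be concluded, which your argument does provide via the core identity (giving the ``forward'' direction outright) plus the inner induction on $i$ for the ``reverse'' direction -- but a reader would benefit from seeing that dependency spelled out rather than folded into ``a short induction on $i$.''
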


\begin{proof} By (\ref{fprime}), for any $u\in\Sym_n$ with $u^{-1}\succ w_1^{-1}$, $$\begin{aligned}
F'_{u,u}F'_{w_1,w_2}&\equiv \psi_{w_0u,u}^{\blam_{\minum}}\psi_{w_0w_1,w_2}^{\blam_{\minum}}\pmod{(\HH_{\ell,n}^{(0)})^{>\blam_{\minum}}}	\\
&\equiv \psi_{w_0u}^{\ast}y_1^{n-1}y_2^{n-2}\cdots y_{n-1}\bigl(\psi_{u}(\psi_{w_0w_1})^{\ast}\bigr)y_1^{n-1}y_2^{n-2}\cdots y_{n-1}\psi_{w_2}\pmod{(\HH_{\ell,n}^{(0)})^{>\blam_{\minum}}}\\
&\equiv \psi_{w_0u}^{\ast}y_1^{n-1}y_2^{n-2}\cdots y_{n-1}\psi_{uw_1^{-1}w_0} y_1^{n-1}y_2^{n-2}\cdots y_{n-1}\psi_{w_2}\pmod{(\HH_{\ell,n}^{(0)})^{>\blam_{\minum}}}\\
&\equiv \sum_{j=1}^{n}r_j \psi_{w_0u}^{\ast}y_1^{n-1}y_2^{n-2}\cdots y_{n-1}y_jh_j\psi_{w_2}\pmod{(\HH_{\ell,n}^{(0)})^{>\blam_{\minum}}}\\
&\equiv 0\pmod{(\HH_{\ell,n}^{(0)})^{>\blam_{\minum}}},	\end{aligned} $$
where $r_j\in K$, $h_j\in\HH_{\ell,n}^{(0)}$ for any $z,j$. Combining this with Corollary \ref{fprimecor} and (\ref{fprime}) we can deduce that \begin{equation}\label{leadingterm2}
\wf_{w_i,w_j}\equiv(-1)^{(n-1)n/2}\psi_{w_0w_i,w_j}^{\blam_{\minum}}\pmod{(\HH_{\ell,n}^{(0)})^{>\blam_{\minum}}}.
\end{equation}
In particular,  $\wf_{w_i,w_j}\neq 0$. By definition, Corollary \ref{fprimecor} and an easy induction, it is easy to see that $\wf_{w_i,w_j}$ is a homogeneous element of degree $2\ell(w_i)-2\ell(w_j)$.

We are going to prove (\ref{matrixunit2}). We use induction on $k$. Suppose that $k=1$. If $j\neq 1$, then $j>1$. By construction, $$
\wf_{w_i,w_j}\in\sum_{w\in\Sym_n}\HH_{\ell,n}^{(0)}F'_{w,w_j},\,\,\,\wf_{1,w_l}=F'_{1,w_l} .
$$
By Corollary \ref{fprimecor}, we have that $F'_{w,w_j}F'_{1,u}=0$. It follows that $\wf_{w_i,w_j}\wf_{w_1,w_l}=0$. If $j=1$, then by (\ref{Aproperty}) we have that $$
\wf_{w_i,w_1}\wf_{w_1,w_l}=\wf_{w_i,1}F'_{1,w_l}=\wf_{w_i,w_l} .
$$
as required.

In general, suppose that (\ref{matrixunit2}) holds for any $k<m$. Let us consider the case when $k=m$. By construction, we have that
 $$
 \wf_{w_i,w_j}\in\sum_{w\in\Sym_n}\HH_{\ell,n}^{(0)}F'_{w,w_j},\,\,\,\wf_{w_m,w_l}\in\sum_{\substack{u\in\Sym_n\\ 1\leq a\leq m}}F'_{w_a,u}\HH_{\ell,n}^{(0)} .
 $$
Therefore, if $j>m$ then $\wf_{w_i,w_j}\wf_{w_m,w_l}=0$ by Corollary \ref{fprimecor}.

Suppose that $j<m$. Then $$\begin{aligned}
\wf_{w_i,w_j}\wf_{w_m,w_l}&=\wf_{w_i,w_j}\bigl(F'_{w_m,w_l}-\sum_{1\leq k<m}\wf_{w_k,w_k}F'_{w_m,w_l}\bigr)\\
&=\wf_{w_i,w_j}\bigl(F'_{w_m,w_l}-\sum_{1\leq k<m}\delta_{k,j}\wf_{w_k,w_k}F'_{w_m,w_l}\bigr)\\
&=\wf_{w_i,w_j}F'_{w_m,w_l}-\wf_{w_i,w_j}F'_{w_m,w_l}\\
&=0,\end{aligned}
$$
as required, where we have used induction hypothesis in the second and the third equalities.

Suppose that $j=m$. Then $$\begin{aligned}
\wf_{w_i,w_m}\wf_{w_m,w_l}&=\wf_{w_i,w_m}\bigl(F'_{w_m,w_l}-\sum_{1\leq k<m}\wf_{w_k,w_k}F'_{w_m,w_l}\bigr)\\
&=\wf_{w_i,w_m}F'_{w_m,w_l}-\sum_{1\leq k<m}\wf_{w_i,w_m}\wf_{w_k,w_k}F'_{w_m,w_l}\\
&=\wf_{w_i,w_m}F'_{w_m,w_l}-0=\wf_{w_i,w_l},\end{aligned}
$$
as required, where we used (\ref{Aproperty}) in the last equality, and used the induction hypothesis in the second last equality.

Since $$P_0^{\oplus\dim D_0}=P_0^{\oplus n!}\cong\HH_{\ell,n}^{(0)}\cong\bigl(1-\sum_{w\in\Sym_n}\wf_{w,w}\bigr)\HH_{\ell,n}^{(0)}\bigoplus\Bigl(\oplus_{w\in\Sym_n}\wf_{w,w}\HH_{\ell,n}^{(0)}\Bigr),
$$
and $\wf_{w,w}\HH_{\ell,n}^{(0)}\neq 0$ for each $w\in\Sym_n$. By Krull-Schmidt theorem we can deduce that for each $w\in\Sym_n$, $F_{w,w}\HH_{\ell,n}^{(0)}\cong P_0$ as ungraded right $\HH_{\ell,n}^{(0)}$-module and $1=\sum_{w\in\Sym_n}\wf_{w,w}$.	In other words,
$\{\wf_{w_i,w_i}|1\leq i\leq n!\}$ is a complete set of pairwise orthogonal primitive idempotents of $\HH_{\ell,n}^{(0)}$.	This completes the proof of the theorem.
\end{proof}

The following result was first conjectured by A. Mathas \cite[\S2.5, before Corollary 2.5.2]{Mathas:Singapore} in the special case when $\ell=n$.

\begin{thm} \label{mainthm1} The elements in the following set \begin{equation}\label{AKbasis}
\bigl\{\psi_wy_1^{a_1}\cdots y_n^{a_n}\bigm|0\leq a_i\leq \ell-i,\,\,\forall\,1\leq i\leq n, w\in\Sym_n\bigr\}
\end{equation}
form a $K$-basis of $\HH_{\ell,n}^{(0)}$.
\end{thm}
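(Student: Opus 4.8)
The strategy is a standard dimension-count combined with a spanning argument. We already know from the graded cellular basis \eqref{HMcellular} that $\dim_K \HH_{\ell,n}^{(0)} = \ell^n n!$. The proposed set \eqref{AKbasis} has exactly $n! \cdot \prod_{i=1}^n (\ell-i+1) = n! \cdot \ell(\ell-1)\cdots(\ell-n+1)$ elements — but wait, that is not $\ell^n n!$. So the count must instead be: the number of tuples $(a_1,\dots,a_n)$ with $0\le a_i\le \ell-i$ is $\prod_{i=1}^n(\ell-i+1) = \ell!/(\ell-n)!$, which equals $\binom{\ell}{n} n!$, not $\ell^n n!$. Hence I first need to recheck: the correct dimension comparison must show that \eqref{AKbasis} has the right size, so the plan is to verify that $\prod_{i=1}^n(\ell-i+1)\cdot n! = \ell^n n!$ is \emph{false} in general, meaning the monomial basis must actually be indexed differently — and indeed, re-reading, the exponents should range $0 \le a_i \le \ell - 1$ with the product being $\ell^n$; but the statement says $\le \ell - i$. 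Given the statement as written, the plan is: first confirm $\#\{(a_1,\dots,a_n): 0\le a_i\le \ell-i\} = \binom{\ell}{n}n!$ and reconcile with $\dim_K\HH_{\ell,n}^{(0)} = \ell^n n!$ — since $\binom{\ell}{n}n! \ne \ell^n n!$ unless I have miscounted, I would re-examine whether the bound is on the product or whether there is an implicit summation, and proceed from whichever reading makes the cardinality equal $\ell^n n!$. Once the cardinalities match, it suffices to prove that \eqref{AKbasis} spans $\HH_{\ell,n}^{(0)}$.

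For the spanning argument, the plan is to work by downward induction on the length $\ell(w)$ in the cellular basis element $\psi_w^\ast y_\blam \psi_u$. First I would straighten an arbitrary monomial in the $\psi$'s and $y$'s into the form $\psi_w y_1^{b_1}\cdots y_n^{b_n}$ modulo lower terms, using the commutation relations $\psi_r y_{r+1} = y_r\psi_r + 1$ and $y_{r+1}\psi_r = \psi_r y_r + 1$ from Definition~\ref{NilHecke} to move all $y$'s to the right past all $\psi$'s, picking up error terms of strictly shorter $\psi$-length. This reduces the problem to showing that any monomial $\psi_w y_1^{b_1}\cdots y_n^{b_n}$ with some $b_i > \ell - i$ lies in the span of those with all $b_i \le \ell - i$. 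Here is where Lemma~\ref{vanish1} and Lemma~\ref{HLlem0} are the crucial inputs: Lemma~\ref{HLlem0} gives the relations $\sum_{l_1+\cdots+l_s=\ell-s+1} y_1^{l_1}\cdots y_s^{l_s}=0$, which let me rewrite a top-degree monomial $y_1^{b_1}\cdots y_n^{b_n}$ with $b_i$ too large as a combination of monomials that are either of strictly smaller total degree or have a smaller "violation" in the sense of the partial order used in the proof of Lemma~\ref{vanish1}. An induction on the total degree $\sum b_i$, with the violations measured against the staircase $(\ell-1,\ell-2,\dots,\ell-n,\ast)$, then terminates.

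The main obstacle I anticipate is the bookkeeping in the straightening step: when commuting $y_j$ leftward past $\psi_w$ one does not simply get $\psi_w$ times a polynomial plus shorter terms — the Leibniz-type rule produces a divided-difference operator $\partial_r$ acting on the polynomial part, so the error terms involve $\psi_{w'}$ for $w' < w$ multiplied by polynomials whose exponents may have \emph{increased}. I would need to set up the induction on the pair (length of $w$, total $y$-degree) with the right lexicographic ordering so that the degree-raising in the polynomial part is compensated by the drop in $\psi$-length, and confirm that the process terminates because $\HH_{\ell,n}^{(0)}$ is finite-dimensional (equivalently, because $y_1^\ell = 0$ together with Lemma~\ref{vanish1} forces all sufficiently high monomials to vanish). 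Once spanning is established and the cardinality of \eqref{AKbasis} is confirmed to equal $\ell^n n! = \dim_K\HH_{\ell,n}^{(0)}$, linear independence is automatic and the proof is complete.
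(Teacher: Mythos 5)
Your proposal gets derailed by an error in the paper rather than a genuine gap in the math: the line ``$\dim_K\HH_{\ell,n}^{(0)}=\ell^n n!$'' appearing after \eqref{HMcellular} is a typo. The cellular basis \eqref{HMcellular} has exactly $\#\P_0\cdot(n!)^2=\binom{\ell}{n}(n!)^2$ elements (and this value, not $\ell^n n!$, is what the paper actually uses in Lemma~\ref{bili} and at the end of the proof of Theorem~\ref{mainthm1}); since
$$\#\bigl\{(a_1,\dots,a_n)\in\N^n : 0\le a_i\le\ell-i\bigr\}\cdot n! \;=\;\Bigl(\prod_{i=1}^{n}(\ell-i+1)\Bigr)\cdot n!\;=\;\frac{\ell!}{(\ell-n)!}\cdot n!\;=\;\binom{\ell}{n}(n!)^2,$$
the two cardinalities do match. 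You noticed the mismatch but left it as ``would re-examine'' instead of resolving it, which is the main defect in your write-up; once you fix the dimension to $\binom{\ell}{n}(n!)^2$ the count closes cleanly and linear independence follows from spanning.

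On spanning, your ``main obstacle'' (the straightening step, and the worry that moving $y$'s past $\psi$'s raises $y$-degrees) is not actually present and is not what the paper grapples with. The affine nilHecke algebra $\HH_n^{(0)}$ has the well-known standard (PBW-type) basis $\{\psi_w y_1^{a_1}\cdots y_n^{a_n} : w\in\Sym_n,\,a_i\in\N\}$; the defining relations $y_{r+1}\psi_r=\psi_r y_r+1$ and $y_r\psi_r=\psi_r y_{r+1}-1$ either preserve or decrease $y$-degree while strictly decreasing $\psi$-length in the correction term, so straightening terminates without any delicate bookkeeping. The paper simply takes this spanning family for granted and therefore works entirely inside the commutative polynomial part: its equation \eqref{step2} is a pure statement about the $y$'s, proved by an outer induction on the number $m$ of variables involved and an inner induction on the exponent $\omega_{m+1}$ of the last variable, using Lemma~\ref{HLlem0} to trade a too-large exponent $y_{m+1}^{\ell-m}$ for a signed sum of monomials with smaller $l_{m+1}$. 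Your proposed induction on the total degree $\sum b_i$ with ``violations against the staircase'' is a different bookkeeping device but would need some care to ensure the terms produced by Lemma~\ref{HLlem0} genuinely decrease your measure; the paper's nested induction avoids that issue. The core idea — use Lemma~\ref{HLlem0} to reduce the $y$-exponents below the staircase — is the same, and is correct.
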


\begin{proof} We first claim that for any $b_1,\cdots,b_{m-1},\omega_m\in\N$ with $0\leq b_j\leq l-j,\,\forall\,1\leq j \leq m$, \begin{equation}\label{step2}
y^{b_1}_{1}y^{b_2}_{2} \cdots y^{b_{m-1}}_{m-1}y^{\omega_m}_{m}=\sum_{\substack{c_1,\cdots,c_m\in\N\\
0\leq c_i\leq \ell-i, \forall\,1\leq i\leq m}}r_{c_1,\cdots,c_m}y^{c_1}_{1}y^{c_2}_{2} \cdots y^{c_{m-1}}_{m-1}y^{c_m}_{m}, \end{equation}
where $r_{c_1,\cdots,c_m}\in K$ for each $m$-tuple $(c_1,\cdots,c_m)$.

We use induction on $m$. If $m=1$, there is nothing to prove as $y_1^{\omega_1}=0$ whenever $\omega_1>\ell-1$. Suppose that (\ref{step2}) holds for any $1\leq k\leq m$.

We now consider the case where $k=m+1$. Applying Lemma \ref{HLlem0} for $s=m+1$, we get that
$$
y^{\ell-m}_{m+1}=-\sum_{\substack{l_1,\cdots,l_{m+1}\in\N\\ l_{m+1}\neq \ell-m, l_1+\cdots +l_{m+1}=\ell-m}} y^{l_1}_{1}y^{l_2}_{2}\cdots y^{l_{m+1}}_{m+1}.
$$

It follows that $$
\begin{aligned}
&\quad\,y^{b_1}_{1}y^{b_2}_{2} \cdots y^{b_{m-1}}_{m-1}y^{b_m}_{m}y^{\omega_{m+1}}_{m+1}  \\
&=y^{b_1}_{1}y^{b_2}_{2} \cdots y^{b_{m-1}}_{m-1}y^{b_m}_{m}y^{\omega_{m+1}-(\ell-m)}_{m+1}y^{\ell-m}_{m+1}  \\
&=-\sum_{\substack{l_1,\cdots,l_{m+1}\in\N\\ l_{m+1}\neq \ell-m, l_1+\cdots +l_{m+1}=\ell-m}}y^{b_1+l_1}_{1}y^{b_2+l_2}_{2} \cdots y^{b_{m-1}+l_{m-1}}_{m-1}y^{b_m+l_m}_{m}y^{b^{\prime}_{m+1}}_{m+1},
\end{aligned}
$$
where $b^{\prime}_{m+1}:=\omega_{m+1}-(l-m)+l_{m+1}$.

Our purpose is to show that \begin{equation}\label{step2a}
y^{b_1}_{1}y^{b_2}_{2} \cdots y^{b_{m}}_{m}y^{\omega_{m+1}}_{m+1}\in \text{$K$-Span}\bigl\{y^{c_1}_{1}y^{c_2}_{2} \cdots y^{c_{m}}_{m}y^{c_{m+1}}_{m+1}\bigm|
c_i\in\N, 0\leq c_i\leq \ell-i, \forall\,1\leq i\leq m+1\bigr\}. \end{equation}

We use induction on $\omega_{m+1}$. Suppose that for any $b_1,\cdots,b_m\in\N$ and any $0\leq b<\omega_{m+1}$, we have that $$
y^{b_1}_{1}y^{b_2}_{2} \cdots y^{b_{m}}_{m}y^{b}_{m+1}\in \text{$K$-Span}\bigl\{y^{c_1}_{1}y^{c_2}_{2} \cdots y^{c_{m}}_{m}y^{c_{m+1}}_{m+1}\bigm|
c_i\in\N, 0\leq c_i\leq \ell-i, \forall\,1\leq i\leq m+1\bigr\}.
$$
We are now going to prove (\ref{step2a}). If $b^{\prime}_{m+1}\leq l-m$, then by induction hypothesis we have that $$
y^{b_1+l_1}_{1}y^{b_2+l_2}_{2} \cdots y^{b_{m-1}+l_{m-1}}_{m-1}y^{b_m+l_m}_{m}\in \text{$K$-Span}\bigl\{y^{c_1}_{1}y^{c_2}_{2} \cdots y^{c_{m-1}}_{m-1}y^{c_{m}}_{m}\bigm|c_1,\cdots,c_m\in\N, 0\leq c_i\leq \ell-i, \forall\,1\leq i\leq m\bigr\}, $$
hence $$\begin{aligned}
&y^{b_1+l_1}_{1}y^{b_2+l_2}_{2} \cdots y^{b_{m-1}+l_{m-1}}_{m-1}y^{b_m+l_m}y^{b^{\prime}_{m+1}}_{m+1}\in \\
&\qquad\text{$K$-Span}\bigl\{y^{c_1}_{1}y^{c_2}_{2} \cdots y^{c_{m}}_{m}y^{c_{m+1}}_{m+1}\bigm|c_1,\cdots,c_{m+1}\in\N, 0\leq c_i\leq \ell-i, \forall\,1\leq i\leq m+1\bigr\}. \end{aligned}$$

Therefore, it remains to consider those terms which satisfy that $b^{\prime}_{m+1}>\ell-m$. Since $l_1+\cdots +l_{m+1}=\ell-m$ and $l_{m+1}\neq \ell-m$, we have
$0\leq l_{m+1}\leq \ell-m-1$, furthermore, we have $b^{\prime}_{m+1}\leq\omega_{m+1}-1$. By our induction hypothesis on $\omega_{m+1}$, we have that
$$
y^{b_1+l_1}_{1}y^{b_2+l_2}_{2} \cdots y^{b_{m-1}+l_{m-1}}_{m-1}y^{b_m+l_m}_{m}y^{b^{\prime}_{m+1}}_{m+1}\in\text{$K$-Span}\bigl\{y^{c_1}_{1}y^{c_2}_{2} \cdots y^{c_{m}}_{m}y^{c_{m+1}}_{m+1}\bigm|
c_i\in\N, 0\leq c_i\leq \ell-i, \forall\,1\leq i\leq m+1\bigr\}.
$$
Therefore, we can conclude that (\ref{step2a}) always holds. This completes the proof of (\ref{step2}).

Now we have proved that the elements in (\ref{AKbasis}) form a $K$-linear generator of $\HH_{\ell,n}^{(0)}$. Since the set (\ref{AKbasis}) has cardinality equal to $\bigl(^\ell_n\!\bigr)(n!)^2$, which is equal to the dimension of $\HH_{\ell,n}^{(0)}$, it follows that the elements in (\ref{AKbasis}) must form a $K$-basis of $\HH_{\ell,n}^{(0)}$. This completes the proof of the theorem.
\end{proof}

\begin{rem} We shall call the basis (\ref{AKbasis}) a \textbf{monomial basis} of $\HH_{\ell,n}^{(0)}$. It bears much resemblance to the Ariki--Koike basis of the cyclotomic Hecke algebra of type $G(\ell,1,n)$. For arbitrary cyclotomic quiver Hecke algebras the Question \ref{QA} on how to construct a monomial basis remains open. Anyhow, we regard Theorem \ref{mainthm1} as a first step in our effort of answering that open question.
\end{rem}

\bigskip
\section{A basis of the center}

The purpose of this section is to give an explicit basis of the center of $\HH_{\ell,n}^{(0)}$. Let $Z:=Z(\HH_{\ell,n}^{(0)})$ be the center of $\HH_{\ell,n}^{(0)}$.

\begin{dfn} For each $\bmu\in\P_0$, we define $$
b_{\bmu}:=\psi_{w_0}y_{\bmu}\psi_{w_0}y_1^{n-1}y_2^{n-2}\cdots y_{n-1} .	
$$
\end{dfn}

By Definition \ref{wf}, Corollary \ref{fprimecor}, Lemma \ref{1relation} and Lemma \ref{2usefulProperties}, we have that $$\begin{aligned}
\wf_{1,1}&=F'_{1,1}=\psi_{w_0}^\ast y_1^{n-1}y_2^{n-2}\cdots y_{n-1}\psi_{w_0}y_1^{n-1}y_2^{n-2}\cdots y_{n-1}\\
&=(\psi_{w_0}y_1^{n-1}y_2^{n-2}\cdots y_{n-1})\psi_{w_0}y_1^{n-1}y_2^{n-2}\cdots y_{n-1}\\
&=(-1)^{n(n-1)/2}\psi_{w_0}y_1^{n-1}y_2^{n-2}\cdots y_{n-1}=F_{1,1} .\end{aligned}
$$

Note that each $y_\bmu$ has a left factor $y_1^{n-1}y_2^{n-2}\cdots y_{n-1}$. It follows that $$
b_{\bmu}\in \wf_{1,1}\HH_{\ell,n}^{(0)} \wf_{1,1} \cong\End_{\HH_{\ell,n}^{(0)}}(\wf_{1,1}\HH_{\ell,n}^{(0)})\cong\End_{\HH_{\ell,n}^{(0)}}(P_0).
$$
Suppose further that $\theta(\bmu)=(k_1,\cdots,k_n)$, where $1\leq k_1<k_2<\cdots<k_n\leq\ell$. Then by (\ref{dominanceBigger}), $$\begin{aligned}
b_{\bmu}&=\psi_{w_0}y_1^{\ell-k_1}y_2^{\ell-k_2}\cdots y_{n}^{\ell-k_n}\psi_{w_0}y_1^{n-1}y_2^{n-2}\cdots y_{n-1}\\
&\equiv(-1)^{n(n-1)/2}\psi_{w_0}y_1^{\ell-k_1}y_2^{\ell-k_2}\cdots y_{n}^{\ell-k_n}\pmod{(\HH_{\ell,n}^{(0)})^{>\bmu}}\\
&\equiv(-1)^{n(n-1)/2}\psi_{w_0,1}^{\bmu}\pmod{(\HH_{\ell,n}^{(0)})^{>\bmu}}.
\end{aligned}
$$
It follows that $\{b_{\bmu}|\bmu\in\P_0\}$ are $K$-linearly independent elements in $\wf_{1,1}\HH_{\ell,n}^{(0)} \wf_{1,1}$.

\begin{lem} \label{dimCenter} The elements in $\{b_{\bmu}|\bmu\in\P_0\}$ form a $K$-basis of $\wf_{1,1}\HH_{\ell,n}^{(0)} \wf_{1,1}$. Moreover, the basic algebra
$\End_{\HH_{\ell,n}^{(0)}}(P_0)$ of $\HH_{\ell,n}^{(0)}$ is commutative and is isomorphic to the center $Z$ of $\HH_{\ell,n}^{(0)}$. In particular,
$\dim_K Z=\bigl(^\ell_n\!\bigr)$.
\end{lem}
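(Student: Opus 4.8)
The plan is to first establish that $\{b_\bmu\mid\bmu\in\Parts\}$ is a $K$-basis of $\wf_{1,1}\HH_{\ell,n}^{(0)}\wf_{1,1}$. We already know from the computation preceding the lemma that these elements are $K$-linearly independent in $\wf_{1,1}\HH_{\ell,n}^{(0)}\wf_{1,1}$. Since $\wf_{1,1}$ is a primitive idempotent with $\wf_{1,1}\HH_{\ell,n}^{(0)}\cong P_0$ (Theorem \ref{mainthm0}), we have $\wf_{1,1}\HH_{\ell,n}^{(0)}\wf_{1,1}\cong\End_{\HH_{\ell,n}^{(0)}}(P_0)$. A dimension count then finishes this part: the algebra $\HH_{\ell,n}^{(0)}$ is Morita equivalent to $\End_{\HH_{\ell,n}^{(0)}}(P_0)$ via the unique projective indecomposable $P_0$, and since $\HH_{\ell,n}^{(0)}\cong M_{n!\times n!}(\End_{\HH_{\ell,n}^{(0)}}(P_0))$ as the endomorphism ring of $P_0^{\oplus n!}$, comparing dimensions gives $\dim_K\End_{\HH_{\ell,n}^{(0)}}(P_0)=\dim_K\HH_{\ell,n}^{(0)}/(n!)^2=\binom{\ell}{n}=|\Parts|$. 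Hence the linearly independent family $\{b_\bmu\}$ spans, and is a basis.

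Next I would prove commutativity of $\wf_{1,1}\HH_{\ell,n}^{(0)}\wf_{1,1}$. The key point is that each $b_\bmu$ is, modulo the cellular ideal $(\HH_{\ell,n}^{(0)})^{>\bmu}$, a scalar multiple of $\psi_{w_0}y_1^{\ell-k_1}\cdots y_n^{\ell-k_n}$, i.e. of $\psi_{w_0}$ times a monomial in $y_1,\dots,y_n$. I would show that $b_\bmu$ can in fact be rewritten in the form $\wf_{1,1}\cdot z_\bmu$ where $z_\bmu$ is a \emph{symmetric} element of $\HH_{\ell,n}^{(0)}$ (a symmetric polynomial in $y_1,\dots,y_n$ applied and pushed into the idempotent), using Lemma \ref{1relation}: the identity $\psi_{w_0}y_1^{n-1}\cdots y_{n-1}\equiv(-1)^{n(n-1)/2}$ modulo the left ideal $\sum_i y_i\HH_{\ell,n}^{(0)}$ lets one absorb the ``straightening'' factor and replace a monomial $y^{\mathbf a}$ by the symmetrization $m_{\mathbf a}(y_1,\dots,y_n)$ up to terms in $\sum_i y_i\HH_{\ell,n}^{(0)}$, which are killed on the right by $y_1^{n-1}\cdots y_{n-1}$ when $\ell=n$ (Lemma \ref{vanish1}) and, in general, contribute only to the strictly-higher cellular filtration layers. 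Since symmetric elements are central by Corollary \ref{corcenter1}, the $z_\bmu$ commute with everything, so $b_\bmu b_\bnu=\wf_{1,1}z_\bmu\wf_{1,1}z_\bnu=\wf_{1,1}\wf_{1,1}z_\bmu z_\bnu=\wf_{1,1}z_\bmu z_\bnu=b_\bnu b_\bmu$. The cleanest route may be to argue directly on $\End_{\HH_{\ell,n}^{(0)}}(P_0)$: the natural algebra map $Z\to\End_{\HH_{\ell,n}^{(0)}}(P_0)$ (restriction of central elements) is injective because $P_0$ is a faithful module, and it is surjective by the dimension count $\dim_K Z=\binom{\ell}{n}=\dim_K\End_{\HH_{\ell,n}^{(0)}}(P_0)$, where $\dim_K Z=\binom{\ell}{n}$ is obtained from the general fact that for $A\cong M_m(B)$ one has $Z(A)\cong Z(B)$, here $B=\End_{\HH_{\ell,n}^{(0)}}(P_0)$ and $Z(A)=Z(M_m(B))\cong Z(B)$, combined with $B$ itself being commutative.

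To avoid circularity I would organize it as: (i) basis of $\wf_{1,1}\HH_{\ell,n}^{(0)}\wf_{1,1}$ and the isomorphism with $\End_{\HH_{\ell,n}^{(0)}}(P_0)$, giving $\dim_K\End_{\HH_{\ell,n}^{(0)}}(P_0)=\binom{\ell}{n}$; (ii) commutativity of $\End_{\HH_{\ell,n}^{(0)}}(P_0)$ via the symmetric-element argument above; (iii) then, since $\HH_{\ell,n}^{(0)}\cong M_{n!}\bigl(\End_{\HH_{\ell,n}^{(0)}}(P_0)\bigr)$ (immediate from Theorem \ref{mainthm0}, as $\{\wf_{w_i,w_j}\}$ are matrix units and $\wf_{w_i,w_i}\HH_{\ell,n}^{(0)}\cong P_0$), the center of a full matrix algebra over a commutative ring $B$ is $B$ itself, so $Z\cong\End_{\HH_{\ell,n}^{(0)}}(P_0)$ and $\dim_K Z=\binom{\ell}{n}$. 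I expect the main obstacle to be step (ii): showing rigorously that $b_\bmu$ agrees, inside $\wf_{1,1}\HH_{\ell,n}^{(0)}\wf_{1,1}$, with $\wf_{1,1}$ times a genuinely central (symmetric) element, rather than merely agreeing modulo the cellular ideal. This requires a careful induction on the dominance order $>$ on $\Parts$ together with Lemma \ref{1relation} and Lemma \ref{vanish1} to control the error terms; once one shows that every $b_\bmu$ lies in the image of $Z$ acting on $P_0$, commutativity is automatic, and the dimension match closes the argument.
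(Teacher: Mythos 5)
Your dimension count for the first part is fine and essentially matches the paper, though the paper derives $\dim_K\End_{\HH_{\ell,n}^{(0)}}(P_0)=\binom{\ell}{n}$ from $[P_0:D_0]=\binom{\ell}{n}$ (Lemma~\ref{bili} and Corollary~\ref{gradedcharacters}) rather than from $\dim\HH_{\ell,n}^{(0)}/(n!)^2$; both routes are legitimate given Theorem~\ref{mainthm0}.

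The commutativity step is where your proposal has a real gap, and you partly flag it yourself. Your route (a) --- showing $b_\bmu=\wf_{1,1}z_\bmu$ exactly for a symmetric, hence central, $z_\bmu$ --- is indeed true, but in the paper it is precisely the content of Lemma~\ref{keylem2}~1), whose proof uses Lemma~\ref{dimCenter} (via the isomorphism $Z\cong\End_{\HH_{\ell,n}^{(0)}}(P_0)$) and Theorem~\ref{mainthm2}. Proving it directly before Lemma~\ref{dimCenter}, with only Lemma~\ref{1relation} and Lemma~\ref{vanish1} in hand, is not just ``careful induction on dominance'': you would need to know in advance that there is a nonzero central homogeneous element of the right degree, which is exactly what $\dim_K Z=\binom{\ell}{n}$ buys you and what you are trying to establish. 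Your ``cleanest route'' (b) is circular as written: to conclude surjectivity of $Z\to\End_{\HH_{\ell,n}^{(0)}}(P_0)$ you invoke $\dim_K Z=\binom{\ell}{n}$, but your derivation of that equality uses $Z(M_{n!}(B))\cong Z(B)$ \emph{together with} $Z(B)=B$, i.e.\ commutativity of $B$ --- the thing being proved. Without commutativity you only get $Z\cong Z(B)$, whose dimension is a priori unknown and could be smaller than $\dim_K B$.

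The paper sidesteps all of this with a short, self-contained trick you did not anticipate. It reduces to $b_\bmu b_\bnu=b_\bnu b_\bmu$ and computes
$b_\bmu b_\bnu=(-1)^{n(n-1)/2}\psi_{w_0}\bigl(y_\bmu\psi_{w_0}y_\bnu\bigr)\psi_{w_0}y_1^{n-1}\cdots y_{n-1}$.
Setting $J_{1,1}:=\sum_{j<n}\psi_j\HH_{\ell,n}^{(0)}+\sum_{j<n}\HH_{\ell,n}^{(0)}\psi_j$, the graded cellular basis lets one write
$y_\bmu\psi_{w_0}y_\bnu\equiv\sum_{\brho}c_\brho y_\brho\pmod{J_{1,1}}$,
and the crucial observation is that the anti-involution $\ast$ fixes each $y_\brho$ and stabilises $J_{1,1}$, so the $\ast$-image $y_\bnu\psi_{w_0}y_\bmu$ has the \emph{same} expansion modulo $J_{1,1}$. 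Since $\psi_{w_0}J_{1,1}\psi_{w_0}=0$ by Lemma~\ref{2usefulProperties}~2), the $J_{1,1}$-error terms vanish under $\psi_{w_0}(\cdot)\psi_{w_0}$, giving $b_\bmu b_\bnu=b_\bnu b_\bmu$ directly, with no need to exhibit a central element realizing each $b_\bmu$. Once commutativity is known, the identification $Z\cong\End_{\HH_{\ell,n}^{(0)}}(P_0)$ follows from Morita equivalence (\cite[(3.54)(iv)]{CuR}) and the dimension equality drops out. You should replace your step (ii) with this $\ast$-symmetry argument; your steps (i) and (iii) are sound.
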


\begin{proof} Since $\#\P_0=\bigl(^\ell_n\!\bigr)$ and $\wf_{1,1}\HH_{\ell,n}^{(0)} \wf_{1,1}\cong\End_{\HH_{\ell,n}^{(0)}}(P_0)$, it suffices to show that
$\dim_K\End_{\HH_{\ell,n}^{(0)}}(P_0)=\bigl(^\ell_n\!\bigr)$. By Lemma \ref{bili} and Corollary \ref{gradedcharacters}, we know that $[P_0:D_0]=\bigl(^\ell_n\!\bigr)$ and hence $\dim_K\End_{\HH_{\ell,n}^{(0)}}(P_0)=\bigl(^\ell_n\!\bigr)$ as required. Thus the first part of the lemma follows from this together with the discussion in the paragraph above this lemma.

It remains to show  that the endomorphism algebra $\End_{\HH_{\ell,n}^{(0)}}(P_0)$ is commutative. Once this is proved, then as $\HH_{\ell,n}^{(0)}$ is Morita equivalent to $\End_{\HH_{\ell,n}^{(0)}}(P_0)$. It follows from \cite[(3.54)(iv)]{CuR} that $$
Z=Z(\HH_{\ell,n}^{(0)})\cong Z\bigl(\End_{\HH_{\ell,n}^{(0)}}(P_0)\bigr)=\End_{\HH_{\ell,n}^{(0)}}(P_0),
$$
as required.

In order to show that $\End_{\HH_{\ell,n}^{(0)}}(P_0)$ of $\HH_{\ell,n}^{(0)}$ is commutative, it suffices to show that $\wf_{1,1}\HH_{\ell,n}^{(0)} \wf_{1,1}$
is commutative. Furthermore, it is enough to show that $b_\bmu b_\bnu=b_\bnu b_\bmu$ for any $\bmu,\bnu\in\P_0$.

By definition, $$\begin{aligned}
&\quad\,b_\bmu b_\bnu=\psi_{w_0}y_{\bmu}\psi_{w_0}y_1^{n-1}y_2^{n-2}\cdots y_{n-1}\psi_{w_0}y_{\bnu}\psi_{w_0}y_1^{n-1}y_2^{n-2}\cdots y_{n-1}\\
&=(-1)^{n(n-1)/2}\psi_{w_0}\bigl(y_{\bmu}\psi_{w_0}y_{\bnu}\bigr)\psi_{w_0}y_1^{n-1}y_2^{n-2}\cdots y_{n-1} .
\end{aligned}
$$
We set $J_{1,1}:=\sum_{j=1}^{n-1}\psi_j\HH_{\ell,n}^{(0)}+\sum_{j=1}^{n-1}\HH_{\ell,n}^{(0)}\psi_j$. Using the graded cellular basis $\{\psi_{w,u}^\bmu|\bmu\in\P_0\}$ of $\HH_{\ell,n}^{(0)}$, we can write $$
y_{\bmu}\psi_{w_0}y_{\bnu}\equiv\sum_{\brho\in\P_0}c_{\brho}y_{\brho}\pmod{J_{1,1}} ,
$$
where $c_{\alpha}\in K$ for each $\alpha\in\P_0$. Applying the anti-involution ``$\ast$" on both sides of the above equality, we get that $$
y_{\bnu}\psi_{w_0}y_{\bmu}\equiv\sum_{\brho\in\P_0}c_{\brho}y_{\brho}\pmod{J_{1,1}} .
$$
Now using Lemma \ref{2usefulProperties} we can deduce  that $$
b_\bmu b_\bnu=(-1)^{n(n-1)/2}\sum_{\brho\in\P_0}c_{\brho}\psi_{w_0}y_{\brho}\psi_{w_0}y_1^{n-1}y_2^{n-2}\cdots y_{n-1}=b_\bnu b_\bmu,
$$
as required. This completes the proof of the lemma.
\end{proof}

\begin{dfn}\label{centerBasis} Let $\bmu\in\P_0$ with $\theta(\bmu)=(k_1,\cdots,k_n)$, where $1\leq k_1<k_2<\cdots<k_n\leq \ell$. Inside the quiver Hecke algebra $\HH_{n}^{(0)}$, we define $z(\bmu)\in K[y_1,\cdots,y_n]$ such that $$
y_1^{\ell-k_1}\cdots y_n^{\ell-k_n}\psi_{w_0}=z(\bmu)+\sum_{r=1}^{n-1}\psi_r h_r ,
$$
where $h_r\in\HH_n^{(0)}$ for each $1\leq r<n$. We define $$
z_{\bmu}:=\pi(z(\bmu))\in\HH_{\ell,n}^{(0)} .
$$
\end{dfn}

It is clear that $z_\bmu$ is a homogeneous element with degree $2\ell n-n(n-1)-2\sum_{i=1}^{n}k_i$.

\begin{lem} \label{center1} Let $\bmu\in\P_0$. Then $z(\bmu)$ is a symmetric polynomial in $y_1,\cdots,y_n$. In particular, $z(\bmu)$ lives inside the center of $\HH_n^{(0)}$ and hence $z_{\bmu}$ lives inside the center of $\HH_{\ell,n}^{(0)}$. Moreover,  $z(\blam_{\max})=(-1)^{n(n-1)/2}(y_1\cdots y_n)^{\ell-n}$ and $z(\blam_{\min})=(-1)^{n(n-1)/2}$.
\end{lem}

\begin{proof} It suffices to show that $z(\bmu)$ is symmetric in $y_r,y_{r+1}$ for each $1\leq r<n-1$. In fact, for any $1\leq r<n-1$ and $a,b\in\N$, if $a>b$ then $$
y_r^ay_{r+1}^b\psi_r=y_r^{a-b}(y_ry_{r+1})^b\psi_r=y_r^{a-b}\psi_r(y_ry_{r+1})^b\equiv -\Bigl(\sum_{k=0}^{a-b-1}y_{r}^{k}y_{r+1}^{a+b-1-k}\Bigr)(y_ry_{r+1})^b\pmod{\sum_{r=1}^{n-1}\psi_r\HH_n^{(0)}} ;
$$
if $a<b$, then $$
y_r^ay_{r+1}^b\psi_r=y_{r+1}^{b-a}(y_ry_{r+1})^a\psi_r=y_{r+1}^{b-a}\psi_r(y_ry_{r+1})^a\equiv \Bigl(\sum_{k=0}^{b-a-1}y_{r}^{k}y_{r+1}^{b-a-1-k}\Bigr)(y_ry_{r+1})^a\pmod{\sum_{r=1}^{n-1}\psi_r\HH_n^{(0)}} ;
$$
if $a=b$, then $y_r^ay_{r+1}^b\psi_r=(y_ry_{r+1})^a\psi_r=\psi_r(y_ry_{r+1})^a\in\sum_{r=1}^{n-1}\psi_r\HH_n^{(0)}$ . This implies that for any monomial
$y_1^{c_1}\cdots y_n^{c_n}\in\HH_n^{(0)}$, $$
y_1^{c_1}\cdots y_n^{c_n}\psi_r\equiv f_r(y_1,\cdots,y_n)\pmod{\sum_{r=1}^{n-1}\psi_r\HH_n^{(0)}},
$$
where $f_r(y_1,\cdots,y_n)\in K[y_1,\cdots,y_n]$ is symmetric in $y_r,y_{r+1}$.

Since for each $1\leq r<n$, $w_0$ has a reduced expression which ends with $s_r$ and the element $z(\bmu)$ is uniquely determined by $\bmu$ by Lemma \ref{nilHeckeCenter}, it follows that $z(\bmu)$ is symmetric in $y_r, y_{r+1}$ for any $1\leq r<n-1$. Hence $z(\bmu)$ is symmetric in $y_1,\cdots,y_n$. This completes the proof of the first part of the lemma. The second part of the lemma follows from Lemma \ref{1relation} and direct calculation.
\end{proof}

\begin{lem} \label{keylem2} 1) For each $\bmu\in\P_0$, we have  that $$
\psi_{w_0}y_\bmu\psi_{w_0}y_1^{n-1}\cdots y_{n-1}=\psi_{w_0}y_1^{n-1}\cdots y_{n-1}z_\bmu . $$ In particular, $$
\psi_{w_0}y_\bmu\equiv (-1)^{n(n-1)/2}\psi_{w_0}y_1^{n-1}y_2^{n-2}\cdots y_{n-1}z_\bmu\pmod{(\HH_{\ell,n}^{(0)})^{>\bmu}}.
$$

2) As a left $Z$-module, $P_0\cong Z^{\oplus n!}$. In particular, $P_0$ is a free $Z$-module of rank $n!$.
\end{lem}

\begin{proof} First, since $\HH_{\ell,n}^{(0)}\cong P_0^{\oplus n!}$, it follows that the center $Z$ must act faithfully on $P_0$. In other words, the left multiplication defines an injective homomorphism $\iota: Z\hookrightarrow\End_{\HH_{\ell,n}^{(0)}}(P_0)$. Comparing the dimensions of both sides, we can deduce that $\iota$ is an isomorphism. On the other hand, by Lemma \ref{dimCenter}, $$
0\neq b_\bmu\in\wf_{1,1}\HH_{\ell,n}^{(0)} \wf_{1,1}\cong\End_{\HH_{\ell,n}^{(0)}}(P_0) .
$$
It follows that there exists a unique nonzero homogeneous element $z'_\bmu$ with degree $2(\ell-k_1+\cdots+\ell-k_n)-(n-1)n$ such that \begin{equation}\label{keyprop2}
\psi_{w_0}y_\bmu\psi_{w_0}y_1^{n-1}\cdots y_{n-1}=z'_\bmu\psi_{w_0}y_1^{n-1}\cdots y_{n-1}
=\psi_{w_0}z'_\bmu y_1^{n-1}\cdots y_{n-1}=\psi_{w_0}y_1^{n-1}\cdots y_{n-1}z'_\bmu.
\end{equation}
By Lemma \ref{center1} and Lemma \ref{2usefulProperties}, we can see that $z'_\bmu=z_\bmu$. In particular, $z_\bmu\neq 0$.

Since $$\begin{aligned}
\psi_{w_0}y_\bmu\psi_{w_0}y_1^{n-1}\cdots y_{n-1}
&\equiv(-1)^{n(n-1)/2}\psi_{w_0}y_\bmu\equiv(-1)^{n(n-1)/2}\psi_{w_0,1}^{\bmu}\pmod{(\HH_{\ell,n}^{(0)})^{>\bmu}} ,
\end{aligned}
$$
it follows that $\psi_{w_0}y_\bmu\equiv (-1)^{n(n-1)/2}\psi_{w_0}y_1^{n-1}y_2^{n-2}\cdots y_{n-1}z_\bmu\pmod{(\HH_{\ell,n}^{(0)})^{>\bmu}}$. This proves 1).

Recall that $\wf_{1,1}=F'_{1,1}=(-1)^{n(n-1)/2}\psi_{w_0}y_1^{n-1}y_2^{n-2}\cdots y_{n-1}$. It follows from 1) that for any $\bmu\in\P_0$ and $w\in\Sym_n$, $$
\wf_{1,1}z_{\bmu}\psi_w\equiv\psi_{w_0,w}^{\bmu}\pmod{(\HH_{\ell,n}^{(0)})^{>\bmu}} .
$$
In particular, the elements in the set $\{\wf_{1,1}z_{\bmu}\psi_w|\bmu\in\P_0,w\in\Sym_n\}$ must be $K$-linearly independent. Since it has the cardinality
$\bigl(^\ell_n\!\bigr)n!$, we can deduce that it is a $K$-basis of the right $\HH_{\ell,n}^{(0)}$-module $P_0\cong\wf_{1,1}\HH_{\ell,n}^{(0)}$. Since $P_0$ is a faithful $Z$-module, it follows that for any $z\in Z$, $\wf_{1,1}z=0$ if and only if $z=0$. For each $w\in\Sym_n$, the subspace spanned by the basis elements in $\{\wf_{1,1}z_{\bmu}\psi_w|\bmu\in\P_0\}$ is an $Z$-submodule of $P_0$ which is isomorphic to $Z$. This proves that $P$ is a free $Z$-module with rank $n!$.
\end{proof}


\begin{thm} \label{mainthm2} The elements in the set $\{z_{\bmu}|\bmu\in\P_0\}$ form a $K$-basis of the center $Z:=Z(\HH_{\ell,n}^{(0)})$ of $\HH_{\ell,n}^{(0)}$. In particular, the center of $\HH_{\ell,n}^{(0)}$ is the set of symmetric polynomials in $y_1,\cdots,y_n$.
\end{thm}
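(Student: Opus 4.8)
The plan is to recognise each $z_\bmu$ as an ordinary monomial symmetric polynomial in the $y_i$'s and then to read off everything directly from the monomial basis of Theorem~\ref{mainthm1}. First I would fix $\bmu\in\P_0$ with $\theta(\bmu)=(k_1,\dots,k_n)$ and put $\lambda(\bmu):=(\ell-k_1-n+1,\ell-k_2-n+2,\dots,\ell-k_n)$; because $k_1<\dots<k_n$ the differences $(\ell-k_j-n+j)-(\ell-k_{j+1}-n+j-1)=k_{j+1}-k_j-1$ are $\ge 0$, so $\lambda(\bmu)$ is a partition with at most $n$ parts and largest part $\ell-k_1-n+1\le\ell-n$. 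Unwinding Definition~\ref{centerBasis}, the sum over minimal length left coset representatives of $\Sym(\bmu)$ (the stabiliser of the exponent vector $\lambda(\bmu)$ in $\Sym_n$) is precisely the sum of all pairwise distinct rearrangements of $y_1^{\lambda(\bmu)_1}\cdots y_n^{\lambda(\bmu)_n}$, i.e. $z_\bmu=m_{\lambda(\bmu)}(y_1,\dots,y_n)$, the monomial symmetric polynomial. In particular $z_\bmu$ is a symmetric polynomial in the $y$'s, hence central by Corollary~\ref{corcenter1}.

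Next I would prove linear independence of $\{z_\bmu\mid\bmu\in\P_0\}$. Every monomial $y_1^{\beta_1}\cdots y_n^{\beta_n}$ occurring in $z_\bmu=m_{\lambda(\bmu)}(y)$ is a rearrangement of $\lambda(\bmu)$, so $\beta_i\le\lambda(\bmu)_1\le\ell-n\le\ell-i$ for every $1\le i\le n$; therefore it is one of the monomial basis elements $y_1^{a_1}\cdots y_n^{a_n}$ (i.e. those of Theorem~\ref{mainthm1} with $w=1$). Thus $z_\bmu$ is, with no reduction needed, a $\{0,1\}$-linear combination of distinct monomial basis elements, supported exactly on the $\Sym_n$-orbit (under place permutation of exponents) of $\lambda(\bmu)$. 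Since $\bmu\mapsto\lambda(\bmu)$ is injective --- one recovers $k_j=\ell-n+j-\lambda(\bmu)_j$ --- distinct $\bmu$ give partitions in disjoint $\Sym_n$-orbits, so the $z_\bmu$ have pairwise disjoint supports with respect to the monomial basis. As that basis is $K$-linearly independent, $\{z_\bmu\mid\bmu\in\P_0\}$ is linearly independent and each $z_\bmu\ne 0$.

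Then I would invoke the dimension count. By Lemma~\ref{dimCenter}, $\dim_K Z=\binom{\ell}{n}=\#\P_0$, so the $\binom{\ell}{n}$ linearly independent central elements $z_\bmu$ already span $Z$ and form a $K$-basis. Because each $z_\bmu$ is the image of a symmetric polynomial and these span $Z$, while conversely every image of a symmetric polynomial is central by Corollary~\ref{corcenter1}, the centre $Z$ equals the image of $K[t_1,\dots,t_n]^{\Sym_n}$ in $\HH_{\ell,n}^{(0)}$, which is the ``symmetric polynomials'' assertion. Finally, each $z_\bmu$ is homogeneous of degree $2\ell n-n(n-1)-2\sum_i k_i$, so $\{z_\bmu\}$ is a homogeneous basis and the graded structure of $Z$ (its graded dimension $\sum_{\bmu\in\P_0}q^{\deg z_\bmu}$) can be read off from these degrees.

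I do not expect a serious obstacle: the real work has been done in Theorem~\ref{mainthm1} and Lemma~\ref{dimCenter}, and the only point requiring care --- the only place the standing hypothesis $\ell\ge n$ is genuinely used --- is the inequality $\lambda(\bmu)_1\le\ell-n$, which guarantees that \emph{all} rearrangements of $\lambda(\bmu)$ still lie inside the ``staircase box'' $\{a_i\le\ell-i\}$, so that $z_\bmu$ is literally a sum of monomial basis elements rather than an expression needing the reductions from the proof of Theorem~\ref{mainthm1}.
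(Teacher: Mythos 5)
Your argument for the basis claim is correct, and in one respect it is more careful than the paper's own proof. Both proofs turn on the same key observation --- the inequality $\ell-k_i-n+i\le\ell-n\le\ell-j$ (valid for all $i,j$ since $\ell\ge n$), which guarantees that every monomial appearing in $z_\bmu$ lies inside the staircase box $\{0\le a_i\le\ell-i\}$ of Theorem~\ref{mainthm1}, so that $z_\bmu$ is literally a sum of distinct monomial basis elements and in particular $z_\bmu\ne 0$ --- and both then invoke $\dim_K Z=\binom{\ell}{n}$ from Lemma~\ref{dimCenter}. The difference is in the linear-independence step. The paper asserts that the degrees $\deg z_\bmu=2\ell n-n(n-1)-2\sum_i k_i$ are pairwise distinct as $\bmu$ ranges over $\P_0$ and deduces independence from that; but this is false as soon as $n\ge 2$ and $\ell\ge n+2$. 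For instance with $\ell=4$, $n=2$ the choices $\theta(\bmu)=(1,4)$ and $\theta(\bmu)=(2,3)$ give $z_\bmu=y_1^2+y_2^2$ and $z_\bmu=y_1y_2$, both homogeneous of degree $4$ and visibly independent. Your argument --- that $\bmu\mapsto\lambda(\bmu)$ is injective into partitions fitting in the box, so the $z_\bmu=m_{\lambda(\bmu)}(y)$ have pairwise disjoint supports in the monomial basis --- is the correct way to get independence and in fact repairs this gap in the paper's proof.

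There is one point you should make explicit rather than gloss over. The final clause of the theorem, ``each homogeneous component of $Z$ is one-dimensional,'' is exactly the assertion that the degrees $\deg z_\bmu$ are pairwise distinct, and hence it is \emph{false} in general: in the $\ell=4$, $n=2$ example your own formula $\dim_q Z=\sum_{\bmu\in\P_0}q^{\deg z_\bmu}$ gives $1+q^2+2q^4+q^6+q^8$, in agreement with Corollary~\ref{gradedCartan}, so the degree-$4$ component is two-dimensional. Your proof correctly establishes that $\{z_\bmu\}$ is a basis of $Z$ and that $Z$ is the image of the symmetric polynomials --- the substantive content of the theorem --- but it does not (and cannot) establish the one-dimensionality clause. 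Since the paper also appeals to that clause later (in the proof of Lemma~\ref{Zsymmetric}), you should flag it as an error rather than silently skip it.
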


\begin{proof} Since the elements in $\{b_\bmu|\bmu\in\P_0\}$ are $K$-linearly independent, it follows that the elements in $\{z_{\bmu}|\bmu\in\P_0\}$ are $K$-linearly independent and hence form a $K$-basis of the center $Z:=Z(\HH_{\ell,n}^{(0)})$ by dimension consideration. By Lemma \ref{center1}, each $z_\bmu$ is a symmetric polynomial in $y_1,\cdots,y_n$, hence the center of $\HH_{\ell,n}^{(0)}$ is the set of symmetric polynomials in $y_1,\cdots,y_n$. This completes the proof of the theorem.
\end{proof}

The following proposition gives a generalization of Corollary \ref{matrixUnit1}. It can be regarded as a cyclotomic analogue of the results in \cite[Proposition 3.5]{Lau} and \cite[Theorem 4.5]{KLM}.

\begin{prop} \label{matrixIso} Let $\{E_{i,j}|1\leq i,j\leq n!\}$ be the matrix units of the full matrix algebra $M_{n!\times n!}(K)$. Then the map $$
E_{i,j}\otimes z\mapsto \wf_{w_i,w_j}z,\quad\forall\, 1\leq i,j\leq n!, z\in Z,
$$
extends linearly to a well-defined $K$-algebra isomorphism $\eta$ from $M_{n!\times n!}(K)\otimes _{K}Z$ onto $\HH_{\ell,n}^{(0)}$. In particular, $\HH_{\ell,n}^{(0)}\cong M_{n!\times n!}(Z)$.
\end{prop}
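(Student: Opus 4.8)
The plan is to exhibit $\eta$ explicitly, check it is a well-defined $K$-algebra homomorphism, and then deduce that it is bijective from a dimension count together with surjectivity. First I would define $\eta$: writing $a=\sum_{i,j}a_{ij}E_{i,j}\in M_{n!\times n!}(K)$ with $a_{ij}\in K$, the assignment $(a,z)\mapsto\sum_{i,j}a_{ij}\wf_{w_i,w_j}z$ is $K$-bilinear, so it factors through the tensor product and yields a $K$-linear map $\eta$ with $\eta(E_{i,j}\otimes z)=\wf_{w_i,w_j}z$ for all $i,j$ and all $z\in Z$. Next I would verify multiplicativity: since every element of $Z$ is central in $\HH_{\ell,n}^{(0)}$ and the $\wf_{w_i,w_j}$ are matrix units by Theorem \ref{mainthm0},
\[
\eta(E_{i,j}\otimes z)\,\eta(E_{k,l}\otimes z')=\wf_{w_i,w_j}z\,\wf_{w_k,w_l}z'=zz'\,\wf_{w_i,w_j}\wf_{w_k,w_l}=\delta_{j,k}\,zz'\,\wf_{w_i,w_l}=\eta\bigl((E_{i,j}\otimes z)(E_{k,l}\otimes z')\bigr),
\]
and $\eta\bigl(\sum_i E_{i,i}\otimes 1\bigr)=\sum_i\wf_{w_i,w_i}=1$, again by Theorem \ref{mainthm0}; hence $\eta$ is a homomorphism of $K$-algebras.

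For bijectivity, I would first record that by Theorem \ref{mainthm1} one has $\dim_K\HH_{\ell,n}^{(0)}=\binom{\ell}{n}(n!)^2$, while $\dim_K\bigl(M_{n!\times n!}(K)\otimes_K Z\bigr)=(n!)^2\dim_K Z=\binom{\ell}{n}(n!)^2$ by Lemma \ref{dimCenter}; both sides are finite-dimensional, so it suffices to prove $\eta$ is surjective. Since $\{\wf_{w_i,w_i}\}_{i=1}^{n!}$ is a complete set of pairwise orthogonal idempotents (Theorem \ref{mainthm0}), we have the Peirce decomposition $\HH_{\ell,n}^{(0)}=\bigoplus_{i,j}\wf_{w_i,w_i}\HH_{\ell,n}^{(0)}\wf_{w_j,w_j}$. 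By Lemma \ref{dimCenter} together with the faithfulness of $Z$ on $P_0\cong\wf_{1,1}\HH_{\ell,n}^{(0)}$ established in Lemma \ref{keylem2}, the $K$-linear map $Z\to\wf_{1,1}\HH_{\ell,n}^{(0)}\wf_{1,1}$, $z\mapsto\wf_{1,1}z=z\wf_{1,1}$, is injective between spaces of equal dimension $\binom{\ell}{n}$, hence bijective. Now for $x\in\wf_{w_i,w_i}\HH_{\ell,n}^{(0)}\wf_{w_j,w_j}$, the matrix-unit relations give $x=\wf_{w_i,w_i}x\wf_{w_j,w_j}=\wf_{w_i,1}\bigl(\wf_{1,w_i}x\wf_{w_j,1}\bigr)\wf_{1,w_j}$ with $\wf_{1,w_i}x\wf_{w_j,1}\in\wf_{1,1}\HH_{\ell,n}^{(0)}\wf_{1,1}$, so $\wf_{1,w_i}x\wf_{w_j,1}=\wf_{1,1}z$ for a unique $z\in Z$, and therefore $x=\wf_{w_i,1}(\wf_{1,1}z)\wf_{1,w_j}=\wf_{w_i,1}z\,\wf_{1,w_j}=z\,\wf_{w_i,w_j}=\eta(E_{i,j}\otimes z)$, where I have used $\wf_{w_i,1}\wf_{1,1}=\wf_{w_i,1}$, centrality of $z$, and $\wf_{w_i,1}\wf_{1,w_j}=\wf_{w_i,w_j}$. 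Summing over the Peirce components shows $\eta$ is onto, hence an isomorphism; composing with the standard identification $M_{n!\times n!}(K)\otimes_K Z\cong M_{n!\times n!}(Z)$ (valid for any commutative $K$-algebra $Z$) yields $\HH_{\ell,n}^{(0)}\cong M_{n!\times n!}(Z)$.

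I do not expect a genuine obstacle here: once Theorem \ref{mainthm0} and Lemmas \ref{dimCenter}--\ref{keylem2} are available, the argument is essentially bookkeeping with matrix units and the Peirce decomposition. The two points that require care are (i) matching the $Z$-module structure on $P_0$ with left multiplication by $\wf_{1,1}\HH_{\ell,n}^{(0)}\wf_{1,1}$ under the isomorphism $P_0\cong\wf_{1,1}\HH_{\ell,n}^{(0)}$ of Theorem \ref{mainthm0}, so that $z\mapsto\wf_{1,1}z$ is genuinely the bijection invoked above, and (ii) the repeated (but harmless) use of the centrality of $Z$ to commute elements of $Z$ past the $\wf$'s in the multiplicativity and surjectivity computations.
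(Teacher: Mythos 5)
Your proof is correct, but it takes a complementary route to the one in the paper. After the (identical) verification that $\eta$ is a unital $K$-algebra homomorphism and that both sides have dimension $\binom{\ell}{n}(n!)^2$, the paper finishes by proving \emph{injectivity}: starting from $\sum_{i,j}\wf_{w_i,w_j}z_{i,j}=0$, it multiplies on the left by $\wf_{w_j,w_i}$ and on the right by $\wf_{w_j,w_j}$ to isolate $\wf_{w_j,w_j}z_{i,j}=0$, and then invokes $\wf_{w_j,w_j}\HH_{\ell,n}^{(0)}\cong P_0$ together with the faithfulness of $Z$ on $P_0$ to conclude $z_{i,j}=0$. You instead prove \emph{surjectivity}, via the Peirce decomposition $\HH_{\ell,n}^{(0)}=\bigoplus_{i,j}\wf_{w_i,w_i}\HH_{\ell,n}^{(0)}\wf_{w_j,w_j}$, a dimension-counted bijection $Z\xrightarrow{\sim}\wf_{1,1}\HH_{\ell,n}^{(0)}\wf_{1,1}$ from Lemma \ref{dimCenter} and the same faithfulness fact, and conjugation by the matrix units $\wf_{w_i,1},\wf_{1,w_j}$ to move every Peirce corner into the $(1,1)$-corner. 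The two arguments are close in length and rest on the same inputs (Theorem \ref{mainthm0}, Lemma \ref{dimCenter}, Lemma \ref{keylem2}, and a dimension count); the injectivity route is a single computation that avoids the Peirce bookkeeping, while your surjectivity route has the mild virtue of making the preimage of an arbitrary element of $\HH_{\ell,n}^{(0)}$ explicit. Either is a perfectly good proof.
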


\begin{proof} In view of Theorem \ref{mainthm0}, it is clear that $\eta$ is a well-defined $K$-algebra homomorphism. By Lemma \ref{dimCenter}, it suffices to show that $\eta$ is an injective map.

Suppose that $\eta(x)=0$, where $x=\sum_{1\leq i,j\leq n!}E_{i,j}z_{i,j}$, where $z_{i,j}\in Z$ for each pair $(i,j)$.
Then $$
\sum_{1\leq i,j\leq n!}\wf_{w_i,w_j}z_{i,j}=\eta(x)=0 .
$$
For any pair $(i,j)$ with $1\leq i,j\leq n!$, left multiplying with $\wf_{w_j,w_i}$ and right multiplying with $\wf_{w_j,w_j}$ we get (by Theorem \ref{mainthm0}) that $$
\wf_{w_j,w_j}z_{i,j}=\sum_{1\leq k,l\leq n!}(\wf_{w_j,w_i}\wf_{w_k,w_l}\wf_{w_j,w_j})z_{k,l}\Bigr)=\wf_{w_j,w_i}\Bigl(\sum_{1\leq k,l\leq n!}\wf_{w_k,w_l}z_{k,l}\Bigr)\wf_{w_i,w_j}=0 .
$$
Since $\wf_{w_j,w_j}\HH_{\ell,n}^{(0)}\cong P_0$ as ungraded right $\HH_{\ell,n}^{(0)}$-module and $Z$ acts faithfully on $P_0$, it follows that $z_{i,j}=0$.
This proves that $x=0$ and hence $\eta$ is injective. Finally, comparing the dimensions of both sides, we see that $\eta$ is an isomorphism.
\end{proof}

\bigskip

\section{A homogeneous symmetrizing form on $\HH_{\ell,n}^{(0)}$}

By the work \cite{SVV} of Shan, Varagnolo and Vasserot, each cyclotomic quiver Hecke algebra can be endowed with a homogeneous symmetrizing form which makes it into a graded symmetric algebra (see Remark \ref{FinalRem} and \cite[Section 6.3]{HuMathas:GradedCellular} for the type $A$ case). In particular,
the nilHecke algebra $\HH_{\ell,n}^{(0)}$ is a graded symmetric algebra. However,  the SVV symmetrizing form $\Tr^{\text{SVV}}$ is defined in an inductive manner which relies on some deep results about certain decompositions of the cyclotomic quiver Hecke algebras which come from the biadjointness of the $i$-induction functors and $i$-restriction functors in the work \cite{KK} of Kang and Kashiwara and \cite{K} of Kashiwara. It is rather difficult to compute the explicit value of the form $\Tr^{\text{SVV}}$ on any specified homogeneous element in the cyclotomic quiver Hecke algebra  because its inductive definition involves some mysterious correspondence (i.e., $z\mapsto\widetilde{z}, \ell\mapsto\widetilde{\pi}_\ell$ in \cite[Theorem 3.8]{SVV}) whose explicit descriptions are not available. In this section, we shall introduce a new homogeneous symmetrizing form $\Tr$ such that the value of the form $\Tr$ on each graded cellular basis element of $\HH_{\ell,n}^{(0)}$ is explicitly given. We will prove in the next section that this form $\Tr$ actually coincides with Shan-Varagnolo-Vasserot's symmetrizing form $\Tr^{\text{SVV}}$ on $\HH_{\ell,n}^{(0)}$.

The following result seems to be well-known. We add a proof as we can not find a suitable reference.

\begin{lem} \label{SymMorita} Let $A, B$ be two finite dimensional (ungraded) $K$-algebras. Suppose that $B$ is Morita equivalent to $A$. Then there exists a $K$-linear map $\rho: A^\ast\rightarrow B^\ast$ such that for any symmetrizing form $\tau\in A^\ast$ on $A$, $\rho(\tau)\in B^\ast$ is a symmetrizing form on $B$. In particular, if $A$ is a symmetric algebra over $K$, then $B$ is a symmetric algebra over $K$ too.\end{lem}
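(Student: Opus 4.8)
The plan is to reduce everything to two elementary facts, using Morita's structure theorem. First I would invoke Morita's theorem: since $B$ is Morita equivalent to $A$, there are an integer $n\geq 1$, an idempotent $e\in M_n(A)$ with $M_n(A)eM_n(A)=M_n(A)$, and a $K$-algebra isomorphism $\varphi\colon B\xrightarrow{\ \sim\ }eM_n(A)e$; fix such data once and for all. It then suffices to establish: (i) if $\tau$ is a symmetrizing form on $A$, then $\tau_n:=\tau\circ\tr$ is a symmetrizing form on $M_n(A)$, where $\tr\colon M_n(A)\to A$ sends a matrix to the sum of its diagonal entries; and (ii) if $R$ is a finite dimensional $K$-algebra with symmetrizing form $\sigma$ and $e'\in R$ is an idempotent, then $\sigma|_{e'Re'}$ is a symmetrizing form on $e'Re'$. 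Granting these, I would define $\rho\colon A^\ast\to B^\ast$ by $\rho(\tau)(b):=\tau\bigl(\tr(\varphi(b))\bigr)$; this is visibly $K$-linear in $\tau$, and when $\tau$ is symmetrizing it is the restriction of $\tau_n$ to the corner $eM_n(A)e$ transported through $\varphi$, hence symmetrizing on $B$ by (i) and (ii). The final assertion of the lemma is then immediate.

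For step (i): from $(XY)_{ii}=\sum_j X_{ij}Y_{ji}$ one gets $\tau_n(XY)=\sum_{i,j}\tau(X_{ij}Y_{ji})$, and the symmetry $\tau(ab)=\tau(ba)$ on $A$ then yields $\tau_n(XY)=\tau_n(YX)$, so $\tau_n$ is an associative symmetric linear form. For nondegeneracy, given $X\neq 0$ pick indices $p,q$ with $X_{pq}\neq 0$ and $a\in A$ with $\tau(X_{pq}a)\neq 0$ (possible since $\tau$ is nondegenerate on $A$); taking $Y$ to be the matrix whose only nonzero entry is $Y_{qp}=a$ gives $\tau_n(XY)=\tau(X_{pq}a)\neq 0$. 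For step (ii): associativity and symmetry of $\sigma|_{e'Re'}$ are inherited from $\sigma$; for nondegeneracy, if $x=e'xe'\in e'Re'$ satisfies $\sigma(xy)=0$ for all $y\in e'Re'$, then for arbitrary $r\in R$ the cyclicity of $\sigma$ gives $\sigma(xr)=\sigma(e'xr)=\sigma(xre')=\sigma\bigl(x(e're')\bigr)=0$ (using $x=e'x=xe'$ and $e're'\in e'Re'$), whence $x=0$ by nondegeneracy of $\sigma$ on $R$.

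The only substantial ingredient here is Morita's structure theorem — that a Morita partner of $A$ is a full corner $eM_n(A)e$ — and the main thing to be careful about is getting the left/right module conventions in that theorem right; note also that fullness of $e$ is used only to know that $B$ has this shape, not in steps (i)--(ii), which hold for any idempotent. Everything else is routine linear algebra. As an alternative, one could bypass Morita's theorem and argue through the Morita-invariance of the zeroth Hochschild homology: the symmetrizing forms on $A$ are exactly the nondegenerate elements of $(A/[A,A])^\ast$, a Morita equivalence induces a $K$-linear isomorphism $A/[A,A]\cong B/[B,B]$ under which this nondegeneracy condition is preserved, and this produces $\rho$ directly; but spelling out that preservation is no shorter than the corner argument above.
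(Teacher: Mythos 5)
Your proof is correct and follows essentially the same route as the paper: realize $B$ (the paper uses $B^{\mathrm{op}}$, an immaterial convention difference) as a corner $eM_k(A)e$ of a matrix algebra via Morita theory, pull $\tau$ back along the matrix trace to get a symmetrizing form on $M_k(A)$, and then restrict to the corner; your steps (i) and (ii) simply spell out the verification that the paper dismisses as ``easy to check.''
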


\begin{proof} By assumption, $B^{\text{op}}\cong\End_A(P)$ for a finite dimensional (ungraded) projective left $A$-module $P$. Moreover, there exists a natural number $k$
such that $A^{\oplus k}\cong P\oplus P'$ as left $A$-modules. Let $e$ be the idempotent of $M_{k\times k}(A)$ which corresponds to the map $A^{\oplus k}\overset{\text{pr}}{\twoheadrightarrow} P\overset{\text{$\iota$}}{\hookrightarrow}A^{\oplus k}$. Then we have that
$B^{\text{op}}\cong\End_A\bigl(P\bigr)\cong eM_{k\times k}(A)e$.

We define $\rho_0: A^\ast\rightarrow\bigl(M_{k\times k}(A)\bigr)^\ast$ as follows: for any $f\in A^\ast$ and $(a_{i,j})_{k\times k}\in M_{k\times k}(A)$, $$
\rho_0(f)\Bigl((a_{i,j})_{k\times k}\Bigr):=f\bigl(\sum_{i=1}^{k}a_{ii}\bigr) .
$$
We also define $\text{res}: \bigl(M_{k\times k}(A)\bigr)^\ast\rightarrow\bigl(eM_{k\times k}(A)e\bigr)^\ast$ as follows: for any $f\in \bigl(M_{k\times k}(A)\bigr)^\ast$ and
$(a_{i,j})_{k\times k}\in M_{k\times k}(A)$, $$
\text{res}(f)\Bigl(e(a_{i,j})_{k\times k}e\Bigr):=f\Bigl(e(a_{i,j})_{k\times k}e\Bigr).
$$
It is easy to check that $\rho:=\text{res}\circ\rho_0$ has the property that  for any symmetrizing form $\tau\in A^\ast$ on $A$, $\rho(\tau)\in B^\ast$ is a symmetrizing form on $\End_A\bigl(P\bigr)\cong eM_{k\times k}(A)e\cong B^{\text{op}}$. It is clear that $\rho(\tau)$ is a symmetrizing form on $B$ too. This completes the proof of the lemma.
\end{proof}

\begin{lem}\label{homogeneous1} Let $A=\oplus_{k=0}^{m}A_k$ be a finite dimensional positively $\Z$-graded $K$-algebra. Let $\tau$ be a (not necessarily homogeneous) symmetrizing form on $A$. We define $\widetilde{\tau}: A^\ast\rightarrow K$ as follows: for any homogeneous element $y\in A$, $$
\widetilde{\tau}(y):=\begin{cases} \tau(x), &\text{if $\deg x=m$;}\\
0, &\text{otherwise.}
\end{cases}
$$
Then $\widetilde{\tau}$ can be linearly extended to a well-defined homogeneous symmetrizing form on $A$.
\end{lem}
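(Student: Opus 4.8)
The plan is to show first that $\widetilde\tau$ is well-defined and homogeneous, then that it is nondegenerate, and finally that it is symmetric (i.e., a trace). Since $A=\oplus_{k=0}^m A_k$ is positively graded with top degree $m$, decompose $\tau=\sum_{k}\tau_k$ where $\tau_k$ is the component of $\tau$ picking out the degree-$(m-k)$ part; concretely $\widetilde\tau=\tau_0$ is exactly the ``top-degree component'' of $\tau$, the linear functional that restricts to $\tau$ on $A_m$ and vanishes on $A_k$ for $k<m$. This makes $\widetilde\tau$ homogeneous of degree $-m$ by construction, and well-defined because every element of $A$ is uniquely a sum of homogeneous components.

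Next I would prove that $\widetilde\tau$ is a trace form, i.e. $\widetilde\tau(xy)=\widetilde\tau(yx)$ for all $x,y\in A$. By bilinearity it suffices to check this for homogeneous $x\in A_a$, $y\in A_b$. Then $xy,yx\in A_{a+b}$, so both sides are zero unless $a+b=m$, in which case $\widetilde\tau(xy)=\tau(xy)=\tau(yx)=\widetilde\tau(yx)$ using that $\tau$ itself is a trace. So the trace property for $\widetilde\tau$ follows immediately from that of $\tau$ together with the grading.

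The main point — and the step I expect to require the most care — is nondegeneracy of the induced bilinear form $(x,y)\mapsto\widetilde\tau(xy)$. Here I would argue degree by degree: it suffices to show that for each $a$ the pairing $A_a\times A_{m-a}\to K$, $(x,y)\mapsto\tau(xy)$, is a perfect pairing. Suppose $0\neq x\in A_a$. Since $\tau$ is nondegenerate on $A$, there is some $z\in A$ with $\tau(xz)\neq 0$; writing $z=\sum_k z_k$ with $z_k\in A_k$ homogeneous, we have $xz_k\in A_{a+k}$, and $\tau(xz_k)$ can be nonzero only when it has a nonzero component in degrees where $\tau$ is supported. But since $A$ is positively graded and $\tau$ is an arbitrary (not necessarily homogeneous) symmetrizing form, I must be slightly more careful: I would instead use that the radical filtration and the grading are compatible enough that $\tau$ restricted to $A_m$ is already nondegenerate as a pairing against $A_m$'s ``complement''. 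The clean way: observe that $\bigoplus_{k<m}A_k$ together with the part of $A_m$ in the radical of $\tau|_{A_m}$ would have to be a two-sided ideal on which $\widetilde\tau$ vanishes; but $A_m$ contains (or the socle forces) enough to detect everything. Concretely, if $I=\{x\in A:\widetilde\tau(xA)=0\}$ then $I$ is a two-sided ideal (by the trace property just established), $I$ is graded (since $\widetilde\tau$ is homogeneous), and for $x\in I\cap A_a$ we get $\tau(xy)=0$ for all $y\in A_{m-a}$, hence $\tau(xy)=0$ for all homogeneous $y$ of any degree other than $m-a$ automatically and for $y\in A_{m-a}$ by assumption — so $\tau(xA)=0$, forcing $x=0$ since $\tau$ is nondegenerate. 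Therefore $I=0$ and $\widetilde\tau$ is nondegenerate, completing the proof.
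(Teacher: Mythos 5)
Your treatment of well-definedness, homogeneity, and the trace identity is fine, but the nondegeneracy argument has a genuine error exactly at the step you flagged as requiring care. From $x\in I\cap A_a$ you correctly get $\tau(xy)=0$ for all $y\in A_{m-a}$, but you then assert that $\tau(xy)=0$ ``automatically'' for homogeneous $y$ of every \emph{other} degree. That is not true: for $y\in A_b$ with $b\neq m-a$ one has $xy\in A_{a+b}$ and hence $\widetilde\tau(xy)=0$, but $\tau(xy)$ can perfectly well be nonzero, because $\tau$ is not assumed homogeneous. You have silently replaced $\tau$ by $\widetilde\tau$ at the one place where the two differ, so the conclusion $\tau(xA)=0$, and with it $x=0$, does not follow.

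This is not a slip that can be repaired at the stated level of generality, because the lemma is in fact false as literally written. Take $A=K\times K[x]/(x^2)$ with $A_0=Ke_1\oplus Ke_2$ (the two primitive idempotents) and $A_1=Kx$, so $m=1$; the functional $\tau(e_1)=1$, $\tau(e_2)=0$, $\tau(x)=1$ is a symmetrizing form, yet $\widetilde\tau$ vanishes on the whole right ideal $e_1A=Ke_1$, so $e_1$ lies in the radical of $\widetilde\tau$. What rescues the lemma in the paper's application to $Z$ is the additional hypothesis $A_0=K$, which holds there because each graded component of $Z$ is one-dimensional. Under $A_0=K$ one has $\rad A=A_{\geq 1}$, and since $A_mA_{\geq 1}=0$ one gets $A_m\subseteq(\rad A)^\perp=\soc A$; a dimension count gives $A_m=\soc A$, one-dimensional, and $\tau|_{A_m}\neq 0$ since otherwise $1\in(\soc A)^\perp=\rad A$. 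Then for $0\neq x\in A_a$ the nonzero right ideal $xA$ meets $\soc A=A_m$, hence contains it; choosing $z$ with $xz$ spanning $A_m$ and taking its degree-$(m-a)$ component $z_{m-a}$ gives $0\neq xz_{m-a}\in A_m$ and $\widetilde\tau(xz_{m-a})=\tau(xz_{m-a})\neq 0$. Your framing via the graded two-sided ideal $I$ is a reasonable start, but both the hypothesis $A_0=K$ and the socle argument are essential and both are missing from your write-up.
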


\begin{proof} This is clear.
\end{proof}

The following definition comes from \cite[3.1.5]{SVV}.

\begin{dfn} We define $$
d_\Lam:=2\ell n-2n^2 .
$$
\end{dfn}

Recall that by Theorem \ref{mainthm2} the center $Z$ is a positively $\Z$-graded $K$-algebra with each homogeneous component being one dimensional.
In particular, $\deg z\leq d_\Lam$ for all $z\in Z$, and $\deg z_{\blam_{\maxim}}=d_\Lam$.

\begin{lem} \label{Zsymmetric} The center $Z$ can be endowed with a homogeneous symmetrizing form of degree $-d_\Lam$ as follows: for any homogeneous element $z\in Z$, $$
\tr(z):=\begin{cases} 1, &\text{if $z=z_{\blam_\maxim}$;}\\
0, &\text{if $\deg z<d_\Lam$.}
\end{cases} .
$$
In particular, $Z$ is a graded symmetric algebra over $K$.
\end{lem}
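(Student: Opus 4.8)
The plan is to reduce the statement to the fact, due to Shan--Varagnolo--Vasserot \cite{SVV}, that $\HH_{\ell,n}^{(0)}$ is a symmetric $K$-algebra, and then to combine this with the two preparatory Lemmas \ref{SymMorita} and \ref{homogeneous1}. First I would observe that since $Z$ is commutative by Lemma \ref{dimCenter}, the bilinear form $(z,z')\mapsto\tr(zz')$ on $Z$ is automatically symmetric and associative, and that $\tr$ is homogeneous: by the degree formula in Theorem \ref{mainthm2} it is supported on the top-degree component $Z_{d_\Lam}$, which is one-dimensional and spanned by $z_{\blam_{\maxim}}$ (indeed $\deg z_{\blam_{\maxim}}=2\ell n-n(n-1)-2\sum_{i=1}^{n}i=2\ell n-2n^2=d_\Lam$). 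Hence $\tr$ has degree $-d_\Lam$, and the only thing left to prove is that the form $(z,z')\mapsto\tr(zz')$ is non-degenerate.

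For non-degeneracy I would argue as follows. By \cite{SVV} the algebra $\HH_{\ell,n}^{(0)}$ is symmetric. By Proposition \ref{matrixIso} we have $\HH_{\ell,n}^{(0)}\cong M_{n!\times n!}(Z)$ (equivalently, $\HH_{\ell,n}^{(0)}\cong P_0^{\oplus n!}$ with $\End_{\HH_{\ell,n}^{(0)}}(P_0)\cong Z$ by Lemmas \ref{dimCenter} and \ref{keylem2}), so $\HH_{\ell,n}^{(0)}$ is Morita equivalent to $Z$; therefore Lemma \ref{SymMorita} shows that $Z$ is a symmetric algebra. Fix any symmetrizing form $\tau$ on $Z$. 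By Theorem \ref{mainthm2}, $Z=\bigoplus_{d=0}^{d_\Lam}Z_d$ is a finite-dimensional positively $\Z$-graded $K$-algebra with $\dim_K Z_{d_\Lam}=1$, so Lemma \ref{homogeneous1} applies and yields a homogeneous symmetrizing form $\widetilde\tau$ on $Z$ which vanishes on $Z_d$ for all $d<d_\Lam$. Non-degeneracy of $\widetilde\tau$ forces $\widetilde\tau$ to be nonzero on the one-dimensional space $Z_{d_\Lam}=Kz_{\blam_{\maxim}}$, say $\widetilde\tau(z_{\blam_{\maxim}})=c\in K^\times$. Comparing values on the homogeneous basis $\{z_\bmu\mid\bmu\in\P_0\}$ of Theorem \ref{mainthm2} gives $\tr=c^{-1}\widetilde\tau$, and since a nonzero scalar multiple of a symmetrizing form is again a symmetrizing form, $\tr$ is a homogeneous symmetrizing form of degree $-d_\Lam$ on $Z$. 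In particular $Z$ is a graded symmetric algebra.

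I do not expect a serious obstacle here: once $\HH_{\ell,n}^{(0)}$ is known to be symmetric, the argument is essentially bookkeeping with Morita-invariance of the symmetric property and with the homogenization device of Lemma \ref{homogeneous1}. The two places that need care are: (i) that the homogenized form $\widetilde\tau$ does not degenerate to $0$ --- this uses that $Z_{d_\Lam}$ is one-dimensional (Theorem \ref{mainthm2}), so that non-degeneracy of $\widetilde\tau$ pins down $\widetilde\tau(z_{\blam_{\maxim}})\ne0$; and (ii) that $Z_{d_\Lam}=Kz_{\blam_{\maxim}}$, which is again Theorem \ref{mainthm2} together with the degree computation above. (Alternatively, one could bypass \cite{SVV} by identifying $Z$ with the cohomology ring $H^*(\mathrm{Gr}(n,\ell);K)$ using the relations of Lemma \ref{HLlem0} and invoking Poincar\'e duality for that ring, but this is more work and is not needed for the present lemma.)
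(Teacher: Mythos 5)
Your argument is correct and follows essentially the same route as the paper's proof: Morita equivalence of $Z$ with $\HH_{\ell,n}^{(0)}$, Lemma \ref{SymMorita} to transfer the SVV symmetric structure, and Lemma \ref{homogeneous1} to homogenize, with the one-dimensionality of the graded pieces (Theorem \ref{mainthm2}, Corollary \ref{gradedCartan}) pinning $\tr$ down up to a nonzero scalar. The extra bookkeeping you supply (explicitly identifying $Z_{d_\Lam}=Kz_{\blam_{\maxim}}$ and showing $\tr=c^{-1}\widetilde\tau$) only makes the paper's terse argument more transparent.
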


\begin{proof} By Lemma \ref{dimCenter}, we know that $Z$ is Morita equivalent to $\HH_{\ell,n}^{(0)}$. Since $\HH_{\ell,n}^{(0)}$ is a symmetric algebra by \cite{SVV}, we can deduce from Lemma \ref{SymMorita} and Lemma \ref{homogeneous1} that $Z$ is a graded symmetric algebra too.

On the other hand, by Lemma \ref{dimCenter} and Corollary \ref{gradedCartan}, we know that the center $Z$ is positively graded $K$-algebra with each homogeneous component being one dimensional. Therefore, we are in a position to apply \cite[Proposition 3.9]{Hlam} or Lemma \ref{SymMorita} and Lemma \ref{homogeneous1} to show that $\tr$ is a well-defined homogeneous symmetrizing form on $Z$. This completes the proof of the lemma.
\end{proof}

Since $\tr$ is a homogeneous symmetrizing form on $Z$, for each nonzero homogeneous element $0\neq z\in Z$, there exists a homogeneous element $\hat{z}\in Z$ with degree $d_\Lam-\deg z$ such that $\tr(z\hat{z})\neq 0$. This motivates the following definition.

\begin{dfn} \label{zhat} For each $\blam\in\P_0$, we fix a nonzero homogeneous element $\widehat{z}_\blam\in Z$ with degree $d_\Lam-\deg z_{\blam}$ such that $\tr(z_\blam\widehat{z}_\blam)\neq 0$.
\end{dfn}

Now we are using Proposition \ref{matrixIso} and Lemma \ref{Zsymmetric} to define a homogeneous symmetrizing form $\hat{\Tr}$ on $\HH_{\ell,n}^{(0)}$ as follows: for any $1\leq i,j\leq n!$ and any homogeneous element $z\in Z$,
$$
\hat{\Tr}\bigl(\wf_{w_i,w_j}z\bigr):=\begin{cases} c, &\text{if $i=j$ and $z=cz_{\blam_{\maxim}}$ for some $c\in K$;}\\
0, &\text{if $i\neq j$ or $\deg z<d_\Lam$.}
\end{cases}
$$

\begin{lem} \label{1stForm} The map $\hat{\Tr}$ extends linearly to a well-defined homogeneous symmetrizing form of degree $-d_\Lam$ on $\HH_{\ell,n}^{(0)}$.
\end{lem}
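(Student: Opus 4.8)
The plan is to verify that $\hat{\Tr}$ is well-defined, bilinear-nondegenerate (i.e.\ a symmetrizing form), and homogeneous of the claimed degree, by transporting the known form $\tr$ on $Z$ across the isomorphism $\eta\colon M_{n!\times n!}(K)\otimes_K Z\xrightarrow{\sim}\HH_{\ell,n}^{(0)}$ of Proposition~\ref{matrixIso}. First I would observe that the prescription defining $\hat{\Tr}$ is exactly the composite $\hat{\Tr}=\mathrm{tr}_{\mathrm{matrix}}\otimes\,\tr$ read through $\eta^{-1}$: by Theorem~\ref{mainthm2} every $z\in Z$ has a unique expansion in the basis $\{z_\bmu\mid\bmu\in\P_0\}$, and $\tr(z)$ is precisely the coefficient of $z_{\blam_{\maxim}}$ in that expansion (since $\deg z_{\blam_{\maxim}}=d_\Lam$ and all other $z_\bmu$ have strictly smaller degree). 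Hence for $h\in\HH_{\ell,n}^{(0)}$ with $\eta^{-1}(h)=\sum_{i,j}E_{i,j}\otimes z_{i,j}$ we have $\hat{\Tr}(h)=\sum_{i=1}^{n!}\tr(z_{i,i})$, which makes it manifest that $\hat{\Tr}$ is a well-defined $K$-linear map (independence of choices is automatic because $\eta$ is an isomorphism and $\{\wf_{w_i,w_j}z_\bmu\}$ is a basis by Theorem~\ref{mainthm0} together with Theorem~\ref{mainthm2}).

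Next I would check the symmetrizing (trace) property $\hat{\Tr}(hh')=\hat{\Tr}(h'h)$. Writing $\eta^{-1}(h)=\sum_{i,j}E_{i,j}\otimes z_{i,j}$ and $\eta^{-1}(h')=\sum_{k,l}E_{k,l}\otimes z'_{k,l}$ and using that $Z$ is commutative (Lemma~\ref{dimCenter}) and central, one computes $\eta^{-1}(hh')=\sum_{i,l}E_{i,l}\otimes\big(\sum_j z_{i,j}z'_{j,l}\big)$, so $\hat{\Tr}(hh')=\sum_{i,j}\tr(z_{i,j}z'_{j,i})$; by symmetry $\hat{\Tr}(h'h)=\sum_{k,l}\tr(z'_{k,l}z_{l,k})$, and these two sums agree after relabelling because $Z$ is commutative. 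Thus $\hat{\Tr}$ is a trace form. For nondegeneracy: given $0\ne h\in\HH_{\ell,n}^{(0)}$ pick a pair $(a,b)$ with $z_{a,b}\ne0$; since $\tr$ is nondegenerate on $Z$ (Lemma~\ref{Zsymmetric}) there is $\hat z\in Z$ with $\tr(z_{a,b}\hat z)\ne0$, and then $h'=\eta(E_{b,a}\otimes\hat z)$ satisfies $\hat{\Tr}(hh')=\tr(z_{a,b}\hat z)\ne0$. Hence the induced pairing $\HH_{\ell,n}^{(0)}\times\HH_{\ell,n}^{(0)}\to K$, $(h,h')\mapsto\hat{\Tr}(hh')$, is nondegenerate, i.e.\ $\hat{\Tr}$ is a symmetrizing form.

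Finally I would verify homogeneity of degree $-d_\Lam$. By Theorem~\ref{mainthm0} the element $\wf_{w_i,w_j}$ is homogeneous of degree $2\ell(w_i)-2\ell(w_j)-n(n-1)$, and $z_\bmu$ is homogeneous of degree $2\ell n-n(n-1)-2\sum_{i=1}^n k_i$ (Definition~\ref{centerBasis}), so $\wf_{w_i,w_j}z_\bmu$ is homogeneous, and $\hat{\Tr}$ is supported only on those basis elements with $i=j$ and $\bmu=\blam_{\maxim}$, i.e.\ on elements of degree $\big(2\ell(w_i)-2\ell(w_i)-n(n-1)\big)+d_\Lam=d_\Lam-n(n-1)$; one checks using $\theta(\blam_{\maxim})=(1,2,\dots,n)$ that $\deg(\wf_{w_i,w_i}z_{\blam_{\maxim}})=2\ell n-2n^2=d_\Lam$, wait---more carefully, the nonzero values of $\hat{\Tr}$ occur exactly on the homogeneous component of $\HH_{\ell,n}^{(0)}$ of degree $d_\Lam$, so $\hat{\Tr}$ kills every homogeneous component of degree $\ne d_\Lam$ and is therefore homogeneous of degree $-d_\Lam$. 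I expect no serious obstacle here; the only point requiring care is matching the degree bookkeeping between the matrix-unit degrees and the center degrees so that $\hat{\Tr}$ is genuinely concentrated in a single degree, which follows from the fact (Theorem~\ref{mainthm2}) that each graded component of $Z$ is one-dimensional together with the explicit degree formula for $\wf_{w_i,w_i}$.
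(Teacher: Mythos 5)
Your proof is correct and follows exactly the same route as the paper, which simply states that the lemma "follows directly from Lemma \ref{Zsymmetric} and Proposition \ref{matrixIso}." You have unpacked what that means: transport the matrix trace $\mathrm{tr}_{\mathrm{matrix}}\otimes\tr$ across the isomorphism $\eta$, then check the trace identity, nondegeneracy, and homogeneity. All of these checks are fine.

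One small clean-up is worth making in the degree bookkeeping, where you noticed something off. The formula you quoted from Theorem~\ref{mainthm0} (and Corollary~\ref{fprimecor}), namely $\deg\wf_{w_i,w_j}=2\ell(w_i)-2\ell(w_j)-n(n-1)$, is a typo in the paper: computing directly from $\wf_{w_i,w_j}\equiv(-1)^{n(n-1)/2}\psi^{\blam_\minum}_{w_0w_i,w_j}$ and using $\ell(w_0w_i)=\ell(w_0)-\ell(w_i)$ together with $\deg y_{\blam_\minum}=n(n-1)$ gives $\deg\wf_{w_i,w_j}=2\ell(w_i)-2\ell(w_j)$, with no $-n(n-1)$. (This must be so anyway, since $\wf_{w_i,w_i}$ is a nonzero homogeneous idempotent and therefore has degree $0$.) With the corrected formula, $\deg(\wf_{w_i,w_i}z_{\blam_\maxim})=0+d_\Lam=d_\Lam$ directly, so $\hat{\Tr}$ is supported precisely on the degree-$d_\Lam$ component, which is the conclusion you reached via your "more carefully" remark. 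Once you replace the erroneous degree formula, the argument is airtight and is the same argument the authors had in mind.
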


\begin{proof} This follows directly from Lemma \ref{Zsymmetric} and Proposition \ref{matrixIso}.
\end{proof}

\begin{rem} \label{FinalRem} In \cite{SVV}, Shan, Varagnolo and Vasserot show that each cyclotomic quiver Hecke algebra $\R[\beta]$ can be endowed with a homogeneous symmetrizing form $\Tr^{\text{SVV}}$ of degree $d_{\Lam,\beta}$ which makes it into a graded symmetric algebra, where $$
\beta\in Q_n^+,\quad \Lam\in P^+,\quad d_{\Lam,\beta}:=2(\Lam,\beta)-(\beta,\beta).
$$
In the type $A$ case we consider the cyclic quiver or linear quiver with vertices labelled by $\Z/e\Z$, where $e\neq 1$ is a non-negative integer. In this case, $\R[\beta]$ can be identified with the block of the cyclotomic Hecke algebra of type $A$ which corresponds to $\beta$ by Brundan-Kleshchev's isomorphism (\cite{BK:GradedKL}) when the ground field $K$ contains a primitive $e$th root of unity or $e$ is equal to the characteristic of the ground field $K$. There is another homogeneous symmetrizing form $\Tr^{\text{HM}}$ which can be defined (cf. \cite[Section 6.3]{HuMathas:GradedCellular}) as follows: let $\tau$ be the ungraded symmetrizing form on $\R[\beta]$ defined in \cite{MM} (non-degenerate case) and \cite{BK:HigherSchurWeyl} (degenerate case).
Following \cite[Definition 6.15]{HuMathas:GradedCellular}, for any homogeneous element $x\in\R[\beta]$, we define $$
\Tr^{\text{HM}}(x):=\begin{cases} \tau(x), &\text{if $\deg(x)=d_{\Lam,\beta}$;}\\
0, &\text{otherwise.}
\end{cases} .
$$
By the proof of \cite[Theorem 6.17]{HuMathas:GradedCellular}, $\Tr^{\text{HM}}$ is a homogenous symmetrizing form on $\R[\beta]$ of degree $-d_{\Lam,\beta}$.
The associated homogenous bilinear form $\<-,-\>$ on $\R[\beta]$ of degree $-d_{\Lam,\beta}$ can be defined as follows: $\<x,y\>:=\Tr^{\text{HM}}(xy)$. We take this chance to remark that the bilinear form $\<-,-\>_\beta$ in the paragraph above \cite[Theorem 6.17]{HuMathas:GradedCellular} should be replaced with the bilinear form $\<-,-\>$ we defined here.
\end{rem}

\begin{conj} The two symmetrizing forms $\Tr^{\text{SVV}}$ and $\Tr^{\text{HM}}$ on $\R[\beta]$ differ by a nonzero scalar in $K$.
\end{conj}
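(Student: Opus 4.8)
The plan is to deduce the conjecture from the single structural fact that $\dim_K Z(\R[\beta])_0=1$, that is, the degree-zero part of the centre is spanned by the identity. Write $A:=\R[\beta]$; we may assume $K$ algebraically closed, since $\Tr^{\text{SVV}}$ and $\Tr^{\text{HM}}$ are defined over the original ground field and the scalar relating them will be a ratio of two values lying in $K$. Recall from \cite{SVV} and Remark \ref{FinalRem} that $\Tr^{\text{SVV}}$ and $\Tr^{\text{HM}}$ are both homogeneous symmetrizing forms on $A$ of degree $-d_{\Lam,\beta}$. Using $\Tr^{\text{SVV}}$ to identify $A$ with $A^{\ast}$, a linear functional on $A$ that vanishes on all commutators has the form $\Tr^{\text{SVV}}(z\,\cdot\,-)$ for a unique $z\in Z(A)$, and it is homogeneous of degree $-d_{\Lam,\beta}$ precisely when $z$ is homogeneous of degree $0$; hence the space of homogeneous symmetric functionals of degree $-d_{\Lam,\beta}$ is isomorphic to $Z(A)_0$. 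Granting $\dim_K Z(A)_0=1$, this space is one-dimensional and spanned by $\Tr^{\text{SVV}}$, so the nonzero element $\Tr^{\text{HM}}$ of it equals $c\,\Tr^{\text{SVV}}$ for some $c\in K^{\times}$, which is exactly what is claimed.

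So the whole content is to prove $Z(\R[\beta])_0=K\cdot 1$. Since $A$ is a single block of a cyclotomic Hecke algebra of type $A$ (by the block classification) and $K=\bar K$, the centre $Z(A)$ is a finite-dimensional commutative local $K$-algebra with residue field $K$, so $Z(A)_0=K\cdot 1\oplus(\rad Z(A))_0$ and it is enough to rule out a nonzero central element of degree $0$ lying in $\rad A$. I would attack this from the graded cellular basis of \cite{HuMathas:GradedCellular}: a homogeneous central element is constrained cell by cell by the requirement that it be fixed by the cellular anti-involution and commute with every generator, and an analysis of which cellular basis elements can occur in degree $0$ should force it to be a scalar multiple of the identity, in the same spirit as the proof of Theorem \ref{mainthm2} for the nilHecke algebra (where, more strongly, every graded component of the centre is one-dimensional). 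Alternatively one could transport the question, via Brundan--Kleshchev's isomorphism \cite{BK:GradedKL}, to the centres of blocks of cyclotomic Hecke algebras of type $A$, for which partial descriptions exist: for instance \cite{Brundan:degenCentre} in the degenerate case, where the graded centre is identified with the cohomology ring of a Springer-type fibre and is manifestly one-dimensional in degree $0$.

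The main obstacle is precisely this last point. Everything preceding it is formal, so the conjecture is equivalent to the assertion $Z(\R[\beta])_0=K$, which for general $\R[\beta]$ requires exactly the explicit understanding of the centre that is still lacking (cf. Question \ref{QC}); I would expect essentially all of the real work to go into proving this one-dimensionality in degree $0$, presumably by a careful degree-zero analysis of the graded cellular basis.
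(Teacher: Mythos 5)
This statement is labeled as a \emph{Conjecture} in the paper, and the paper offers no proof of it; the most the paper does is verify the analogous comparison for the special case $\R[\beta]=\HH_{\ell,n}^{(0)}$ (Theorem \ref{mainthm3}, comparing $\Tr^{\text{SVV}}$ with the form $\Tr$ of Definition \ref{DefTr}). So there is no ``paper's own proof'' to match your argument against, and I will assess the argument on its own terms.

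Your reduction is the right idea, and the formal part of it is sound: for a graded symmetric algebra $A$ with homogeneous symmetrizing form $\tau$ of degree $-d$, the pairing $z\mapsto\tau(z\,\cdot\,-)$ identifies $Z(A)_k$ with the space of trace forms on $A$ that are homogeneous of degree $k-d$, and since $\R[\beta]$ is a block (hence indecomposable) over $K=\bar K$ its centre is local with residue field $K$, so $Z(A)_0=K\cdot 1\oplus(\rad Z(A))_0$. Granting $\dim_K Z(\R[\beta])_0=1$, the conjecture follows. Two caveats, though. First, your claim that the conjecture is \emph{equivalent} to $Z(\R[\beta])_0=K$ is an overstatement: you only prove one implication. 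If $\dim_K Z_0>1$ the conjecture could a priori still hold (the extra degree-zero central elements might give only degenerate trace forms), so the conjecture does not feed back information about $Z_0$. Second, the suggestion to transport the question through the Brundan--Kleshchev isomorphism to blocks of cyclotomic Hecke algebras discards the grading, which is exactly the structure the question is about; one would have to re-impose the grading on the Hecke-algebra side and redo the computation there, which is not obviously simpler.

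The genuine gap is the one you yourself flag: you do not prove $Z(\R[\beta])_0=K\cdot 1$, and this is where all the content lies. Contrary to the optimism in your last paragraph, this is not a routine degree-zero bookkeeping exercise in the graded cellular basis. A homogeneous degree-zero element $z\in\HH_{\ell,n}^{(0)}$, say, can be written in the $\psi_{w,u}^{\blam}$ basis, and the number of basis elements of degree $0$ grows; commutativity with all generators imposes a system of relations whose solution space is what you need to bound. In the nilHecke case the paper circumvents this entirely by identifying $Z$ explicitly as the ring of symmetric polynomials (Theorem \ref{mainthm2}), from which positivity of the grading on $Z$ (hence $Z_0=K$) is immediate; no analogue of that identification is available for general $\R[\beta]$, which is precisely Question \ref{QC}. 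Webster's result \cite{Web} would give $Z(\R[\beta])$ as the image of the positively graded centre of $\RR_\beta$, hence $Z_0=K$, but only when the Cartan matrix is symmetric of finite type --- which excludes the cyclic (affine) quiver occurring in Remark \ref{FinalRem}. So as it stands, the proposal replaces one open problem by another that is at least as hard, and cannot be counted as a proof.
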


\begin{dfn} For each $\bmu\in\P_0$ and $z_1,z_2\in\Sym_n$, we define $$
\phi_{z_1,z_2}^\bmu:=\psi_{z_1}^\ast y_1^{n-1}y_2^{n-2}\cdots y_{n-1}\psi_{w_0}y_\bmu\psi_{w_0}y_1^{n-1}y_2^{n-2}\cdots y_{n-1}\psi_{z_2} .
$$
\end{dfn}

\begin{lem} \label{phibasis} 1) For each $\bmu\in\P_0$ and $z_1,z_2\in\Sym_n$, we have that $$
\phi_{w_0z_1,z_2}^\bmu=F'_{z_1,z_2}z_\bmu=\psi_{w_0z_1}^\ast y_1^{n-1}y_2^{n-2}\cdots y_{n-1}\psi_{w_0}y_1^{n-1}y_2^{n-2}\cdots y_{n-1}z_{\bmu}\psi_{z_2}
$$ and $$
\phi_{z_1,z_2}^\bmu\equiv\psi_{z_1,z_2}^\bmu\pmod{(\HH_{\ell,n}^{(0)})^{>\bmu}} .
$$

2) The elements in the set $\{\phi_{z_1,z_2}^\bmu|\bmu\in\P_0,z_1,z_2\in\Sym_n\}$ form a homogeneous $K$-basis of $\HH_{\ell,n}^{(0)}$.
\end{lem}

\begin{proof} The first part of 1) follows from Lemma \ref{keylem2}, while the second part of 1) follows from Lemma \ref{1relation}.
Finally, 2) follows from 1) and (\ref{HMcellular}).
\end{proof}

We are going to define another homogeneous symmetrizing form ``$\Tr$" on $\HH_{\ell,n}^{(0)}$. Let $\blam\in\P_0$ and $w,u\in\Sym_n$. By the same argument used in the proof of Lemma \ref{center1}, there is an element $z_{w,u}$ in the center $Z(\HH_{\ell,n}^{(0)})$ of $\HH_{\ell,n}^{(0)}$ such that $$
\psi_{w_0}y_1^{n-1}y_2^{n-2}\cdots y_{n-1}\psi_u\psi_{w^{-1}w_0}y_1^{n-1}y_2^{n-2}\cdots y_{n-1}\psi_{w_0}=\psi_{w_0}z_{w,u} .
$$
If $\deg z_{\blam}+\deg z_{w,u}=d_\Lam$, then we denote $c_{w,u}\in K$ the unique scalar which satisfies that $z_{w,u}z_{\blam}=c_{w,u}z_{\blam_{\max}}$.
Note that $\deg z_{\blam}+\deg z_{w,u}=d_\Lam$ if and only if $\deg\phi_{w_0w,u}^\blam=d_\Lam$.

\begin{dfn} \label{DefTr} For any $\bmu\in\P_0$ and $w,u\in\Sym_n$, we define $$
\Tr(F'_{w,u}z_\bmu)=\Tr(\phi_{w_0w,u}^\bmu):=\begin{cases} c_{w,u},&\text{if $\deg F'_{w,u}z_\bmu=d_\Lam$;}\\
0, &\text{otherwise.}
\end{cases}
$$
\end{dfn}

In particular, if $w=u$ and $\bmu=\lam_{\maxim}$ then $\Tr(\phi_{w,u}^\bmu)=1$. Note that $$\begin{aligned}
1&=\Tr(\phi_{w_0,1}^{\blam_{\max}})=\Tr(F'_{1,1}z_{\blam_{\max}})=\Tr(\psi_{w_0}^\ast y_1^{n-1}y_2^{n-2}\cdots y_{n-1}\psi_{w_0}y_1^{n-1}y_2^{n-2}\cdots y_{n-1}z_{\blam_{\max}})\\
&=(-1)^{n(n-1)/2}\Tr(\psi_{w_0}^\ast y_1^{n-1}y_2^{n-2}\cdots y_{n-1}z_{\blam_{\max}})\\
&=\Tr(\psi_{w_0}^\ast y_{\blam_{\max}}),\end{aligned}$$
which implies that \begin{equation}\label{maxvalue}
\Tr(\psi_{w_0}^\ast y_{\blam_{\max}})=1.
\end{equation}

\begin{prop} \label{keyprop1} The map $\Tr$ can be linearly extended to a well-defined homogeneous symmetrizing form of degree $-d_\Lam$ on $\HH_{\ell,n}^{(0)}$.
\end{prop}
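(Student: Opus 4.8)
The plan is to verify in turn that $\Tr$ is well defined, homogeneous of degree $-d_\Lam$, and a symmetrizing form; only the last point has real content, and it will be settled by computing $\Tr$ on the explicit basis of $\HH_{\ell,n}^{(0)}$ coming from the matrix description of Proposition~\ref{matrixIso} and comparing with the form $\hat{\Tr}$ of Lemma~\ref{1stForm}. Well-definedness is immediate, since $\Tr$ is prescribed on the graded cellular basis and extended linearly. For homogeneity one notes that $\Tr(\psi_{w,u}^\blam)\neq 0$ forces $\blam=\blam_{\maxim}$ and $wu^{-1}=w_0$; writing $w=w_0u$ gives $\ell(w)+\ell(u)=\ell(w_0)=\tfrac{n(n-1)}{2}$, while $\theta(\blam_{\maxim})=(1,2,\dots,n)$ gives $\sum_{i=1}^{n}k_i=\tfrac{n(n+1)}{2}$, so $\deg\psi_{w,u}^{\blam_{\maxim}}=2\ell n-n(n+1)-n(n-1)=2\ell n-2n^2=d_\Lam$. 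Hence $\Tr$ kills every homogeneous component of degree $\neq d_\Lam$, i.e. it is homogeneous of degree $-d_\Lam$; it remains to see that $\Tr$ is a non-degenerate trace form.

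Next I would evaluate $\Tr$ on the basis $\{\wf_{w_i,w_j}z_\bmu\mid 1\leq i,j\leq n!,\ \bmu\in\P_0\}$ of $\HH_{\ell,n}^{(0)}$ afforded by Proposition~\ref{matrixIso} and the central basis $\{z_\bmu\}$ of Theorem~\ref{mainthm2}. Since $\theta(\blam_{\minum})=(\ell-n+1,\dots,\ell)$ is componentwise maximal among all $\theta(\blam)$, the multipartition $\blam_{\minum}$ is the greatest element of $(\P_0,>)$, so $(\HH_{\ell,n}^{(0)})^{>\blam_{\minum}}=0$ and \eqref{leadingterm2} becomes the equality $\wf_{w_i,w_j}=(-1)^{n(n-1)/2}\psi_{w_0w_i}^{\ast}y_1^{n-1}\cdots y_{n-1}\psi_{w_j}$ (using $y_{\blam_{\minum}}=y_1^{n-1}\cdots y_{n-1}$). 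As $z_\bmu$ is central, $\wf_{w_i,w_j}z_\bmu=(-1)^{n(n-1)/2}\psi_{w_0w_i}^{\ast}\bigl(y_1^{n-1}\cdots y_{n-1}z_\bmu\bigr)\psi_{w_j}$, and by Lemma~\ref{keylem2}(1) there is $c_\bmu\in K^\times$ with $y_1^{n-1}\cdots y_{n-1}z_\bmu\equiv c_\bmu^{-1}y_\bmu\pmod{(\HH_{\ell,n}^{(0)})^{>\bmu}}$; applying $\psi_{w_0w_i}^{\ast}(-)\psi_{w_j}$ and using that $(\HH_{\ell,n}^{(0)})^{>\bmu}$ is a two-sided ideal yields
$$
\wf_{w_i,w_j}z_\bmu\equiv(-1)^{n(n-1)/2}c_\bmu^{-1}\psi_{w_0w_i,w_j}^{\bmu}\pmod{(\HH_{\ell,n}^{(0)})^{>\bmu}}.
$$

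Now the crucial structural observation: $\theta(\blam_{\maxim})=(1,\dots,n)$ is componentwise minimal, so $\blam_{\maxim}$ is the \emph{least} element of $(\P_0,>)$ and therefore $\psi_{w,u}^{\blam_{\maxim}}$ occurs in no ideal $(\HH_{\ell,n}^{(0)})^{>\bmu}$; hence $\Tr$ annihilates the error term above for every $\bmu$. Together with the fact that $(w_0w_i)w_j^{-1}=w_0$ iff $w_i=w_j$, Definition~\ref{DefTr} gives $\Tr(\wf_{w_i,w_j}z_\bmu)=(-1)^{n(n-1)/2}c_{\blam_{\maxim}}^{-1}\,\delta_{i,j}\,\delta_{\bmu,\blam_{\maxim}}$. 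On the other hand, since the $z_\bmu$ have pairwise distinct degrees with $\deg z_{\blam_{\maxim}}=d_\Lam$ the maximum, reading off Lemma~\ref{1stForm} gives $\hat{\Tr}(\wf_{w_i,w_j}z_\bmu)=\delta_{i,j}\delta_{\bmu,\blam_{\maxim}}$. Thus $\Tr=(-1)^{n(n-1)/2}c_{\blam_{\maxim}}^{-1}\hat{\Tr}$ on a basis, hence as linear functionals; since $c_{\blam_{\maxim}}\neq0$ and $\hat{\Tr}$ is a homogeneous symmetrizing form of degree $-d_\Lam$ by Lemma~\ref{1stForm}, the same holds for $\Tr$. (Alternatively, one can bypass $\hat{\Tr}$: the displayed value of $\Tr$ on this matrix-unit basis, the commutativity of $Z$, and the non-degeneracy of $\tr$ on $Z$ from Lemma~\ref{Zsymmetric} directly yield $\Tr(ab)=\Tr(ba)$ and the non-degeneracy of $(a,b)\mapsto\Tr(ab)$.)

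I expect the main obstacle to be the congruence $\wf_{w_i,w_j}z_\bmu\equiv(-1)^{n(n-1)/2}c_\bmu^{-1}\psi_{w_0w_i,w_j}^{\bmu}\pmod{(\HH_{\ell,n}^{(0)})^{>\bmu}}$, whose substance is Lemma~\ref{keylem2}; everything else is bookkeeping. The small but essential point that lets one descend from this congruence to the clean value of $\Tr(\wf_{w_i,w_j}z_\bmu)$ — and which is perhaps counter-intuitive — is that the single cell $\blam_{\maxim}$ on which $\Tr$ is supported is the \emph{minimal} one in the cellular order, so $\psi_{w,u}^{\blam_{\maxim}}$ never appears inside any of the ideals $(\HH_{\ell,n}^{(0)})^{>\bmu}$, and the triangular error terms are invisible to $\Tr$.
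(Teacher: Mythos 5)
Your proof is correct and takes a genuinely different route from the paper's. The paper proves the trace property $\Tr(\psi_w x)=\Tr(x\psi_w)$ by a four-case analysis on length-additivity in $\Sym_n$, then establishes non-degeneracy and symmetry separately, each by its own computation in the $\wf$-basis. You instead compute $\Tr$ once on the basis $\{\wf_{w_i,w_j}z_\bmu\}$ furnished by Proposition~\ref{matrixIso} and Theorem~\ref{mainthm2}, using Lemma~\ref{keylem2}(1) together with the two poset observations that $\blam_\minum$ is maximal (so \eqref{leadingterm2} is an equality) and $\blam_\maxim$ is minimal (so $\Tr$ kills every ideal $(\HH_{\ell,n}^{(0)})^{>\bmu}$), and conclude that $\Tr$ equals the nonzero scalar $(-1)^{n(n-1)/2}c_{\blam_\maxim}^{-1}$ times the form $\hat{\Tr}$ of Lemma~\ref{1stForm}. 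Since $\hat{\Tr}$ is already known to be a homogeneous symmetrizing form, all the required properties of $\Tr$ follow at once. In effect you fold Proposition~\ref{Comparing2Traces} into the argument and derive it first (your computation recovers exactly that scalar relation), whereas the paper proves $\Tr$ is a symmetrizing form from scratch and only afterward compares it with $\hat{\Tr}$. Both arguments ultimately rest on \cite{SVV} through Lemma~\ref{Zsymmetric} (the paper's non-degeneracy step uses $\widehat{z}_\bmu$ from Definition~\ref{zhat}, and your route uses $\hat{\Tr}$), so nothing is lost on that front; your version is shorter and exposes more clearly why only the cell $\blam_\maxim$ matters.
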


\begin{proof} By construction, it is clear that the  map $\Tr$ can be linearly extended  to a well-defined homogeneous linear map of degree $-d_\Lam$ on $\HH_{\ell,n}^{(0)}$.

We want to show that show that $\hat{\Tr}=\Tr$. Once this is proved, it is automatically that $\Tr$ is symmetric and non-degenerate.
To this end, by Lemma \ref{phibasis}, it suffices to show that $\hat{\Tr}(F'_{z_1,z_2}z_\bmu)=\Tr(F'_{z_1,z_2}z_\bmu)$ for any $\bmu\in\P_0$ and $z_1,z_2\in\Sym_n$.

Without loss of generality we can assume that $\deg(F'_{z_1,z_2}z_\bmu)=d_\Lam$. Since $\hat{\Tr}$ is a trace form and $z_\bmu$ is central, we have that $$\begin{aligned}
&\quad\,\hat{\Tr}(F'_{z_1,z_2}z_\bmu)\\
&=\hat{\Tr}(F'_{z_1,z_1}F'_{z_1,z_2}z_\bmu)\\
&=\hat{\Tr}(\psi_{w_0z_1}^\ast y_1^{n-1}y_2^{n-2}\cdots y_{n-1}\psi_{w_0}y_1^{n-1}y_2^{n-2}\cdots y_{n-1}
\psi_{z_1}\psi_{w_0z_1}^\ast y_1^{n-1}y_2^{n-2}\cdots y_{n-1}\psi_{w_0}y_1^{n-1}y_2^{n-2}\cdots y_{n-1}\psi_{z_2}z_\bmu)\\
&=\hat{\Tr}(\psi_{w_0z_1}^\ast y_1^{n-1}y_2^{n-2}\cdots y_{n-1}\psi_{w_0}y_1^{n-1}y_2^{n-2}\cdots y_{n-1}
\psi_{w_0}y_1^{n-1}y_2^{n-2}\cdots y_{n-1}\psi_{w_0}y_1^{n-1}y_2^{n-2}\cdots y_{n-1}\psi_{z_2}z_\bmu)\\
&=\hat{\Tr}(y_1^{n-1}y_2^{n-2}\cdots y_{n-1}
\psi_{w_0}y_1^{n-1}y_2^{n-2}\cdots y_{n-1}\psi_{w_0}y_1^{n-1}y_2^{n-2}\cdots y_{n-1}\psi_{z_2}\psi_{z_1^{-1}w_0}y_1^{n-1}y_2^{n-2}\cdots y_{n-1}\psi_{w_0}z_\bmu)\\
&=\hat{\Tr}(y_1^{n-1}y_2^{n-2}\cdots y_{n-1}
\psi_{w_0}y_1^{n-1}y_2^{n-2}\cdots y_{n-1}\psi_{w_0}z_{z_1,z_2}z_\bmu)\\
&=(-1)^{n(n-1)/2}\hat{\Tr}(y_1^{n-1}y_2^{n-2}\cdots y_{n-1}
\psi_{w_0}c_{z_1,z_2}z_{\blam_{\max}})\\
&=(-1)^{n(n-1)/2}c_{z_1,z_2}\hat{\Tr}(\psi_{w_0}y_1^{n-1}y_2^{n-2}\cdots y_{n-1}z_{\blam_{\max}})\\
&=c_{z_1,z_2}\hat{\Tr}(\wf_{1,1}z_{\blam_{\max}})\\
&=c_{z_1,z_2}\\
&={\Tr}(F'_{z_1,z_2}z_\bmu) .
\end{aligned}
$$

 This completes the proof of $\hat{\Tr}=\Tr$. In particular, this implies that $\Tr$ is symmetric and non-degenerate. That says,
$\Tr$ can be linearly extended to a well-defined homogeneous symmetrizing form of degree $-d_\Lam$ on $\HH_{\ell,n}^{(0)}$.
\end{proof}

\begin{prop} \label{Comparing2Traces} We have that $$
\hat{\Tr}=\Tr .
$$
\end{prop}

\begin{proof} This follows from the proof of Proposition \ref{keyprop1}.
\end{proof}

\bigskip

\section{Comparing $\Tr$ with the Shan--Varagnolo--Vasserot symmetrizing form $\Tr^{\text{SVV}}$}

In this section, we shall compare the symmetrizing form $\Tr$ with the Shan--Varagnolo--Vasserot symmetrizing form $\Tr^{\text{SVV}}$ introduced in \cite{SVV} and show that they are actually the same.

Let $A,B$ be two $K$-algebras and $i: B\rightarrow A$ is a $K$-algebra homomorphism. Let $A^B:=\{x\in A|xb=bx,\,\,\forall\,b\in B\}$ be the centralizer of $B$ in $A$. For any $f\in A^B$, we set $$
\mu_f: A\otimes_B A\rightarrow A,\quad\, a\otimes a'\mapsto afa' .
$$

Recall that $\HH_{\ell,n}^{(0)}=\RR^{\ell\Lam_0}_{n\alpha_0}$. In the notations of \cite[\S3.1.4]{SVV}, we set \begin{equation}\label{lambda0}
\lam_0:=\<\ell\Lam_0-(n-1)\alpha_0,\alpha_0^{\vee}\>=\ell-2(n-1).\end{equation}
We first recall the definition of $\Tr^{\text{SVV}}$ in the case of nilHecke algebra $\RR^{\ell\Lam_0}_{n\alpha_0}$.

\begin{dfn}\label{ptilde} \text{(\cite{KK}, \cite[Theorem 3.6, (6),(8)]{SVV})} If $\lambda_0\geq 0$ then for any $z\in\RR^{\ell\Lam_0}_{n\alpha_0}$ there are unique elements
$p_{k}(z)\in \RR^{\ell\Lam_0}_{(n-1)\alpha_0}$ and
$\pi(z)\in \RR^{\ell\Lam_0}_{(n-1)\alpha_0}\otimes_{R^{\ell\Lam_0}_{(n-2)\alpha_0}}\RR^{\ell\Lam_0}_{(n-1)\alpha_0}$ such that
$$
z=\mu_{\psi_{n-1}}(\pi(z))+\sum^{\lambda_0-1}_{k=0}p_{k}(z)y^k_{n},
$$
where the above summation is understood as $0$ when $\lam_0=0$.

If $\lambda_0\leq 0$ then for any $z\in \RR^{\ell\Lam_0}_{n\alpha_0}$, there is a unique element
$
\widetilde{z}\in \RR^{\ell\Lam_0}_{(n-1)\alpha_0}\otimes_{\RR^{\ell\Lam_0}_{(n-2)\alpha_0}} \RR^{\ell\Lam_0}_{(n-1)\alpha_0}$ such that
$$
\mu_{\psi_{n-1}}(\widetilde{z})=z,\,\,\, \text{and}\,\,\, \ \mu_{y^{k}_{n-1}}(\widetilde{z})=0,\,\,\, \forall\ k\in \{0, 1,\cdots,-\lambda_0-1\},
$$
where the range of $k$ is understood as $\emptyset$ when $\lam_0=0$.
\end{dfn}

\begin{dfn}\label{SVVdfn1} \text{(\cite[Theorem 3.8]{SVV})} For each $n\in\N$, we define $\hat{\varepsilon}_n: \RR^{\ell\Lam_0}_{n\alpha_0}\rightarrow\RR^{\ell\Lam_0}_{(n-1)\alpha_0}$ as follows:
for any $z\in \RR^{\ell\Lam_0}_{n\alpha_0}$, if $\lambda_0:=\ell-2(n-1)>0$ then $\hat{\varepsilon}_n(z):=p_{\ell-2(n-1)-1}(z)$; if $\lambda_0:=\ell-2(n-1)\leq 0$ then $\hat{\varepsilon}_n(z):=\mu_{y_{n-1}^{-\ell+2(n-1)}}(\widetilde{z})$.
\end{dfn}

\begin{dfn}\label{SVVdfn2} \text{(\cite[A.3.]{SVV})} For any  $z\in \RR^{\ell\Lam_0}_{n\alpha_0}$, $$
\Tr^{\text{SVV}}(z):=\hat{\varepsilon}_1\circ\hat{\varepsilon}_2\circ\cdots\circ\hat{\varepsilon}_n: \RR^{\ell\Lam_0}_{n\alpha_0}\rightarrow  \RR^{\ell\Lam_0}_{0\alpha_0}=K .
$$
\end{dfn}

\begin{dfn} \label{z0n} For each $n\in\N$, we define $$
Z_{0,n}:=\psi_{w_{0,n}}y_1^{\ell-1}y_2^{\ell-2}\cdots y_n^{\ell-n}\in\HH_{\ell,n}^{(0)} .
$$
\end{dfn}

We want to compute the value $\Tr^{\text{SVV}}(Z_{0,n})$. According to Definition \ref{ptilde}, we need to understand the value $p_{\ell-2(n-1)-1}(Z_{0,n})$ when $\ell>2(n-1)$ and the value $\mu_{y_{n-1}^{-\ell+2(n-1)}}(\widetilde{Z_{0,n}})$ when $\ell\leq 2(n-1)$.

\begin{lem} \label{case1} Suppose that $\lambda_0:=\ell-2(n-1)\geq 0$. Then $$\begin{aligned}
\pi(Z_{0,n})=&\bigl((\psi_{1}\cdots \psi_{n-2})y^{\ell-n}_{n-1}\bigr)\otimes (\psi_{1}\cdots \psi_{n-3}\psi_{n-2})\cdots (\psi_{1}\psi_{2})\psi_{1}y^{\ell-1}_1y^{\ell-2}_2\cdots y^{\ell-n+1}_{n-1}\\
&\qquad \in\RR^{\ell\Lam_0}_{(n-1)\alpha_0}\otimes_{\RR^{\ell\Lam_0}_{(n-2)\alpha_0}} \RR^{\ell\Lam_0}_{(n-1)\alpha_0},
\end{aligned}
$$
and for any $k\in \{0,1,\cdots, \lambda_0-1\}$,
$$
p_{k}(Z_{0,n})=(\psi_{1}\cdots \psi_{n-2})(\psi_{1}\cdots \psi_{n-3})\cdots (\psi_{1}\psi_{2})\psi_{1}y^{\ell-1}_1y^{\ell-2}_2\cdots y_{n-2}^{\ell-n+2}y^{\ell-n+\lam_0-k}_{n-1} .
$$
In particular, $p_{\lam_0-1}(Z_{0,n})=Z_{0,n-1}$.
\end{lem}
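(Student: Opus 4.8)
The strategy is to invoke the uniqueness clause of Definition~\ref{ptilde} (that is, \cite[Theorem~3.6]{SVV}): it suffices to exhibit \emph{one} expression
\[
Z_{0,n}=\mu_{\psi_{n-1}}(V)+\sum_{k=0}^{\lambda_0-1}C_k\,y_n^k,\qquad
V\in\RR^{\ell\Lam_0}_{(n-1)\alpha_0}\otimes_{\RR^{\ell\Lam_0}_{(n-2)\alpha_0}}\RR^{\ell\Lam_0}_{(n-1)\alpha_0},\quad C_k\in\RR^{\ell\Lam_0}_{(n-1)\alpha_0},
\]
in which $V$ and the $C_k$ are the elements named in the lemma; then $\pi(Z_{0,n})=V$ and $p_k(Z_{0,n})=C_k$ follow at once, and the ``in particular'' is the case $k=\lambda_0-1$ (the exponent of $y_{n-1}$ becomes $\ell-(n-1)$). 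One may assume $n\ge 2$. To build such an expression, I would first use the reduced word $w_{0,n}=w_{0,n-1}(s_{n-1}s_{n-2}\cdots s_1)$ together with the fact that $\psi_{n-2}\cdots\psi_1$ and $y_1^{\ell-1}\cdots y_{n-1}^{\ell-n+1}$ commute with $y_n$ to write
\[
Z_{0,n}=\psi_{w_{0,n-1}}\psi_{n-1}\,y_n^{\ell-n}\,X,\qquad
X:=(\psi_{n-2}\psi_{n-3}\cdots\psi_1)\,y_1^{\ell-1}y_2^{\ell-2}\cdots y_{n-1}^{\ell-n+1}\in\RR^{\ell\Lam_0}_{(n-1)\alpha_0}.
\]
Applying $\psi_{n-1}y_n^{\ell-n}=y_{n-1}^{\ell-n}\psi_{n-1}+\sum_{i=0}^{\ell-n-1}y_{n-1}^{\,i}y_n^{\ell-n-1-i}$ (an easy induction from Definition~\ref{NilHecke}) and commuting the powers of $y_n$ to the right past $\psi_{w_{0,n-1}}$, $y_{n-1}^{\,i}$ and $X$, this displays $Z_{0,n}$ as a ``$\psi_{n-1}$-term'' $\psi_{w_{0,n-1}}y_{n-1}^{\ell-n}\psi_{n-1}X$ plus a ``polynomial term'' $\sum_{k=0}^{\ell-n-1}C_k\,y_n^k$, where $C_k:=\psi_{w_{0,n-1}}\,y_{n-1}^{\,\ell-n-1-k}\,X$.

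The heart of the matter is the evaluation of $C_k$. Setting $m:=\ell-n-1-k$, the first step is the identity, valid in $\HH_{\ell,n-1}^{(0)}=\RR^{\ell\Lam_0}_{(n-1)\alpha_0}$ for every $m\ge 0$,
\[
\psi_{w_{0,n-1}}\,y_{n-1}^{m}\,(\psi_{n-2}\psi_{n-3}\cdots\psi_1)
=\psi_{w_{0,n-1}}\sum_{\substack{a_1,\dots,a_{n-1}\ge 0\\ a_1+\cdots+a_{n-1}=m-n+2}}y_1^{a_1}y_2^{a_2}\cdots y_{n-1}^{a_{n-1}},
\]
the right-hand side understood as $0$ when $m<n-2$. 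I would prove this by peeling the factors $\psi_{n-2},\psi_{n-3},\dots,\psi_1$ off the right one at a time: at each stage one commutes the relevant power of $y$ leftward through the current $\psi_r$ by means of $y_{r+1}^{a}\psi_r=\psi_r y_r^{a}+\sum_{j=0}^{a-1}y_r^{\,j}y_{r+1}^{a-1-j}$; the summand beginning with $\psi_r$ dies because $\psi_{w_{0,n-1}}\psi_r=0$ for $1\le r\le n-2$ (Lemma~\ref{2usefulProperties}(2), read inside $\Sym_{n-1}$); and the surviving complete-homogeneous sums amalgamate via $\prod_{j=r}^{n-1}(1-y_j t)^{-1}=(1-y_r t)^{-1}(1-y_{r+1}t)^{-1}\prod_{j=r+2}^{n-1}(1-y_j t)^{-1}$, an induction on the number of peeled factors then giving the displayed formula. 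The second step reduces the product of this degree-$d$ symmetric polynomial, where $d:=m-n+2=\lambda_0-1-k$, by $y_1^{\ell-1}y_2^{\ell-2}\cdots y_{n-1}^{\ell-n+1}$ to a single monomial, using only the cyclotomic relations inside $\HH_{\ell,n-1}^{(0)}$: in any summand, if $a_1\ge 1$ the factor $y_1^{\ell-1+a_1}$ vanishes since $y_1^{\ell}=0$; given $a_1=0$, if $a_2\ge 1$ then $y_1^{\ell-1}y_2^{\ell-2+a_2}=0$ by Lemma~\ref{vanish1}; and so on down to $a_{n-2}$, so only the term $a_1=\cdots=a_{n-2}=0$, $a_{n-1}=d$ survives. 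Hence, for $0\le k\le\lambda_0-1$,
\[
C_k=\psi_{w_{0,n-1}}\,y_1^{\ell-1}y_2^{\ell-2}\cdots y_{n-2}^{\ell-n+2}\,y_{n-1}^{\ell-n+1+d}
=\psi_{w_{0,n-1}}\,y_1^{\ell-1}\cdots y_{n-2}^{\ell-n+2}\,y_{n-1}^{\ell-n+\lambda_0-k},
\]
while $C_k=0$ for $\lambda_0\le k\le\ell-n-1$ (then $d<0$). Since $\lambda_0-1\le\ell-n-1$, the polynomial term equals $\sum_{k=0}^{\lambda_0-1}C_k\,y_n^k$, and recognising $(\psi_1\cdots\psi_{n-2})(\psi_1\cdots\psi_{n-3})\cdots(\psi_1\psi_2)\psi_1$ as a reduced word for $w_{0,n-1}$ identifies $C_k$ with the asserted value of $p_k(Z_{0,n})$.

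Finally, for the $\psi_{n-1}$-term I would pass to the other reduced word $w_{0,n-1}=(s_1 s_2\cdots s_{n-2})\,w_{0,n-2}$ and use that $\psi_{w_{0,n-2}}$ commutes with $y_{n-1}$ and lies in $\RR^{\ell\Lam_0}_{(n-2)\alpha_0}$ to rewrite, inside $\RR^{\ell\Lam_0}_{(n-1)\alpha_0}\otimes_{\RR^{\ell\Lam_0}_{(n-2)\alpha_0}}\RR^{\ell\Lam_0}_{(n-1)\alpha_0}$,
\[
\psi_{w_{0,n-1}}y_{n-1}^{\ell-n}\psi_{n-1}X
=\mu_{\psi_{n-1}}\!\Bigl(\bigl(\psi_{w_{0,n-1}}y_{n-1}^{\ell-n}\bigr)\otimes X\Bigr)
=\mu_{\psi_{n-1}}\!\Bigl(\bigl((\psi_1\cdots\psi_{n-2})y_{n-1}^{\ell-n}\bigr)\otimes\bigl(\psi_{w_{0,n-2}}X\bigr)\Bigr);
\]
since $\psi_{w_{0,n-2}}X=\psi_{w_{0,n-2}}(\psi_{n-2}\cdots\psi_1)y_1^{\ell-1}\cdots y_{n-1}^{\ell-n+1}=\psi_{w_{0,n-1}}y_1^{\ell-1}\cdots y_{n-1}^{\ell-(n-1)}=Z_{0,n-1}$ (now using $w_{0,n-1}=w_{0,n-2}(s_{n-2}\cdots s_1)$), this is exactly the asserted $\pi(Z_{0,n})$. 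Assembling the two terms and invoking the uniqueness clause of Definition~\ref{ptilde} completes the proof. The main obstacle is the iterated ``peeling'' computation establishing the displayed symmetric-polynomial identity and then keeping careful track of which monomials survive the cyclotomic reduction; once that is in hand, the rest is routine manipulation with reduced words and with the tensor product over $\RR^{\ell\Lam_0}_{(n-2)\alpha_0}$.
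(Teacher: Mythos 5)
Your proof is correct and takes essentially the same route as the paper: both split $Z_{0,n}$ with the relation $\psi_{n-1}y_n^{\ell-n}=y_{n-1}^{\ell-n}\psi_{n-1}+\sum_{i+j=\ell-n-1}y_{n-1}^iy_n^j$, compute $\psi_{w_{0,n-1}}y_{n-1}^m\psi_{n-2}\cdots\psi_1$ by repeated commutation with the $\psi_r$-terms killed via Lemma~\ref{2usefulProperties}(2), collapse the surviving complete homogeneous polynomial against $y_1^{\ell-1}\cdots y_{n-1}^{\ell-n+1}$ using Lemma~\ref{vanish1}, and invoke the uniqueness in Definition~\ref{ptilde}. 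The only difference is cosmetic: you isolate the symmetric-polynomial "peeling" identity as a named intermediate step and explicitly move $\psi_{w_{0,n-2}}$ across the tensor symbol to identify the right factor as $Z_{0,n-1}$, whereas the paper carries out the same manipulations inline.
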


\begin{proof} By definition, we have that $$ \begin{aligned}
Z_{0,n}&=\psi_{w_{0,n}}y^{\ell-1}_1y^{\ell-2}_2\cdots y^{\ell-n}_n  \\
&=(\psi_{1}\cdots \psi_{n-2}\psi_{n-1})(\psi_{1}\cdots \psi_{n-3}\psi_{n-2})\cdots (\psi_{1}\psi_{2})\psi_{1}y^{\ell-1}_1y^{\ell-2}_2\cdots y^{\ell-n}_n\\
&=(\psi_{1}\cdots \psi_{n-2})(\psi_{n-1}y^{\ell-n}_n)\psi_{w_{0,n-1}}y^{\ell-1}_1y^{\ell-2}_2\cdots y^{\ell-n+1}_{n-1}\\
&=(\psi_{1}\cdots \psi_{n-2})\biggl(y^{\ell-n}_{n-1}\psi_{n-1}+\sum_{\substack{a_1+a_2=\ell-n-1\\ a_1,a_2\geq 0}}y^{a_1}_{n-1}y^{a_2}_{n}\biggr)\psi_{w_{0,n-1}}y^{\ell-1}_1y^{\ell-2}_2\cdots y^{\ell-n+1}_{n-1}\\
&=(\psi_{1}\cdots \psi_{n-2})(y^{\ell-n}_{n-1}\psi_{n-1})\psi_{w_{0,n-1}}y^{\ell-1}_1y^{\ell-2}_2\cdots y^{\ell-n+1}_{n-1}
+\\
&\qquad\qquad\sum_{\substack{a_1+a_2=\ell-n-1\\ a_1,a_2\geq 0}}\biggl(\psi_{1}\cdots \psi_{n-2}y^{a_1}_{n-1}\psi_{w_{0,n-1}}y^{\ell-1}_1y^{\ell-2}_2\cdots y^{\ell-n+1}_{n-1}y^{a_2}_{n}\biggr)\\
&=(\psi_{1}\cdots \psi_{n-2})(y^{\ell-n}_{n-1}\psi_{n-1})\psi_{w_{0,n-1}}y^{\ell-1}_1y^{\ell-2}_2\cdots y^{\ell-n+1}_{n-1}
+\sum_{\substack{a_1+a_2=\ell-n-1\\ a_1,a_2\geq 0}}\biggl(\psi_{1}\cdots \psi_{n-2}y^{a_1}_{n-1}(\psi_1\cdots\psi_{n-3}\psi_{n-2})\\
\\
&\qquad\qquad (\psi_1\cdots\psi_{n-4}\psi_{n-3})
\cdots (\psi_1\psi_2)\psi_1 y^{\ell-1}_1y^{\ell-2}_2\cdots y^{\ell-n+1}_{n-1}y^{a_2}_{n}\biggr)\\
&=(\psi_{1}\cdots \psi_{n-2})(y^{\ell-n}_{n-1}\psi_{n-1})\psi_{w_{0,n-1}}y^{\ell-1}_1y^{\ell-2}_2\cdots y^{\ell-n+1}_{n-1}
+\sum_{\substack{a_1+a_2=\ell-n-1\\ a_1,a_2\geq 0}}\biggl((\psi_{1}\cdots \psi_{n-2})(\psi_1\cdots\psi_{n-3})\\
&\qquad\qquad (\psi_1\cdots\psi_{n-4})\cdots \psi_1 y^{a_1}_{n-1} (\psi_{n-2}\psi_{n-3}\cdots \psi_2\psi_1)y^{\ell-1}_1y^{\ell-2}_2\cdots y^{\ell-n+1}_{n-1}y^{a_2}_{n}\biggr)\\
&=\mu_{\psi_{n-1}}\biggl((\psi_{1}\cdots \psi_{n-2}y^{\ell-n}_{n-1})\otimes (\psi_{w_{0,n-1}}y^{\ell-1}_1y^{\ell-2}_2\cdots y^{\ell-n+1}_{n-1})\biggr)+\\
&\qquad\qquad\sum_{\substack{a_1+a_2=\ell-n-1\\ a_1,a_2\geq 0}}\psi_{w_{0,n-1}}(y^{a_1}_{n-1}\psi_{n-2}\cdots \psi_{2}\psi_{1})y^{\ell-1}_1y^{\ell-2}_2\cdots y^{\ell-n+1}_{n-1}y^{a_2}_{n} .
\end{aligned}
$$

Using the uniqueness in Definition \ref{ptilde}, we see that to prove the lemma, it suffices to show that $$ \begin{aligned}
&\quad\,\sum_{\substack{a_1+a_2=\ell-n-1\\ a_1,a_2\geq 0}}\psi_{w_{0,n-1}}(y^{a_1}_{n-1}\psi_{n-2}\cdots \psi_{2}\psi_{1})y^{\ell-1}_1y^{\ell-2}_2\cdots y^{\ell-n+1}_{n-1}y^{a_2}_{n}\\
&=\sum_{k=0}^{\lam_0-1}\psi_{w_{0,n-1}}y^{\ell-1}_1y^{\ell-2}_2\cdots y^{\ell-n+2}_{n-2}y^{\ell-n+\lam_0-k}_{n-1}y^{k}_{n} .
\end{aligned}
$$

In fact, $$\begin{aligned}
&\quad\,\sum_{\substack{a_1+a_2=\ell-n-1\\ a_1,a_2\geq 0}}\psi_{w_{0,n-1}}(y^{a_1}_{n-1}\psi_{n-2}\cdots \psi_{2}\psi_{1})y^{\ell-1}_1y^{\ell-2}_2\cdots y^{\ell-n+1}_{n-1}y^{a_2}_{n}\\
&=\sum_{\substack{a_1+a_2=\ell-n-1\\ a_1,a_2\geq 0}}\psi_{w_{0,n-1}}(y^{a_1}_{n-1}\psi_{n-2}\cdots \psi_{2}\psi_{1})y^{\ell-1}_1y^{\ell-2}_2\cdots y^{\ell-n+1}_{n-1}y^{a_2}_{n}\\
&=\sum_{\substack{a_1+a_2=\ell-n-1\\ a_1\geq n-2, a_2\geq 0}}\psi_{w_{0,n-1}}(y^{a_1}_{n-1}\psi_{n-2}\cdots \psi_{2}\psi_{1})y^{\ell-1}_1y^{\ell-2}_2\cdots y^{\ell-n+1}_{n-1}y^{a_2}_{n}\\
&=\psi_{w_{0,n-1}}\psi_{n-2}\cdots \psi_{2}\psi_{1}y^{\ell-1}_1y^{\ell-2}_2\cdots y^{\ell-n+2}_{n-2} y^{\ell-n+1}_{n-1}y^{\ell-2n+1}_{n}+\psi_{w_{0,n-1}}y^{\ell-1}_1y^{\ell-2}_2\cdots y^{\ell-n+2}_{n-2}y^{\ell-n+2}_{n-1}y^{\ell-2n}_{n}\\
&\quad\,+\psi_{w_{0,n-1}}y^{\ell-1}_1y^{\ell-2}_2\cdots y^{\ell-n+2}_{n-2}y^{\ell-n+3}_{n-1}y^{\ell-2n-1}_{n}+\cdots+\psi_{w_{0,n-1}}y^{\ell-1}_1y^{\ell-2}_2\cdots y^{\ell-n+2}_{n-2}y^{2\ell-3n+1}_{n-1}y_{n}\\
&\quad\,+\psi_{w_{0,n-1}}y^{\ell-1}_1y^{\ell-2}_2\cdots y^{\ell-n+2}_{n-2}y^{2\ell-3n+2}_{n-1}\\
&=\sum_{k=0}^{\lam_0-1}\psi_{w_{0,n-1}}y^{\ell-1}_1y^{\ell-2}_2\cdots y^{\ell-n+2}_{n-2}y^{\ell-n+\lam_0-k}_{n-1}y^{k}_{n}  ,
\end{aligned}
$$
where we have used the commutator relations for the $\psi$ and $y$ generators of $\HH_{\ell,n}^{(0)}$ and the fact that $\psi_{w_{0,n-1}}\psi_r=0$ for any $1\leq r<n-1$ in the second and the last equalities. This completes the proof of the lemma.
\end{proof}

\begin{lem} \label{case2} Suppose that $\lambda_0:=\ell-2(n-1)\leq 0$. Then
$$
\begin{aligned}
\widetilde{Z_{0,n}}&=\bigl((\psi_{1}\psi_2\cdots \psi_{n-2})y^{\ell-n}_{n-1}\bigr)\otimes \bigl((\psi_{1}\cdots \psi_{n-3}\psi_{n-2})\cdots (\psi_{1}\psi_{2})\psi_{1}y^{\ell-1}_1y^{\ell-2}_2\cdots y^{\ell-n+1}_{n-1}\bigr)\\
&\qquad \in \RR^{\ell\Lam_0}_{(n-1)\alpha_0}\otimes_{R^{\ell\Lam_0}_{(n-2)\alpha_0}}\RR^{\ell\Lam_0}_{(n-1)\alpha_0} .
\end{aligned}
$$
Furthermore, in this case we have that $$
\mu_{y_{n-1}^{-\lam_0}}(\widetilde{Z_{0,n}})=Z_{0,n-1}=(\psi_{1}\cdots \psi_{n-2})(\psi_{1}\cdots \psi_{n-3})\cdots (\psi_{1}\psi_{2})\psi_{1}y^{\ell-1}_1y^{\ell-2}_2\cdots y^{\ell-n+1}_{n-1}.
$$
\end{lem}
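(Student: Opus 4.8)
The plan is to follow the proof of Lemma~\ref{case1} almost verbatim, paying attention to the single place where the sign of $\lam_0$ intervenes. The opening computation in the proof of Lemma~\ref{case1} -- the one that peels the last strand off $Z_{0,n}$ -- never uses $\lam_0\ge 0$, so I may start from the identity derived there,
\[
Z_{0,n}=\mu_{\psi_{n-1}}(\xi)+\sum_{\substack{a_1+a_2=\ell-n-1\\ a_1,a_2\ge 0}}\psi_{w_{0,n-1}}\bigl(y_{n-1}^{a_1}\psi_{n-2}\cdots\psi_1\bigr)y_1^{\ell-1}y_2^{\ell-2}\cdots y_{n-1}^{\ell-n+1}y_n^{a_2},
\]
where $\xi:=\bigl((\psi_1\psi_2\cdots\psi_{n-2})y_{n-1}^{\ell-n}\bigr)\otimes\bigl(\psi_{w_{0,n-1}}y_1^{\ell-1}y_2^{\ell-2}\cdots y_{n-1}^{\ell-n+1}\bigr)$; since $\psi_{w_{0,n-1}}y_1^{\ell-1}\cdots y_{n-1}^{\ell-n+1}=Z_{0,n-1}$, this $\xi$ is exactly the element displayed in the statement. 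Throughout I shall use the two reduced factorisations $\psi_{w_{0,n-1}}=(\psi_1\cdots\psi_{n-2})\psi_{w_{0,n-2}}=\psi_{w_{0,n-2}}(\psi_{n-2}\psi_{n-3}\cdots\psi_1)$ and the fact that $y_{n-1}$ commutes with $\psi_{w_{0,n-2}}$ (which involves only $\psi_1,\dots,\psi_{n-3}$); together these give the conjugation identity $(\psi_1\cdots\psi_{n-2})\,y_{n-1}^{c}\,\psi_{w_{0,n-1}}=\psi_{w_{0,n-1}}\,y_{n-1}^{c}\,\psi_{n-2}\psi_{n-3}\cdots\psi_1$ for all $c\ge 0$.

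Now the hypothesis $\lam_0\le 0$ means $\ell\le 2(n-1)$, hence $\ell-n-1\le n-3<n-2$, so every summand above has $a_1<n-2$. I claim each such summand vanishes. Commuting $y_{n-1}^{a_1}$ rightward through $\psi_{n-2}\psi_{n-3}\cdots\psi_1$ one reflection at a time, using $y_{r+1}^{m}\psi_r=\psi_r y_r^{m}+\sum_{i=0}^{m-1}y_{r+1}^{m-1-i}y_r^{i}$, the summand retaining $\psi_r$ is killed by $\psi_{w_{0,n-1}}\psi_r=0$ (Lemma~\ref{2usefulProperties}(2) applied in $\Sym_{n-1}$), while every remaining summand has strictly smaller total $y$-degree; since only $n-2$ reflections are available and only $a_1<n-2$ units of $y$-degree can be spent, on every branch one eventually reaches a point where the mobile variable has degree $0$ with a reflection $\psi_r$ still to the right of $\psi_{w_{0,n-1}}$ (after sliding the now inert $y$'s, which commute with the remaining reflections, past it), and then $\psi_{w_{0,n-1}}\psi_r=0$ finishes it. (This is precisely the step in the proof of Lemma~\ref{case1} where the sum over $a_1\ge 0$ is replaced by the sum over $a_1\ge n-2$; here the latter range is empty.) Hence $Z_{0,n}=\mu_{\psi_{n-1}}(\xi)$. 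To apply the uniqueness clause of Definition~\ref{ptilde} for $\lam_0\le 0$ I must also check $\mu_{y_{n-1}^{k}}(\xi)=0$ for $0\le k\le -\lam_0-1$; but $\mu_{y_{n-1}^{k}}(\xi)=(\psi_1\cdots\psi_{n-2})y_{n-1}^{\ell-n+k}\psi_{w_{0,n-1}}y_1^{\ell-1}\cdots y_{n-1}^{\ell-n+1}=\psi_{w_{0,n-1}}y_{n-1}^{\ell-n+k}\psi_{n-2}\cdots\psi_1\cdot y_1^{\ell-1}\cdots y_{n-1}^{\ell-n+1}$ by the conjugation identity, and $\ell-n+k\le\ell-n+(-\lam_0-1)=n-3<n-2$, so the same peeling gives $0$. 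Therefore $\widetilde{Z_{0,n}}=\xi$, which is the first assertion.

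For the second assertion, $\mu_{y_{n-1}^{-\lam_0}}(\widetilde{Z_{0,n}})=(\psi_1\cdots\psi_{n-2})y_{n-1}^{\ell-n-\lam_0}Z_{0,n-1}$, and $\ell-n-\lam_0=n-2$, so by the conjugation identity this equals $\psi_{w_{0,n-1}}\,y_{n-1}^{n-2}\,\psi_{n-2}\psi_{n-3}\cdots\psi_1\cdot y_1^{\ell-1}y_2^{\ell-2}\cdots y_{n-1}^{\ell-n+1}$. Here the exponent $n-2$ of $y_{n-1}$ is exactly matched to the $n-2$ reflections: running the same peeling, the total $y$-degree after processing all $n-2$ reflections is $0$, so on the unique branch surviving the ``inert-$y$'' annihilations the polynomial factor collapses to the constant $1$, whence $\psi_{w_{0,n-1}}y_{n-1}^{n-2}\psi_{n-2}\cdots\psi_1=\psi_{w_{0,n-1}}$. (In the notation of the proof of Lemma~\ref{case1} this is the ``$a_1=n-2$'' term; inside the nilHecke algebra no further terms survive, so -- unlike the $a_1>n-2$ terms there -- no cyclotomic relation is needed.) Consequently $\mu_{y_{n-1}^{-\lam_0}}(\widetilde{Z_{0,n}})=\psi_{w_{0,n-1}}y_1^{\ell-1}y_2^{\ell-2}\cdots y_{n-1}^{\ell-n+1}=Z_{0,n-1}$, as claimed.

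The technical heart of the argument, and the step I expect to cost the most care, is the ``peeling lemma'': for $1\le m$ and $c\ge 0$,
\[
\psi_{w_{0,m}}\,y_m^{\,c}\,\psi_{m-1}\psi_{m-2}\cdots\psi_1=
\begin{cases}
0, & c<m-1,\\
\psi_{w_{0,m}}, & c=m-1,
\end{cases}
\]
which is proved by a finite induction (peeling off one $\psi_r$ at a time as above, using only the defining relations and $\psi_{w_{0,m}}\psi_r=0$). Some bookkeeping is needed to see that, on the surviving branch when $c=m-1$, the coefficient is exactly $+1$; and one should keep the smallest case $n=2$ in mind, where the products $\psi_1\cdots\psi_{n-2}$ and $\psi_{n-2}\cdots\psi_1$ are empty and $\psi_{w_{0,1}}=1$, so all the identities above reduce to trivialities.
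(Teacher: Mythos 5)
Your proof is correct and follows essentially the same route as the paper's: expand $Z_{0,n}$ by peeling off the $(n-1)$-st crossing, observe that the residual sum over $a_1+a_2=\ell-n-1$ vanishes when $\lam_0\le 0$ because every $a_1$ is $<n-2$, verify $\mu_{y_{n-1}^k}(\xi)=0$ for $0\le k\le-\lam_0-1$ by the same conjugation-and-peeling argument, and invoke the uniqueness clause of Definition~\ref{ptilde}. The one substantive addition in your write-up is the explicit verification of the second assertion, $\mu_{y_{n-1}^{-\lam_0}}(\widetilde{Z_{0,n}})=Z_{0,n-1}$, via the boundary case $c=m-1$ of your peeling identity $\psi_{w_{0,m}}\,y_m^{m-1}\,\psi_{m-1}\cdots\psi_1=\psi_{w_{0,m}}$ (with $m=n-1$); the paper's proof establishes only $\widetilde{Z_{0,n}}=\xi$ and never checks that applying $\mu_{y_{n-1}^{-\lam_0}}$ actually returns $Z_{0,n-1}$, so your peeling lemma closes a small but genuine gap there. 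Your packaging of the two vanishing ranges and the $c=m-1$ case into a single ``peeling lemma'' is also a cleaner way to organize what the paper does ad hoc inside the displayed computations.
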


\begin{proof} By definition, we have that $$ \begin{aligned}
Z_{0,n}&=\psi_{w_{0,n}}y^{\ell-1}_1y^{\ell-2}_2\cdots y^{\ell-n}_n \\
&=(\psi_{1}\cdots \psi_{n-2}\psi_{n-1})(\psi_{1}\cdots \psi_{n-3}\psi_{n-2})\cdots (\psi_{1}\psi_{2})\psi_{1}y^{\ell-1}_1y^{\ell-2}_2\cdots y^{\ell-n}_n\\
&=(\psi_{1}\cdots \psi_{n-2})(\psi_{n-1}y^{\ell-n}_n)(\psi_{1}\cdots \psi_{n-3}\psi_{n-2})\cdots (\psi_{1}\psi_{2})\psi_{1}y^{\ell-1}_1y^{\ell-2}_2\cdots y^{\ell-n+1}_{n-1}\\
&=(\psi_{1}\cdots \psi_{n-2})\biggl(y^{\ell-n}_{n-1}\psi_{n-1}+\sum_{\substack{a_1+a_2=\ell-n-1\\ a_1,a_2\geq 0}}y^{a_1}_{n-1}y^{a_2}_{n}\biggr)\psi_{w_{0,n-1}}y^{\ell-1}_1y^{\ell-2}_2\cdots y^{\ell-n+1}_{n-1}\\
&=(\psi_{1}\cdots \psi_{n-2})(y^{\ell-n}_{n-1}\psi_{n-1})\psi_{w_{0,n-1}}y^{\ell-1}_1y^{\ell-2}_2\cdots y^{\ell-n+1}_{n-1}+\\
&\qquad\quad\sum_{\substack{a_1+a_2=\ell-n-1\\ a_1,a_2\geq 0}}\psi_{1}\cdots \psi_{n-2}(y^{a_1}_{n-1})\psi_{w_{0,n-1}}y^{\ell-1}_1y^{\ell-2}_2\cdots y^{\ell-n+1}_{n-1}y^{a_2}_{n} .
\end{aligned}
$$

We now claim that \begin{equation}\label{sum01}
\sum_{\substack{a_1+a_2=\ell-n-1\\ a_1,a_2\geq 0}}\psi_{1}\cdots \psi_{n-2}(y^{a_1}_{n-1})\psi_{w_{0,n-1}}y^{\ell-1}_1y^{\ell-2}_2\cdots y^{\ell-n+1}_{n-1}y^{a_2}_{n}=0 .
\end{equation}

In fact, we have that $$ \begin{aligned}
&\quad\,\sum_{\substack{a_1+a_2=\ell-n-1\\ a_1,a_2\geq 0}}\psi_{1}\cdots \psi_{n-2}(y^{a_1}_{n-1})\psi_{w_{0,n-1}}y^{\ell-1}_1y^{\ell-2}_2\cdots y^{\ell-n+1}_{n-1}y^{a_2}_{n}\\
&=\sum_{\substack{a_1+a_2=\ell-n-1\\ a_1,a_2\geq 0}}\psi_{1}\cdots \psi_{n-2}(y^{a_1}_{n-1})(\psi_1\cdots\psi_{n-3}\psi_{n-2})(\psi_1\cdots\psi_{n-4}\psi_{n-3})
\cdots (\psi_1\psi_2)(\psi_1)y^{\ell-1}_1y^{\ell-2}_2\cdots y^{\ell-n+1}_{n-1}y^{a_2}_{n}\\
&=\sum_{\substack{a_1+a_2=\ell-n-1\\ a_1,a_2\geq 0}}\psi_{w_{0,n-1}}(y^{a_1}_{n-1}\psi_{n-2}\cdots \psi_{2}\psi_{1})y^{\ell-1}_1y^{\ell-2}_2\cdots y^{\ell-n+1}_{n-1}y^{a_2}_{n}\\
&=\sum_{\substack{a_1+a_2=\ell-n-1\\ a_1>0, a_2\geq 0}}\psi_{w_{0,n-1}}(y^{a_1}_{n-1}\psi_{n-2}\cdots \psi_{2}\psi_{1})y^{\ell-1}_1y^{\ell-2}_2\cdots y^{\ell-n+1}_{n-1}y^{a_2}_{n} ,
\end{aligned}
$$
where the last equality follows from the fact that $\psi_{w_{0,n-1}}\psi_{n-2}=0$. Now by assumption, $a_1\leq \ell-n-1\leq 2(n-1)-n-1=n-3<n-2$. It follows that $y^{a_1}_{n-1}\psi_{n-2}\cdots \psi_{2}\psi_{1}$ is a sum of some elements which have a left factor of the form $\psi_r$ for some $1\leq r<n-1$. Therefore, using the fact that $\psi_{w_{0,n-1}}\psi_r=0$ for any $1\leq r<n-1$ again, we can deduce that the above sum is $0$. This completes the proof of the claim (\ref{sum01}).

By Definition \ref{ptilde}, to complete the proof of the lemma, it remains to show that for any $0\leq k\leq-\lam_0-1$, \begin{equation}\label{sum02}
\mu_{y^k_{n-1}}\biggl((\psi_{1}\psi_2\cdots \psi_{n-2}y^{\ell-n}_{n-1})\otimes \bigl(\psi_{w_{0,n-1}}y^{\ell-1}_1y^{\ell-2}_2\cdots y^{\ell-n+1}_{n-1}\bigr)\biggr)=0 .
\end{equation}

In fact, we have that $$
\begin{aligned}
&\quad\,\mu_{y^k_{n-1}}\biggl((\psi_{1}\psi_2\cdots \psi_{n-2}y^{\ell-n}_{n-1})\otimes \bigl(\psi_{w_{0,n-1}}y^{\ell-1}_1y^{\ell-2}_2\cdots y^{\ell-n+1}_{n-1}\bigr)\biggr)\\
&=\mu_{y^k_{n-1}}\biggl((\psi_{1}\cdots \psi_{n-2}y^{\ell-n}_{n-1})\otimes (\psi_{1}\cdots \psi_{n-3}\psi_{n-2})\cdots (\psi_{1}\psi_{2})\psi_{1}y^{\ell-1}_1y^{\ell-2}_2\cdots y^{\ell-n+1}_{n-1}\biggr)\\
&=(\psi_{1}\cdots \psi_{n-2})(y^{\ell-n+k}_{n-1})(\psi_{1}\cdots \psi_{n-3}\psi_{n-2})(\psi_{1}\cdots \psi_{n-4}\psi_{n-3})\cdots (\psi_{1}\psi_{2})\psi_{1}y^{\ell-1}_1y^{\ell-2}_2\cdots y^{\ell-n+1}_{n-1}\\
&=(\psi_{1}\cdots \psi_{n-2})(\psi_{1}\cdots \psi_{n-3})\cdots (\psi_{1}\psi_{2})\psi_{1}(y^{\ell-n+k}_{n-1}\psi_{n-2}\psi_{n-3}\cdots \psi_2\psi_{1})y^{\ell-1}_1y^{\ell-2}_2\cdots y^{\ell-n+1}_{n-1}\\
&=\psi_{w_{0,n-1}}(y^{\ell-n+k}_{n-1}\psi_{n-2}\psi_{n-3}\cdots \psi_{1})y^{\ell-1}_1y^{\ell-2}_2\cdots y^{\ell-n+1}_{n-1}\\
&=0 ,
\end{aligned}
$$
where the last equality follows from the fact that $\psi_{w_{0,n-1}}\psi_r=0$ for any $1\leq r<n-1$ and the assumption that $$
\ell-n+k\leq \ell-n-\lambda_0-1=\ell-n-(\ell-2(n-1))-1=n-3<n-2
$$
so that $y^{\ell-n+k}_{n-1}\psi_{n-2}\psi_{n-3}\cdots \psi_{1}$ is a sum of some elements which have a left factor of the form $\psi_r$ for some $1\leq r<n-1$.
This completes the proof of (\ref{sum02}) and hence the proof of the lemma.
\end{proof}

\begin{cor} \label{SVVtrace} We have that $\Tr^{\text{SVV}}(Z_{0,n})=1$.
\end{cor}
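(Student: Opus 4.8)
The plan is to evaluate $\Tr^{\text{SVV}}(Z_{0,n})$ directly from its inductive definition (Definition \ref{SVVdfn2}) by stripping off the operators $\hat{\varepsilon}_m$ one at a time. First I would record that Lemmas \ref{case1} and \ref{case2}, applied with $n$ replaced by $m$, compute $\hat{\varepsilon}_m(Z_{0,m})$ for every $m\geq 2$ according to the two branches of Definition \ref{SVVdfn1}: if $\lambda_0=\ell-2(m-1)>0$, then $\hat{\varepsilon}_m(Z_{0,m})=p_{\lambda_0-1}(Z_{0,m})$, which equals $Z_{0,m-1}$ by the last assertion of Lemma \ref{case1}; while if $\lambda_0\leq 0$, then $\hat{\varepsilon}_m(Z_{0,m})=\mu_{y_{m-1}^{-\lambda_0}}(\widetilde{Z_{0,m}})$, which equals $Z_{0,m-1}$ by Lemma \ref{case2}. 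Since for each $m$ exactly one of these two branches applies, in every case one obtains the recursion $\hat{\varepsilon}_m(Z_{0,m})=Z_{0,m-1}$.

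Granting this recursion, Definition \ref{SVVdfn2} would give
$$
\Tr^{\text{SVV}}(Z_{0,n})=\hat{\varepsilon}_1\circ\hat{\varepsilon}_2\circ\cdots\circ\hat{\varepsilon}_n(Z_{0,n})=\hat{\varepsilon}_1(Z_{0,1}),
$$
so it would only remain to handle the base case $m=1$. Since $\Sym_1$ is trivial one has $Z_{0,1}=\psi_{w_{0,1}}y_1^{\ell-1}=y_1^{\ell-1}$, and for $m=1$ the relevant parameter is $\lambda_0=\ell-2(1-1)=\ell>0$, so $\hat{\varepsilon}_1(z)=p_{\ell-1}(z)$. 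For $m=1$ the term $\mu_{\psi_{m-1}}(\pi(z))$ in Definition \ref{ptilde} is vacuous, so the decomposition there reads $z=\sum_{k=0}^{\ell-1}p_k(z)y_1^k$; that is, it is simply the expansion of $z\in\HH_{\ell,1}^{(0)}=K[y_1]/(y_1^\ell)$ in the basis $\{1,y_1,\dots,y_1^{\ell-1}\}$. Hence $p_{\ell-1}(y_1^{\ell-1})=1$, and therefore $\Tr^{\text{SVV}}(Z_{0,n})=\hat{\varepsilon}_1(Z_{0,1})=1$, as desired.

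Essentially all the work has already been done in Lemmas \ref{case1} and \ref{case2}; the only points that still require care in the write-up are the observation that the hypotheses of the two lemmas exactly match the two branches of Definition \ref{SVVdfn1}, so that between them they compute $\hat{\varepsilon}_m$ on $Z_{0,m}$ for all $m$, and the elementary remark that for $m=1$ the operator $p_{\ell-1}$ reduces to extracting the coefficient of $y_1^{\ell-1}$. I do not expect any genuine obstacle here, so the write-up should be very short.
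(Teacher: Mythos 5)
Your proposal is correct and is exactly the argument the paper has in mind: the paper's one-line proof ``follows from Definitions \ref{SVVdfn1}, \ref{SVVdfn2}, Lemmas \ref{case1}, \ref{case2} and an induction on $n$'' is precisely the recursion $\hat{\varepsilon}_m(Z_{0,m})=Z_{0,m-1}$ you extract from the two lemmas, together with the base case $\hat{\varepsilon}_1(y_1^{\ell-1})=p_{\ell-1}(y_1^{\ell-1})=1$. Your explicit handling of which branch of Definition \ref{SVVdfn1} applies (noting that the overlap at $\lambda_0=0$ is harmless since both lemmas cover it and Definition \ref{SVVdfn1} uses the $\leq 0$ branch there) and of the degenerate $m=1$ decomposition in Definition \ref{ptilde} fills in the details the paper leaves to the reader.
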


\begin{proof} This follows from Definition \ref{SVVdfn1}, Definition \ref{SVVdfn2}, Lemma \ref{case1}, Lemma \ref{case2} and an induction on $n$.
\end{proof}

\begin{thm} \label{mainthm3} The two symmetrizing forms $\Tr^{\text{SVV}}$ and $\Tr$ on the cyclotomic nilHecke algebra $\HH_{\ell,n}^{(0)}$ coincide with each other.
\end{thm}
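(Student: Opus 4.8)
The plan is to show that both $\Tr^{\text{SVV}}$ and $\Tr$ are homogeneous symmetrizing forms of the same degree $-d_\Lam$ on $\HH_{\ell,n}^{(0)}$, and that they agree on a single well-chosen element; since the space of homogeneous symmetrizing forms of a fixed degree on a symmetric algebra is one-dimensional, this will force them to coincide. Concretely, $\HH_{\ell,n}^{(0)}$ is a graded symmetric algebra, so any two homogeneous symmetrizing forms of degree $-d_\Lam$ differ by a nonzero scalar in $K$; hence it suffices to exhibit one element on which both forms take the same nonzero value.

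First I would record that $\Tr^{\text{SVV}}$ is homogeneous of degree $-d_\Lam$: this is part of the Shan--Varagnolo--Vasserot construction (each $\hat\varepsilon_n$ lowers degree by the appropriate amount, and $d_{\ell\Lam_0,n\alpha_0}=2\ell n-2n^2=d_\Lam$ by \eqref{lambda0} and the definition of $d_\Lam$), while $\Tr$ is homogeneous of degree $-d_\Lam$ by Proposition~\ref{keyprop1}. Next I would identify the test element as $Z_{0,n}=\psi_{w_{0,n}}y_1^{\ell-1}y_2^{\ell-2}\cdots y_n^{\ell-n}$ from Definition~\ref{z0n}. On the $\Tr$ side, observe that $Z_{0,n}=\psi_{w_0}^\ast y_{\blam_{\maxim}}$ up to the identification $\psi_{w_0}=\psi_{w_0}^\ast$ (recall $w_0=w_0^{-1}$ and $y_{\blam_{\maxim}}=y_1^{\ell-1}\cdots y_n^{\ell-n}$ since $\theta(\blam_{\maxim})=(1,2,\dots,n)$), so in the notation of \eqref{HMcellular} we have $Z_{0,n}=\psi_{w_0,1}^{\blam_{\maxim}}$. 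Here $w=w_0$, $u=1$, so $wu^{-1}=w_0$ and $\blam=\blam_{\maxim}$, and Definition~\ref{DefTr} gives $\Tr(Z_{0,n})=1$. On the $\Tr^{\text{SVV}}$ side, Corollary~\ref{SVVtrace} already establishes $\Tr^{\text{SVV}}(Z_{0,n})=1$.

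Therefore $\Tr$ and $\Tr^{\text{SVV}}$ are two homogeneous symmetrizing forms of degree $-d_\Lam$ agreeing on $Z_{0,n}$ with common value $1\neq 0$. Since they differ by a nonzero scalar $c\in K^\times$, evaluating on $Z_{0,n}$ forces $c=1$, hence $\Tr=\Tr^{\text{SVV}}$.

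The main obstacle is purely bookkeeping: one must be certain that $\Tr^{\text{SVV}}$ really is \emph{symmetric} (not merely a nonzero homogeneous linear functional) and \emph{homogeneous of degree exactly} $-d_\Lam$, so that the one-dimensionality argument applies; both facts are due to \cite{SVV} but should be cited precisely. A secondary point requiring care is the claim that the space of homogeneous symmetrizing forms of a given degree on a graded symmetric algebra is one-dimensional — this follows because a symmetrizing form induces a bimodule isomorphism $\HH_{\ell,n}^{(0)}\cong(\HH_{\ell,n}^{(0)})^\ast\langle d_\Lam\rangle$, and $\Hom$ of the regular bimodule with itself is $Z(\HH_{\ell,n}^{(0)})$; the homogeneous degree-$0$ central elements form a one-dimensional space by Theorem~\ref{mainthm2}, so the homogeneous symmetrizing forms of degree $-d_\Lam$ form a single $K^\times$-orbit (together with $0$). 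Once these standard facts are in place, the proof is immediate.
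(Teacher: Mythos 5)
Your proof is correct, and it takes a genuinely different (and more conceptual) route than the paper's. The paper proves equality by verifying it on the spanning set $\{\wf_{w_i,w_j}z : 1\le i,j\le n!,\ z\in Z\}$ furnished by Proposition~\ref{matrixIso}: for $i\neq j$ both forms vanish by the trace property and orthogonality of the $\wf_{w_i,w_i}$; for $i=j$ with $\deg z<d_\Lam$ both vanish by degree; and for $i=j$, $z=z_{\blam_\maxim}$ both reduce (via (\ref{leadingterm2}) and Lemma~\ref{keylem2}) to the single evaluation $\Tr^{\text{SVV}}(Z_{0,n})=\Tr(Z_{0,n})=1$. You instead invoke the uniqueness, up to nonzero scalar, of a homogeneous symmetrizing form of a fixed degree: two such forms of degree $-d_\Lam$ give graded bimodule isomorphisms $\HH_{\ell,n}^{(0)}\to(\HH_{\ell,n}^{(0)})^\ast$ with the same shift, their composite is a degree-zero bimodule automorphism, hence multiplication by a degree-zero central unit, and by Theorem~\ref{mainthm2} the degree-zero part of $Z$ is $K\cdot 1$; this collapses the whole verification to one evaluation. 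What your approach buys is that the only real input beyond the general theory is $\Tr^{\text{SVV}}(Z_{0,n})=1$ (Corollary~\ref{SVVtrace}) and the identification $Z_{0,n}=\psi_{w_0,1}^{\blam_\maxim}$, so $\Tr(Z_{0,n})=1$ by Definition~\ref{DefTr}; what the paper's approach buys is that it makes no appeal to any abstract uniqueness statement and directly exhibits the agreement on a basis-like set. As you note, the one genuine hypothesis you need from \cite{SVV} is that $\Tr^{\text{SVV}}$ is not merely a nonzero linear form but a \emph{symmetric} form homogeneous of degree exactly $-d_{\ell\Lam_0,n\alpha_0}=2\ell n-2n^2=-d_\Lam$; that is part of their construction and should be cited at that point.
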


\begin{proof} Let $1\leq i,j\leq n!$ and $z\in Z$. Suppose that $i\neq j$. Then as $\Tr^{\text{SVV}}$ is a symmetrizing form and $z$ is central, we have that $$
\begin{aligned}
\Tr^{\text{SVV}}(\wf_{w_i,w_j}z)&=\Tr^{\text{SVV}}(\wf_{w_i,w_i}\wf_{w_i,w_j}z)=\Tr^{\text{SVV}}(\wf_{w_i,w_j}z\wf_{w_i,w_i})=
\Tr^{\text{SVV}}(\wf_{w_i,w_j}\wf_{w_i,w_i}z)\\
&=\Tr^{\text{SVV}}(0z)=0 .
\end{aligned}$$
It remains to consider the case when $i=j$.

If $\deg z<d_\Lam$, then as $\Tr^{\text{SVV}}$ is homogeneous of degree $-d_\Lam$ and $\deg\wf_{w_i,w_i}=0$, we have that $\Tr^{\text{SVV}}(\wf_{w_i,w_i}z)=0$. Therefore, without loss of generality, we can assume that $z=z_{\blam_\maxim}$. Our purpose is to compare $\Tr^{\text{SVV}}(\wf_{w_i,w_i}z_{\blam_\maxim})$ and
$\Tr(\wf_{w_i,w_i}z_{\blam_\maxim})$.

Note that for any $\bmu\in\P_0$ with $\bmu>\blam_{\min}$, we have that $$
\deg(y_\bmu z_{\blam_{\max}})>n(n-1)+2n(\ell-n)=2\ell n-n(n+1)=\deg(y_1^{\ell-1}y_2^{\ell-2}\cdots y_n^{\ell-n}),
$$
which implies that $y_\bmu z_{\blam_{\max}}=0$ by Theorem \ref{mainthm1}. By (\ref{leadingterm2}) and Lemma \ref{keylem2}, we have that $$\begin{aligned}
\Tr^{\text{SVV}}(\wf_{w_i,w_i}z_{\blam_\maxim})&=(-1)^{n(n-1)/2}\Tr^{\text{SVV}}(\psi_{w_0w_i,w_i}^{\blam_\minum}z_{\blam_\maxim})=
\Tr^{\text{SVV}}(\psi_{w_0w_i,w_i}^{\blam_\maxim})\\
&=\Tr^{\text{SVV}}(\psi_{w_i}\psi_{w_0w_i}^{\ast}y_1^{\ell-1}y_2^{\ell-2}\cdots y_n^{\ell-n})\\
&=\Tr^{\text{SVV}}(\psi_{w_0}y_1^{\ell-1}y_2^{\ell-2}\cdots y_n^{\ell-n})\\
&=\Tr^{\text{SVV}}(Z_{0,n})=1, \quad\text{(by Lemma \ref{SVVtrace})}\\
\Tr(\wf_{w_i,w_i}z_{\blam_\maxim})&=(-1)^{n(n-1)/2}\Tr(\psi_{w_0w_i,w_i}^{\blam_\minum}z_{\blam_\maxim})=
\Tr(\psi_{w_0w_i,w_i}^{\blam_\maxim})\\
&=\Tr(\psi_{w_i}\psi_{w_0w_i}^{\ast}y_1^{\ell-1}y_2^{\ell-2}\cdots y_n^{\ell-n})\\
&=\Tr(\psi_{w_0}y_1^{\ell-1}y_2^{\ell-2}\cdots y_n^{\ell-n})\\
&=1 .  \quad\text{(by (\ref{maxvalue}))}
\end{aligned}
$$
This shows that $\Tr^{\text{SVV}}(\wf_{w_i,w_i}z_{\blam_\maxim})=\Tr(\wf_{w_i,w_i}z_{\blam_\maxim})$.

As a result, we have shown that $\Tr^{\text{SVV}}(\wf_{w_i,w_j}z)=\Tr(\wf_{w_i,w_j}z)$ for any $1\leq i,j\leq n!$ and $z\in Z$. It follows that
$\Tr^{\text{SVV}}=\Tr$ as required.
\end{proof}

\bigskip

\end{document}